\theoremstyle{plain}
\newtheorem{theorem}{Theorem}[section]
\newtheorem{lemma}[theorem]{Lemma}
\newtheorem{proposition}[theorem]{Proposition}
\theoremstyle{definition}
\newtheorem{definition}[theorem]{Definition}
\theoremstyle{remark}
\newtheorem*{remark*}{Remark}
\numberwithin{equation}{section}
\newcommand{\powbra}[1]{^{(#1)}}
\newcommand{\R}{\mathbb{R}}
\newcommand{\Z}{\mathbb{Z}}
\newcommand{\T}{\mathbb{T}}
\newcommand{\Sph}{\mathbb{S}}
\newcommand{\set}[1]{\left\lbrace#1\right\rbrace}
\newcommand{\inprod}[2]{\left\langle#1,#2\right\rangle}
\newcommand{\inv}{^{-1}}
\newcommand{\subspaceleq}[1]{\underset{#1}{<}}
\newcommand{\myitem}[1]{%
\item[#1]\protected@edef\@currentlabel{#1}%
}
\begin{document}

\title[]
      {Locally maximizing orbits for multi-dimensional twist maps and Birkhoff billiards}

\date{}
\author{Misha Bialy}
\address{School of Mathematical Sciences, Raymond and Beverly Sackler Faculty of Exact Sciences, Tel Aviv University,
Israel} 
\email{bialy@tauex.tau.ac.il}
\thanks{MB is partially supported by ISF grant 580/20,  DT is supported by ISF grants 580/20, 667/18 and DFG grant MA-2565/7-1  within the Middle East Collaboration Program.}

\author{Daniel Tsodikovich}
\address{School of Mathematical Sciences, Raymond and Beverly Sackler Faculty of Exact Sciences, Tel Aviv University,
	Israel}
\email{tsodikovich@tauex.tau.ac.il}
%\thanks{}

%\subjclass[2000]{} 
%\keywords{}

\begin{abstract}
In this work, we consider the variational properties of exact symplectic twist maps $T$ that act on the cotangent bundle of a torus or on a ball bundle over a sphere.
An example of such a map is the well-known Birkhoff billiard map corresponding to smooth convex hypersurfaces.
 In this work, we will focus on the important class $\mathcal{M}$ of orbits of $T$ which are locally maximizing with respect to the variational principle associated with a generating function of the  symplectic twist map.
 Our first goal is to give a geometric and variational characterization for the orbits in the class $\mathcal M$.

The billiard map is known to have two different generating functions, and this invokes the following question: how to compare the properties of these two generating functions? 
While our motivation comes from billiards, we will work in general, and assume that a general twist map $T$ has two generating functions.
Thus, we consider the orbits of $T$ which are locally maximizing with respect to either of the generating functions.
We formulate a geometric criterion guaranteeing that two generating functions of the same twist map have the same class of locally maximizing orbits, and we will show that the two generating functions for the Birkhoff billiard map do, in fact, satisfy this criterion.
The proof of this last property will rely on the Sinai-Chernov formula from geometric optics and billiard dynamics.
\end{abstract}

\maketitle

%%%%%%%%%%%%%%%%%%%%%%%%%%%%%%%%%%%%%%%%%%%%%%%%%%%%%%%%%%%%%%%%%%%%%%%%%%
\section{Introduction and the results}\label{section:intro}

Symplectic twist maps of cotangent bundles often arise in mathematics and physics as maps describing conservative dynamical systems, and hence they are studied extensively. 
An exact symplectic twist map $T$ is one that has a generating function, $H$ (for the exact definitions, see Subsection \ref{subsection:twistmapintro}).
This allows studying the twist map using a variational approach: 
one can study the extremals of the formal functional \cite {AubryS1983TdFm, MatherJohnN1991Vcoo, Bangert1988MatherSF, MacKayR.S1985CKTa, MacKayRS1989CKtf}
\[\sum_{n=-\infty}^\infty H(q_n,q_{n+1}).\]

While this infinite sum may not be well-defined, its derivatives with respect to every $q_n$ are well defined.
Our main motivation is the Birkhoff billiard map in smooth convex hypersurfaces of the Euclidean space.
In this system, a particle moves in a straight line until it hits the boundary of the domain, at which point it switches direction according to the classical law of geometric optics: The angle of incidence is equal to the angle of reflection, and the incoming ray, outgoing ray, and the normal are all co-planar.
 See Figure \ref{fig: birkBilliard}.
   \begin{figure}
        \centering
       \begin{tikzpicture}[scale = 2]
   \draw[domain = -180:180, smooth, variable = \t, black, line width = 1.5pt] plot({cos(\t)},{sin(\t)});
   \tkzDefPoint(1,0){A};
   \tkzDefPoint(0,1){B};
   \tkzDrawPoints(A,B);
   \tkzDrawSegment[blue, line width = 1.5pt](A,B);
   \draw[dashed, line width = 1pt](1,-0.7)--(1,0.7);
   \draw[dashed, line width = 1pt](-0.7,1)--(0.7,1);
   \draw[blue](0,1)--(-0.3,0.7);
   \node[magenta] at (0.8, 0.4) {\footnotesize{$\alpha$}};
   \node[olive] at (0.4,0.8) {\footnotesize{$\alpha'$}};
   \node[olive] at (-0.4,0.8) {\footnotesize{$\alpha'$}};
       \end{tikzpicture}
       \caption{Birkhoff billiard dynamical system.\label{fig: birkBilliard}}
    \end{figure}
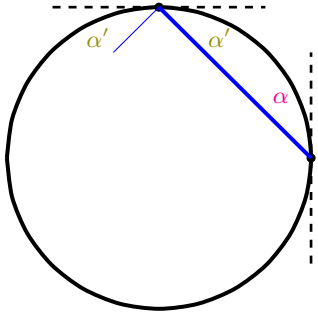

Variational study of the billiard dynamics in higher dimensions is an active field of research.
The behavior of the high-dimensional billiard system seems to be more flexible and varied compared to the two-dimensional system.
See \cite{1930-5311_2009_1_51} and \cite{TuraevClarke2020} for some results on the variational properties of billiards in higher dimensions, and see \cite{arnaud2010green} for a study of the variational properties of twist maps in higher dimensions.

The billiard map can be described using two different natural generating functions that correspond to two natural sets of symplectic coordinates on the space of oriented lines (see Subsection \ref{subsection:generatingFunctionsBilliards}):
 The standard generating function---the length of the chord, used by Birkhoff and others--- and the non-standard generating function (which is related to the support function), used recently in \cite{BialyMisha2020DRia, BialyMisha2017Abaa, BialyMisha2018Gbti}.
In light of this, a natural question arises: How to compare the variational properties of the two different variational principles?
While the billiard map is our main motivation, we will develop the theory for twist maps in general.
We will study the locally maximizing orbits of twist maps, which we call \textit{m-orbits}:
they correspond to the configurations $\set{q_n}$ for which every finite subsegment $\set{q_n}_{n=N}^M$ is a local maximum of the truncated action functional,
\[F_{M,N}(x_{M},...,x_{N})=H(q_{M-1},x_{M})+\sum_{i=M}^{N-1}H(x_{i},x_{i+1})+H(x_{N},q_{N+1}).\]
For more details on m-orbits, see Subsection \ref{subsubsection:morbits} below. The set $\mathcal M$, which is the set that consists of all
m-orbits, is a remarkable closed invariant set of a system.
 In particular, $\mathcal M$ contains all the invariant Lagrangian graphs, see \cite{MacKayRS1989CKtf,SEDP_1987-1988____A14_0} and Proposition \ref{prop:elementaryProperties} below.
This class is also useful when studying the rigidity of this system.
\begin{remark*}
	Traditionally, the standard generating function for billiards is the negative chord length.
	 However, we shall prefer, for convenience, the sign $+$ for the generating function and hence we use the twist condition with the opposite sign for the billiard map (that is, our twist is negative).
	  Consequently, we shall deal with maximizing (and not minimizing) orbits.
	   The sign of the twist condition in higher dimensions is not completely clear, see more in Subsection \ref{subsubsection:signtwist}.
\end{remark*} 

Our results are the following.
First, we summarize the ideas and methods of \cite{Bialy1993, Michael2012Hrfc, MacKayR.S1985CKTa} and we state the following criterion in terms of the Jacobi field.
One direction was shown in \cite{BialyML2004Stmw}, the converse is, to our knowledge, new (for the definition of matrix Jacobi field, see Subsection \ref{subsection:criteriontori}):
\begin{theorem}\label{thm:criterionMinMax}
Let $T$ be an exact symplectic twist map of the cotangent bundle of a torus.
The configuration $\set{q_n}$ is an m-configuration if and only if there exists a matrix Jacobi field $J_n$, for $n\in\Z$, of non-singular matrices, such that all the matrices 
\[X_n=-H_{12}(q_n,q_{n+1})J_{n+1}J_n\inv,\]
are symmetric negative definite.
\end{theorem}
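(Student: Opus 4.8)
The plan is to translate the maximality of every finite truncation $F_{M,N}$ into the statement that a discrete matrix Riccati recursion has only negative definite terms, and then to recognize the matrices $X_n$ as the particular Riccati solution produced by the Jacobi field $J_n$. Write $A_n=H_{12}(q_n,q_{n+1})$, invertible by the twist condition, and $B_n=H_{22}(q_{n-1},q_n)+H_{11}(q_n,q_{n+1})$, symmetric; then the Hessian of $F_{M,N}$ at $(q_M,\dots,q_N)$ is the block-tridiagonal quadratic form $Q_{M,N}(\xi)=\sum_{n=M}^{N}\xi_n^{T}B_n\xi_n+2\sum_{n=M}^{N-1}\xi_n^{T}A_n\xi_{n+1}$, i.e. the restriction of the bi-infinite second variation to sequences supported on $[M,N]$. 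The first step is to record that $\{q_n\}$ is an m-configuration if and only if $Q_{M,N}$ is negative definite for all $M\le N$: maximality gives $Q_{M,N}\le 0$, and a degenerate truncated maximum cannot survive enlargement of the segment, by the discrete conjugate-point arguments for twist maps in \cite{Bialy1993, MacKayR.S1985CKTa, BialyML2004Stmw}, so one in fact gets strict negative definiteness.

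The linear-algebraic heart of the matter is the following. Eliminating the variables of $Q_{M,N}$ block by block starting from index $M$ produces the Schur complements $\Sigma_M^{(M)}=B_M$ and $\Sigma_{n+1}^{(M)}=B_{n+1}-A_n^{T}(\Sigma_n^{(M)})^{-1}A_n$, and $Q_{M,N}$ is negative definite precisely when every $\Sigma_n^{(M)}$, $M\le n\le N$, is defined and negative definite. On the other hand, a direct manipulation of the Jacobi equation $A_{n-1}^{T}J_{n-1}+B_nJ_n+A_nJ_{n+1}=0$ shows that, wherever $J_n$ and $J_{n+1}$ are nonsingular, the matrix $X_n=-A_nJ_{n+1}J_n^{-1}$ satisfies the very same recursion $X_{n+1}=B_{n+1}-A_n^{T}X_n^{-1}A_n$; moreover $X_n$, and the non-singularity of the $J_n$, are unchanged under $J_n\mapsto J_nC$ for a constant invertible $C$. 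Applied to the Jacobi field $J^{(M)}$ normalized by $J_{M-1}^{(M)}=0$, $J_M^{(M)}=I$, the Jacobi equation at index $M$ forces $X_M^{(M)}=B_M=\Sigma_M^{(M)}$, and then by induction $J_n^{(M)}$ is invertible and $X_n^{(M)}=\Sigma_n^{(M)}$ for all $n\ge M$, provided the truncations are non-degenerate.

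This disposes of the direction $(\Leftarrow)$: if $J$ is a global nonsingular Jacobi field with all $X_n$ negative definite, then at index $M$ we have $\Sigma_M^{(M)}=B_M<X_M$ (because $X_M=B_M-A_{M-1}^{T}X_{M-1}^{-1}A_{M-1}$ and $-A_{M-1}^{T}X_{M-1}^{-1}A_{M-1}>0$), and since $Y\mapsto B_{n+1}-A_n^{T}Y^{-1}A_n$ is order preserving on negative definite matrices, $\Sigma_n^{(M)}\le X_n<0$ for all $n\ge M$; hence every $Q_{M,N}$ is negative definite and $\{q_n\}$ is an m-configuration. For the direction $(\Rightarrow)$ one must manufacture a single bi-infinite nonsingular Jacobi field out of the one-sided fields $J^{(M)}$ as $M\to-\infty$. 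Renormalize so that $J_0^{(M)}=I$. The crucial uniform estimate is that, for each fixed $n$, the negative definite matrices $X_n^{(M)}$ (defined whenever $M\le n$) stay in a compact subset of $GL_d(\R)$ independent of $M$: from $X_{n+1}^{(M)}<0$ one reads off $B_{n+1}<A_n^{T}(X_n^{(M)})^{-1}A_n<0$, which confines $(X_n^{(M)})^{-1}$, hence $X_n^{(M)}$, to a fixed compact set of negative definite matrices bounded away from $0$. Consequently each ratio $J_{n+1}^{(M)}(J_n^{(M)})^{-1}=-A_n^{-1}X_n^{(M)}$ and its inverse are uniformly bounded, so $J_n^{(M)}$ telescopes, from the normalization at $0$, to stay in a fixed compact subset of $GL_d(\R)$ for every $n$. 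A diagonal extraction $M_k\to-\infty$ then yields a bi-infinite $J^{(\infty)}$ with $J_n^{(\infty)}\in GL_d(\R)$ solving the Jacobi equation, and $X_n^{(\infty)}=\lim_k X_n^{(M_k)}$ negative definite — strictly, since the estimate above forces $X_n^{(M)}\le -c_nI$ for a constant $c_n>0$ independent of $M$, a bound that passes to the limit.

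The main obstacle is exactly this last construction: passing from the family of truncated one-sided Jacobi fields to one global one is a Weyl-type limiting procedure, and one must ensure that non-singularity of the $J_n$ and strict (not merely weak) negative definiteness of the $X_n$ both survive; the uniform Riccati estimates, which use only invertibility of the $A_n$, are what make this possible. A lesser but genuine point is the equivalence in the first paragraph between being an m-configuration and strict negative definiteness of all $Q_{M,N}$, i.e. ruling out degenerate truncated maxima, which is where the discrete conjugate-point theory for twist maps enters.
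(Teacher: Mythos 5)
Your $(\Leftarrow)$ argument and the identification of the Schur complements $\Sigma^{(M)}_n$ with the Riccati iterates $X^{(M)}_n$ is essentially the same bookkeeping the paper carries out (the paper's Lemma~\ref{lem:JacobiFieldLemma} together with the ``completing the square'' display at the end of the proof), and it is sound. The structural departure, and the place a gap appears, is in the $(\Rightarrow)$ direction: you replace the paper's monotonicity of $k\mapsto A_n^{(k)}$ by a compactness/diagonal-extraction scheme, and the compactness estimate as written does not hold.

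Concretely, you derive $B_{n+1}<A_n^{T}(X_n^{(M)})^{-1}A_n<0$ and claim this ``confines $(X_n^{(M)})^{-1}$, hence $X_n^{(M)}$, to a fixed compact set of negative definite matrices bounded away from $0$.'' The order interval $A_n^{-T}B_{n+1}A_n^{-1}<(X_n^{(M)})^{-1}<0$ does \emph{not} keep $(X_n^{(M)})^{-1}$ away from the singular boundary; inverting the one-sided bound only gives the upper estimate $X_n^{(M)}<A_n^{T}B_{n+1}^{-1}A_n<0$, and no lower bound on $X_n^{(M)}$. A priori the $X_n^{(M)}$ could diverge to $-\infty$ as $M\to-\infty$, so neither $X_n^{(M)}$ nor the ratios $J_{n+1}^{(M)}(J_n^{(M)})^{-1}$ are shown to stay in a compact subset of $GL_d(\R)$, and the diagonal extraction is not justified as written. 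The missing ingredient is the lower bound $X_n^{(M)}>B_n$, which follows from the Riccati step $X_n^{(M)}=B_n-A_{n-1}^{T}(X_{n-1}^{(M)})^{-1}A_{n-1}$ and which you actually observe and use in the $(\Leftarrow)$ paragraph but do not invoke here. With $B_n<X_n^{(M)}<A_n^{T}B_{n+1}^{-1}A_n<0$ the argument closes. The paper avoids the extraction entirely by proving that for fixed $n$ the family $A_n^{(k)}$ is monotone increasing as $k\to-\infty$ (via $A_k^{(k)}-A_k^{(k-1)}=b_{k-1}^{T}(A_{k-1}^{(k-1)})^{-1}b_{k-1}<0$ and monotonicity of the Riccati map), so that a genuine limit $X_n$ exists and its non-degeneracy is read off the inverse-free form of the recursion; this is cleaner, but once you add the lower bound your compactness route also works.
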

Here and below, subscripts 1,2 denote partial derivatives with respect to the first or second variables, respectively.

We also generalize this theorem for twist maps of ball bundles over spheres, see Subsection \ref{subsubsection:twistballbundle} and Theorem \ref{prop:mOrbitCriterioBallBundle}.
As a result, we can continue to handle twist maps of cotangent bundles of torii and ball bundles over spheres in a unified way.

Next, we give a geometric criterion (which is inspired by the cone condition of \cite {MacKayR.S1985CKTa}) as a consequence  of Theorem \ref{prop:mOrbitCriterioBallBundle}, where we use the natural order on Lagrangian subspaces (for the definition of this order, see Subsection \ref{subsection:LagrangianSubspaces}):
\begin{theorem}\label{thm:morbitInequality}
An orbit $\set{(q_n,p_n)}$ is an m-orbit of a twist map $T$ if and only if there exists a field of Lagrangian subspaces $L_n$ along the orbit such that for all $n\in\Z$, the following three conditions hold:
\begin{enumerate}
\item\label{itm:transversality} $L_n$ is transversal to the vertical subspace at $(q_n,p_n)$, $V_n$.
\item\label{itm:invariance} $L_{n+1}=dT_{(q_n,p_n)}(L_n)$.
\item\label{itm:inequality}$\alpha_n \subspaceleq{V_n} L_n \subspaceleq{V_n}\beta_n$,
\end{enumerate}
where $\alpha_n$, $\beta_n$ are the image and preimage (respectively) of the vertical subspace: $\alpha_n = dT(V_{n-1})$, $\beta_n=dT\inv(V_{n+1})$, and $\subspaceleq{V_n}$ denotes the partial order on Lagrangian subspaces transversal to $V_n$ (see Subsection \ref{subsection:LagrangianSubspaces}).
\end{theorem}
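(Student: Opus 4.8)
The plan is to deduce Theorem \ref{thm:morbitInequality} from the Jacobi-field criterion of Theorem \ref{prop:mOrbitCriterioBallBundle} by establishing a dictionary between matrix Jacobi fields of nonsingular matrices and $dT$-invariant fields of Lagrangian subspaces transversal to the verticals. I will work in a local symplectic chart $(q,p)$ around each point of the orbit in which $T$ is generated by $H$, so that $p_n=-H_1(q_n,q_{n+1})$, $p_{n+1}=H_2(q_n,q_{n+1})$, and $V_n=\{(0,\eta)\}$. A Lagrangian subspace transversal to $V_n$ is the graph $\{(\xi,S\xi)\}$ of a symmetric matrix $S$, its \emph{slope}, and, with the convention fixed in Subsection \ref{subsection:LagrangianSubspaces}, the order $\subspaceleq{V_n}$ is comparison of slopes. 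Linearizing these relations shows at once that $\alpha_n=dT(V_{n-1})$ has slope $H_{22}(q_{n-1},q_n)$ and $\beta_n=dT\inv(V_{n+1})$ has slope $-H_{11}(q_n,q_{n+1})$ (here one uses invertibility of $H_{12}$, the twist condition).

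To a matrix Jacobi field $J_n$ I associate the vector $Y_n:=-H_{11}(q_n,q_{n+1})J_n-H_{12}(q_n,q_{n+1})J_{n+1}$ forced by $p_n=-H_1$, and the subspace $L_n:=\{(J_n\xi,Y_n\xi):\xi\in\R^d\}$, and I check four facts. (a) $L_n$ is transversal to $V_n$ iff $J_n$ is nonsingular. (b) Computing $dT$ on $(J_n\xi,Y_n\xi)$ gives first coordinate $J_{n+1}\xi$ automatically, and second coordinate $Y_{n+1}\xi$ exactly when $J_n$ solves the discrete Jacobi equation $H_{21}(q_{n-1},q_n)J_{n-1}+\big(H_{22}(q_{n-1},q_n)+H_{11}(q_n,q_{n+1})\big)J_n+H_{12}(q_n,q_{n+1})J_{n+1}=0$; hence, for nonsingular frames, condition \ref{itm:invariance} holds iff $J_n$ is a Jacobi field. (c) $L_n$ is Lagrangian iff $J_n^{\top}H_{12}(q_n,q_{n+1})J_{n+1}$ is symmetric, which a one-line manipulation identifies with symmetry of $X_n=-H_{12}(q_n,q_{n+1})J_{n+1}J_n\inv$. (d) The slope of $L_n$ is $S_n=X_n-H_{11}(q_n,q_{n+1})$, i.e.\ $X_n$ is the slope of $L_n$ minus the slope of $\beta_n$, so $X_n$ is negative definite exactly when $L_n\subspaceleq{V_n}\beta_n$. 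By Theorem \ref{prop:mOrbitCriterioBallBundle}, then, $\{(q_n,p_n)\}$ is an m-orbit iff there is a field $L_n$ satisfying \ref{itm:transversality}, \ref{itm:invariance}, and $L_n\subspaceleq{V_n}\beta_n$ for all $n$; in the backward direction one recovers the Jacobi field by fixing $n_0$, parametrizing $L_{n_0}$ by an invertible matrix, and propagating via $(J_{n+1},Y_{n+1})=dT(J_n,Y_n)$, condition \ref{itm:inequality} supplying the invertibility needed at each step.

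It remains to reconcile this with the two-sided inequality \ref{itm:inequality}. Set $\widetilde X_n:=S_n-H_{22}(q_{n-1},q_n)$, the slope of $L_n$ minus the slope of $\alpha_n$, so $\alpha_n\subspaceleq{V_n}L_n$ iff $\widetilde X_n>0$. Multiplying the discrete Jacobi equation on the right by $J_n\inv$ (equivalently, using the Riccati recursion for $S_n$ that \ref{itm:invariance} imposes) gives the identity
\[\widetilde X_n=H_{12}(q_{n-1},q_n)^{\top}\,(-X_{n-1})\inv\,H_{12}(q_{n-1},q_n),\]
a congruence by the invertible matrix $H_{12}(q_{n-1},q_n)$; hence $\widetilde X_n>0$ iff $X_{n-1}<0$. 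Since \ref{itm:inequality} is required for every $n\in\Z$, the condition ``$\alpha_n\subspaceleq{V_n}L_n$ for all $n$'' is equivalent to ``$L_n\subspaceleq{V_n}\beta_n$ for all $n$'', and combining with the previous paragraph proves the theorem.

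The main difficulty is the bookkeeping of signs: matching the orientation of $\subspaceleq{V_n}$ and the paper's negative-twist convention with the negative-definiteness of $X_n$, and running the correspondence intrinsically on a ball bundle over a sphere, where the chart $(q,p)$ exists only locally. The latter is handled by observing that $V_n$, $dT$, and therefore $\alpha_n$, $\beta_n$, the subspaces $L_n$ and the order $\subspaceleq{V_n}$ are all chart-independent, while only the auxiliary objects $J_n$, $Y_n$, $S_n$, $X_n$ are computed chart by chart; facts (a)--(d) are local identities and so are unaffected by the choice of chart.
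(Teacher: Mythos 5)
Your proposal is correct and takes essentially the same route as the paper. The paper likewise translates between the Jacobi-field criterion of Theorem \ref{prop:mOrbitCriterioBallBundle} and invariant Lagrangian fields $L_n$ transversal to $V_n$, identifies $\alpha_n$, $\beta_n$ with the graphs of $H_{22}(q_{n-1},q_n)$ and $-H_{11}(q_n,q_{n+1})$, defines $L_n$ as the graph of $W_n=-H_{11}+X_n$, packages your step (b) as Lemma \ref{lem:recursionAndInvariance}, and obtains $\alpha_n\subspaceleq{V_n}L_n$ from $W_n=H_{22}-b_{n-1}^*X_{n-1}^{-1}b_{n-1}$, which is exactly your congruence identity for $\widetilde X_n$.
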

Suppose that $T$ is an exact symplectic twist map with respect to two generating functions, denoted by $H^{(1)}$ and $H^{(2)}$.
Write $\mathcal{M}_{H^{(1)}}$ for the set that consists of all m-orbits of $H^{(1)}$, and $\mathcal{M}_{H^{(2)}}$ for the set that consists of all m-orbits of $H^{(2)}$.
Denote by $V^{H\powbra{i}}$ the vertical subspace of the symplectic coordinates for $H\powbra{i}$, and by $\alpha^{H\powbra{i}}$, $\beta^{H\powbra{i}}$ the image and preimage of $V^{H\powbra{i}}$ by $dT$.
We now formulate the Geometric Assumption \ref{GeometricAssumption} about the symplectic coordinates corresponding to the generating functions $H^{(1)}$ and $H^{(2)}$.
 This condition roughly says that the two sets of coordinates are not too far apart.
 \begin{itemize}
\myitem{(GA)}\label{FakeGeometricAssumption} Suppose that at every point of $\mathcal{M}_{H\powbra{1}}\cup\mathcal{M}_{H\powbra{2}}$ 
%an m-orbit of $T$ (with respect to either of the generating functions)
 there exists a homotopy of Lagrangian subspaces $\set{V_t}$ connecting $V^{H\powbra{1}}$ and $V^{H\powbra{2}}$, such that for all $t$, the subspace $V_t$ is transversal to all four subspaces $\alpha^{H\powbra{1}}$, $\beta^{H\powbra{1}}$, $\alpha^{H\powbra{2}}$, $\beta ^{H\powbra{2}}$.

\end{itemize} 
% (here $V^S$ and $V^H$ denote the vertical subspaces of the corresponding symplectic coordinates, and $\alpha^S = dT\inv(V^S)$, $\beta^S = dT(V^S)$, $\alpha^L = dT\inv(V^L)$, $\beta^L = dT(V^L)$):
%\begin{itemize}
%\myitem{(GA)}\label{GeometricAssumption} Suppose that there exists a homotopy of Lagrangian subspaces $\set{V_t}$ connecting $V^S$ and $V^H$, such that for all $t$, the subspace $V_t$ is transversal to all four subspaces $\alpha^S$, $\beta^S$, $\alpha^H$, $\beta ^H$.
%
%\end{itemize}

\begin{theorem}\label{thm:geometricAssumptionImpliesMOrbitEq}
Suppose that $H^{(1)}$ and $H^{(2)}$ are two generating functions of an exact symplectic twist map $T$.
If $H^{(1)}$ and $H^{(2)}$ satisfy the Geometric Assumption \ref{GeometricAssumption}, then the sets m-orbits coincide: 
\[\mathcal{M}_{H^{(1)}} = \mathcal{M}_{H^{(2)}}.\]
\end{theorem}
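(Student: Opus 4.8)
The plan is to reduce the statement to the geometric criterion of Theorem~\ref{thm:morbitInequality}. Since the Geometric Assumption~\ref{GeometricAssumption} is symmetric in $H^{(1)}$ and $H^{(2)}$, it suffices to prove the inclusion $\mathcal{M}_{H^{(1)}}\subseteq\mathcal{M}_{H^{(2)}}$. So fix an orbit $\set{(q_n,p_n)}\in\mathcal{M}_{H^{(1)}}$. By Theorem~\ref{thm:morbitInequality}, applied with respect to $H^{(1)}$, there is a field of Lagrangian subspaces $L_n$ along the orbit that is transversal to $V^{H^{(1)}}_n$, satisfies $L_{n+1}=dT(L_n)$, and obeys $\alpha^{H^{(1)}}_n\subspaceleq{V^{H^{(1)}}_n}L_n\subspaceleq{V^{H^{(1)}}_n}\beta^{H^{(1)}}_n$ for every $n$. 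I would then show that the \emph{same} field $L_n$ certifies, again through Theorem~\ref{thm:morbitInequality} but now with respect to $H^{(2)}$, that the orbit lies in $\mathcal{M}_{H^{(2)}}$. The invariance condition $L_{n+1}=dT(L_n)$ holds already, so the whole point is to transfer the transversality to the vertical and the order inequality from the vertical $V^{H^{(1)}}$ to the vertical $V^{H^{(2)}}$.

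For this transfer I would deform along the homotopy $\set{V_t}$ supplied by the Geometric Assumption. Along the orbit write $V^t_n$ for $V_t$ at $(q_n,p_n)$, and set $\alpha^t_n:=dT(V^t_{n-1})$, $\beta^t_n:=dT\inv(V^t_{n+1})$; these paths interpolate between the data of $H^{(1)}$ at $t=0$ and the data of $H^{(2)}$ at $t=1$. Let
\[
G=\set{t\in[0,1]\ :\ L_n\text{ is transversal to }V^t_n\text{ and }\alpha^t_n\subspaceleq{V^t_n}L_n\subspaceleq{V^t_n}\beta^t_n\text{ for all }n}.
\]
Then $0\in G$, and the goal is to prove that $G$ is open and closed, hence equal to $[0,1]$; reading off the conclusion at $t=1$ gives exactly the transversality and the inequality required by Theorem~\ref{thm:morbitInequality} for $H^{(2)}$. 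Two observations drive openness. First, at any $t_0\in G$ the inequalities are automatically strict, i.e. $L_n$ lies in the interior of the order interval determined by $\alpha^{t_0}_n$ and $\beta^{t_0}_n$: otherwise $L_n$ would meet one of the two "walls," say $L_n\cap\alpha^{t_0}_n\neq\set0$, and applying $dT\inv$ would give $L_{n-1}\cap V^{t_0}_{n-1}\neq\set0$, contradicting the transversality that is part of $t_0\in G$ (and symmetrically at the $\beta$-wall, applying $dT$). Second, $L_n$ is never equal to $V^t_n$: by the Geometric Assumption, $V^t_n$ remains in one connected component of the space of Lagrangian subspaces transversal to the four fixed subspaces $\alpha^{H^{(1)}}_n,\beta^{H^{(1)}}_n,\alpha^{H^{(2)}}_n,\beta^{H^{(2)}}_n$, whereas $L_n$ --- lying in the $H^{(1)}$ order interval --- sits on the side of $\set{\alpha^{H^{(1)}}_n,\beta^{H^{(1)}}_n}$ opposite to $V^{H^{(1)}}_n$, hence outside that component. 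Combined with the continuous dependence of order intervals on their defining subspaces, these give openness; a limiting version of the same two facts gives closedness of $G$.

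The step I expect to be the genuine obstacle is keeping the twist-type transversality of $V^t_n$ to $\alpha^t_n$ (equivalently, of $V^t_{n-1}$ to $\beta^t_{n-1}$) valid for \emph{all} $t$, not merely at the endpoints, where it is nothing but the twist condition for $H^{(1)}$ and for $H^{(2)}$: if it fails the order interval degenerates and the deformation cannot proceed. This is precisely where the quantitative content of the Geometric Assumption --- that the two symplectic coordinate systems are not too far apart, in the sense that the homotopy of verticals avoids all of $\alpha^{H^{(1)}},\beta^{H^{(1)}},\alpha^{H^{(2)}},\beta^{H^{(2)}}$ --- has to be converted, by a connectedness argument in the Lagrangian Grassmannian of the kind developed in Subsection~\ref{subsection:LagrangianSubspaces}, into the statement that the moving vertical never collides with the moving image and preimage of the verticals along the orbit. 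Once that is secured, the open-closed argument above closes up, and the theorem follows by symmetry.
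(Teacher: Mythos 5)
Your overall strategy --- fix an m-orbit for $H^{(1)}$, apply Theorem~\ref{thm:morbitInequality} to obtain the invariant Lagrangian field $L_n$, and show the \emph{same} field certifies the orbit as an m-orbit for $H^{(2)}$ --- is correct and matches the paper. But the execution via the set $G$ and an open-closed argument along the homotopy parameter has a genuine gap that you correctly flag at the end but do not resolve, and which I do not believe is resolvable in the form you propose. Your set $G$ requires $\alpha^t_n \subspaceleq{V^t_n} L_n \subspaceleq{V^t_n} \beta^t_n$ with the interpolated subspaces $\alpha^t_n := dT(V^t_{n-1})$, $\beta^t_n := dT^{-1}(V^t_{n+1})$; for the order $\subspaceleq{V^t_n}$ to even be defined, $\alpha^t_n$ and $\beta^t_n$ must be transversal to $V^t_n$. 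That is a ``twist condition for the interpolated coordinates,'' and the Geometric Assumption~\ref{GeometricAssumption} gives no control over it: the assumption only constrains the position of $V_t$ relative to the four \emph{fixed} subspaces $\alpha^{H^{(1)}},\beta^{H^{(1)}},\alpha^{H^{(2)}},\beta^{H^{(2)}}$, and says nothing about $dT(V_t)$ or $dT^{-1}(V_t)$ for intermediate $t$. Separately, $G$ is an intersection over all $n$ of open conditions and so need not be open (nor closed --- the closedness argument you sketch is circular, since ruling out $L_n\cap\alpha^{t^*}_n\neq\set{0}$ at a limit parameter uses transversality of $L_{n-1}$ to $V^{t^*}_{n-1}$, which is itself one of the conditions whose persistence to $t^*$ is at issue); and the paper explicitly notes the Geometric Assumption is a purely pointwise hypothesis, so parametrizing by a single $t$ uniformly in $n$ already assumes more than you are given.

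The paper avoids all of this by never moving $\alpha$ or $\beta$, and by arguing pointwise at each $n$. It works directly with the index form $Q$: Proposition~\ref{prop:orderQuadraticForm} converts the order inequalities into sign conditions on $Q[\alpha^{H^{(1)}},\beta^{H^{(1)}}]$, which is positive on $V^{H^{(1)}}$ and negative on $L$; Lemma~\ref{lem:homotopyTransversalityPersistence} (which uses exactly the transversality the Geometric Assumption does supply) shows positivity persists on $V_t$, giving transversality of $L$ to all $V_t$ at once. Then, invoking the remark after Theorem~\ref{thm:morbitInequality} that it suffices to verify a single order inequality at $t=1$, the paper establishes $L\subspaceleq{V^{H^{(2)}}}\beta^{H^{(2)}}$ by carrying $Q[\alpha^{H^{(1)}},L]$ along the homotopy, transporting by $dT^{-1}$, and applying a cyclic shift of the arguments of $Q$ before applying Lemma~\ref{lem:homotopyTransversalityPersistence} once more. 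Your ``connectedness'' intuition is the shadow of the first of these steps; the cyclic-shift step is the ingredient your sketch is missing, and it is precisely what lets the argument bypass the uncontrolled $\alpha^t_n,\beta^t_n$ entirely.
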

Finally, we show that the two generating functions of the billiard map satisfy this geometric assumption (for the definitions of those generating functions, see Subsection \ref{subsection:generatingFunctionsBilliards}).
To that end, we analyze the billiard reflection using the well-known Sinai-Chernov formula \cite{SinaiChernovFormula}.
\begin{theorem}\label{thm:equalityForBilliard}
Let $\Sigma\subseteq \R^d$ be a  $C^2$-smooth
convex hypersurface of positive curvature.
Let $L$ and $S$ denote the two generating functions for the Birkhoff billiard in $\Sigma$.
Then:
\[\mathcal{M}_L=\mathcal{M}_S.\]
\end{theorem}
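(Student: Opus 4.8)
The plan is to derive the theorem from Theorem~\ref{thm:geometricAssumptionImpliesMOrbitEq}: once one checks that the two generating functions $L$ (chord length) and $S$ (support-type) of the billiard map satisfy the Geometric Assumption~\ref{GeometricAssumption}, the equality $\mathcal{M}_L=\mathcal{M}_S$ follows formally. In fact I would prove the stronger, purely \emph{local} statement that the homotopy of Lagrangian subspaces required by (GA) exists at \emph{every} point of the phase space, not only along orbits in $\mathcal{M}_L\cup\mathcal{M}_S$; this is an infinitesimal statement about a single reflection off $\Sigma$.

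First I would fix a common symplectic picture of the phase space --- the manifold of oriented lines meeting $\Sigma$ --- in which both verticals can be read off. With respect to $L$ the vertical subspace $V^L$ at an oriented billiard segment is the tangent to the family of rays issuing from the \emph{fixed reflection point} $q$ (an infinitesimal point-source wavefront), whereas with respect to $S$ the vertical subspace $V^S$ is the tangent to the family of oriented lines of \emph{fixed direction} (an infinitesimal plane wavefront). Using the standard identification of Lagrangian subspaces transversal to a reference vertical with symmetric operators (Subsection~\ref{subsection:LagrangianSubspaces}), the four subspaces $\alpha^L,\beta^L,\alpha^S,\beta^S$ become wavefronts as well: $\alpha^L$ (resp.\ $\alpha^S$) is the point-source (resp.\ plane) wavefront emitted at the previous reflection point $q_{n-1}$, reflected off $\Sigma$ there, and then freely propagated to $q_n$; $\beta^L,\beta^S$ are the analogous wavefronts that converge toward $q_{n+1}$, transported backward to $q_n$. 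The transversality of $V^L$ to $\alpha^L,\beta^L$ and of $V^S$ to $\alpha^S,\beta^S$ is nothing but the twist condition for the corresponding generating function; the \emph{cross}-transversalities (of $V^L$ to $\alpha^S,\beta^S$ and of $V^S$ to $\alpha^L,\beta^L$), together with the existence of a path joining $V^L$ and $V^S$ through the common transversal set, constitute the actual content, and this is where the geometry of $\Sigma$ must be used.

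The main tool is the Sinai--Chernov formula~\cite{SinaiChernovFormula}. It expresses the shape operator of a wavefront after reflection off $\Sigma$ as the shape operator before reflection plus a correction equal to $\tfrac{2}{\cos\varphi}$ times the (pulled-back) second fundamental form of $\Sigma$ at the reflection point, $\varphi$ being the angle of incidence; free propagation over a positive chord length then acts on the shape operator by an explicit M\"obius-type flow. Since $\Sigma$ is convex with strictly positive curvature, its second fundamental form is positive definite, so reflecting a wavefront off $\Sigma$ \emph{from the inside} makes it more convergent by a \emph{definite} amount, uniformly along the reflection. Plugging in that the incident wavefronts of interest (point sources and plane waves) have positive semidefinite shape operators, the formula computes the operators attached to $\alpha^L,\beta^L,\alpha^S,\beta^S$ precisely enough that, after picking a generic Lagrangian $W$ transversal to all six subspaces (possible since the non-transversal locus of each is a proper subvariety of the Lagrangian Grassmannian), one can check in the chart $U_W$ that these four operators --- $\alpha^L$ and $\alpha^S$ on one side and $\beta^L$ and $\beta^S$ on the other --- lie in a configuration for which some path in the Lagrangian Grassmannian from $V^L$ to $V^S$ stays in the complement of the four corresponding non-transversal loci. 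That path is the homotopy $\{V_t\}$ demanded by (GA), and Theorem~\ref{thm:geometricAssumptionImpliesMOrbitEq} then gives $\mathcal{M}_L=\mathcal{M}_S$.

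The hard part will not be the qualitative signs but the quantitative bookkeeping needed to ensure that \emph{one and the same} path avoids all four subspaces at once: one must locate the shape operators of $\alpha^L,\beta^L,\alpha^S,\beta^S$ \emph{relative to} those of $V^L$ and $V^S$ sharply enough --- exploiting the precise $\tfrac{2}{\cos\varphi}$-weighting in Sinai--Chernov and the additivity of shape operators under free flight --- rather than merely controlling their signatures; and in the planar case $d=2$, where the Lagrangian Grassmannian is a circle, one must additionally verify that the four subspaces do not separate $V^L$ from $V^S$. A secondary technical nuisance is reconciling the two slightly different conventions for ``a point of the phase space'' underlying $L$ (a reflection point equipped with a direction) and $S$ (an oriented line), so that $V^L$, $V^S$ and all the $\alpha^{\bullet},\beta^{\bullet}$ are genuinely formed at one and the same point before the homotopy is constructed.
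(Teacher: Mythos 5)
Your high-level strategy coincides with the paper's: reduce to the Geometric Assumption \ref{GeometricAssumption} via Theorem~\ref{thm:geometricAssumptionImpliesMOrbitEq}, translate Lagrangian subspaces into wavefronts with shape operators, and track them through reflections with the Sinai--Chernov formula. The interpretation of $V^L$ as a point-source front at the reflection point and of $V^S$ as a plane wavefront is also exactly how the paper proceeds, and the paper even uses the concrete homotopy you would need: a family of spheres of radius $s$ centered at $y-su_2$, with curvature operator $B_s=\tfrac1s I$, interpolating from the point source ($s=0$) to the plane wave ($s=\infty$).

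However, there is a genuine gap in the stated plan. You assert you would prove the \emph{stronger} statement that (GA) holds at \emph{every} point of the phase space, ``an infinitesimal statement about a single reflection.'' That stronger statement is false, and trying to prove it would block the argument. For the explicit homotopy $\{V_s\}$, transversality of $V_s$ to a fixed Lagrangian with front operator $B$ for all $s\in[0,\infty]$ is equivalent to $B$ having no nonnegative eigenvalue, i.e.\ to $B$ being negative definite. For $\alpha^L$ the Sinai--Chernov formula gives $B=\tfrac{1}{|x-y|}I+2\langle u_2,n_y\rangle(p_{n_y,u_2}^{-1})^*dG_yp_{n_y,u_2}^{-1}$; the second term is negative definite, but the first is positive definite and dominates when the chord $|x-y|$ is short (grazing trajectories), so $B$ acquires positive eigenvalues. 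Symmetrically, the operator for $\beta^S$ is $\tfrac{1}{|u_2-u_3|}p_{n_z,u_2}(dG_z)^{-1}p_{n_z,u_2}^*-|z-y|I$, which fails to be negative definite when $|z-y|$ is short. So at generic (non-m-orbit) phase points two of the four required transversalities fail for this homotopy. This is precisely why the paper establishes (GA) \emph{only along $\mathcal{M}_L\cup\mathcal{M}_S$}: the m-orbit inequality $S_{11}+S_{22}<0$ (or $L_{11}+L_{22}<0$) is used, via the formulas of Lemmas \ref{lem:partialDerivativesS}--\ref{lem:partialDerivativesL}, to force exactly the negativity of those two operators. Only the transversalities with $\alpha^S$ and $\beta^L$ are unconditional. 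Your fallback of choosing a ``generic Lagrangian $W$ and a path avoiding four non-transversal loci'' does not repair this: in the planar case the Lagrangian Grassmannian is a circle, and at a non-m-orbit point the six subspaces can interlace so that $\alpha^L$ (say) topologically separates $V^L$ from $V^S$; no choice of path helps. Without invoking the m-orbit hypothesis, you cannot rule out this configuration. So the correct proof must make the verification of (GA) conditional on the orbit lying in $\mathcal{M}_L\cup\mathcal{M}_S$, and must derive the key negativity from $S_{11}+S_{22}<0$ or $L_{11}+L_{22}<0$, which your outline never does.
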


\begin{remark*}
	Let us recall a result by Bernard \cite{10.1215/S0012-7094-07-13631-7} and by Mazzucchelli and Sorrentino \cite{MAZZUCCHELLI2016419}  on Tonelli Hamiltonians. 
It was shown that if a Tonelli Hamiltonian remains Tonelli after an exact symplectic change of variables, then the Aubry, Ma\~{n}e, and Mather sets are the same for both Hamiltonians.	
Every exact symplectic twist map of a cylinder can be seen as a time-one map for some Tonelli Hamiltonian \cite{moser_1986}. 
This suggests that in the two-dimensional case, the result on Tonelli Hamiltonians and our results may be connected.
Additionally, it is known that if a higher dimensional twist map satisfies a ``symmetric" twist condition, then it is also a time-one map of a Tonelli Hamiltonian.
However, this interpolation result is not known for Birkhoff billiards in high dimensions.
Our approach in this work is direct, meaning that we deal directly with the discrete system, and do not rely on interpolation by Tonelli Hamiltonians.
%	Our approach is direct meaning we deal with the discrete system and not using interpolation by Tonelli Hamiltonian. Such interpolation is possible in 2d, also for higher dim for "symmetric" twist condition, and it is not known at all for multidim billiards... references
\end{remark*}

\subsection*{Structure of the paper} In Section \ref{section:bg}, we recall the basic definitions of twist maps, and generating functions, and we also recall what the generating functions $L,S$ for the Birkhoff billiard map are.
In Section \ref{section analysis}, we prove Theorem \ref{thm:criterionMinMax}.
In Section \ref{section:geometricMeaning}, we formulate the Geometric Assumption \ref{GeometricAssumption} and prove Theorems \ref{thm:morbitInequality}, \ref{thm:geometricAssumptionImpliesMOrbitEq}.
Lastly, in Section \ref{section:highDimBilliard}, we prove Theorem \ref{thm:equalityForBilliard} by showing that the two generating functions of the billiard map satisfy the assumptions of Theorem \ref{thm:geometricAssumptionImpliesMOrbitEq}.
Appendix \ref{app:secondOrderDerivatives} contains the derivatives for the generating functions $L,S$ of billiards.
 Appendix \ref{app:sinaiChernovProof} contains a proof of the Sinai-Chernov formula.
\\
\subsection*{Acknowledgements} It is a pleasure to thank Leonid Polterovich and Lev Buhovsky for encouraging discussions.

\section{Preliminaries}\label{section:bg}
In this section, we recall some of the definitions and constructions that will be used in this work.
In Subsection \ref{subsection:twistmapintro}, we recall the basic definitions of twist maps of cotangent bundles of tori and of ball bundles over spheres.
In Subsection \ref{subsection:generatingFunctionsBilliards}, we focus on the Birkhoff billiard map, and present two  generating functions for it.
In Subsection \ref{subsection:LagrangianSubspaces}, we recall some properties and constructions that are related to Lagrangian subspaces of symplectic vector spaces.
Finally, in Subsection \ref{subsection:wavefront}, we recall the correspondence between Lagrangian subspaces of the space of oriented lines and wave fronts.
\subsection{Twist maps of cotangent bundles of tori and ball bundles over spheres}\label{subsection:twistmapintro}
\subsubsection{Twist maps of cotangent bundles of tori}\label{subsubsection:twisttori}
Symplectic twist maps of cotangent bundles of tori arise in various mathematical and physical settings as maps that describe conservative systems.
Let us recall the definition.
\begin{definition}\label{def:twistMapDef}
Let $T$ be a diffeomorphism of the (co-)tangent bundle of the $d$ dimensional torus, $T^*\T^d\cong \T^d\times \R^d$.
The map $T$ is called an \textit{exact symplectic twist map} if it can be lifted to a diffeomorphism 
\[(q,p)\mapsto(Q(q,p),P(q,p))\]
 of $\R^{2d}$ which satisfies:
\begin{enumerate}
\item $Q(q+m,p)=Q(q,p)+m$ for all $m\in \Z^d$.
\item\label{itm:twistDiffeo}For every fixed $q$, the map $p\mapsto Q(q,p)$ is a diffeomorphism.
\item\label{itm:genFun} The 1-form $PdQ-pdq$ is exact, meaning that there exists a function $H:\R^{2d}\to\R$ (called a \textit{generating function}) which satsifies
\[PdQ-pdq = dH(q,Q),\]
and in addition $H(q+m,Q+m)=H(q,Q)$ for all $m\in\Z^d$.
\end{enumerate}
\end{definition}
Condition \ref{itm:genFun} implies that the map $T$ preserves the standard symplectic form $dp\wedge dq$.
The generating function allows to investigate the twist map using variational approaches, as it was done in \cite{AubryS1983TdFm,Bangert1988MatherSF,MacKayR.S1985CKTa, MacKayRS1989CKtf, MatherJohnN1991Vcoo}.
A function $H$ which satisfies the assumption $H(q+m,Q+m)=H(q,Q)$ for all $m\in\Z^d$, and also satisfies the assumption that the matrix of mixed partial derivatives $H_{12}$ is non-degenerate (the twist condition) can be used to define an exact symplectic twist map.
This is done by setting $F(q,p)=(Q,P)$ if $p=-H_1(q,Q)$ and $P=H_2(q,Q)$, and then letting $T$ be the projection of $F$ to $T^*\T^d$.
The condition that $H_{12}$ is non-degenerate guarantees that this definition does, in fact, define $(Q,P)$ as a function of $(q,p)$.
%Here and below the subscripts 1 or 2 denote partial derivatives according to the first or second variable.
\subsubsection{The sign of a twist maps}
\label{subsubsection:signtwist}
In the case that $d=1$, twist maps can be assigned a sign in the following way: Since $H_{12}$ is non-degenerate, it is a non-zero scalar function.
Therefore, $H_{12}$ is either positive or negative, and the sign of the twist map can be defined according to it.
The map has a positive twist if $H_{12}<0$ (this is equivalent to $\frac{\partial Q}{\partial p} > 0$: hence, word ``positive" is used here).
The sign of the twist map plays a significant role. 
For example, all Aubry-Mather sets and rotational invariant curves consist of minimizing orbits for positive twist maps.
In the case that $d>1$, it seems that the notion of a signed twist map is more mysterious.
One can define the sign of the twist map by considering the sign of the symmetric matrix $H_{12}+H_{12}^T=H_{12}+H_{21}$.
In this case, it was shown in \cite{MacKayRS1989CKtf} again that Lagrangian graphs of positive twist maps are composed of minimizing orbits.
There is a different definition for the sign by Herman (see \cite{SEDP_1987-1988____A14_0}), which is not equivalent to the previous one.
 In fact, as proved in \cite{MacKayRS1989CKtf}, a more general condition of superlinearity of the generating function, which unifies both definitions, implies the minimality property of invariant Lagrangian graphs (see also \cite[Theorem 35.2]{gole2001symplectic}).

For twist maps of ball bundles over spheres (see Subsection \ref{subsubsection:twistballbundle}), the notion of a signed twist map is even more mysterious, and the definitions mentioned above do not adapt easily to that setting. 
Nonetheless, the notion of signed twist map does seem to exist, because for convex billiards we can prove the results for maximizing orbits, but not for minimizing orbits.
In any case, the sign of the twist map will not play a significant role in the general theory that we develop here. 
%In the present paper, the significance of the choice of sign will be apparent for convex billiards (as we prove results only for maximizing orbits, and not for minimizing), but for the general theory, it will not play an important role. \textcolor{red}{can we even say that the billiard map has a sign? what does it mean? How to define it for ball bundle case? you can't symmetrize now...}
\subsubsection{Twist maps of ball bundle over sphere}\label{subsubsection:twistballbundle}
In this subsection, we briefly explain how the notion of a twist map of a torus can be adapted to ball bundles over a sphere (cf. \cite{gole2001symplectic}).
The billiard ball map in higher dimension is our main example of such a map, and the one we will discuss in detail later.
%%%%%%%%%%%%%%%%%%%%%%%%%%%%%%%%%%%%%%%%%%%%%%%
\begin{definition}\label{def:twistMapBallBundle}
Let $N\subseteq\mathbb{R}^d$ be a submanifold diffeomorphic to $\Sph^{d-1}$. 
Suppose that  $\pi:M\rightarrow N$ is a ball bundle over $N$, where $M\subseteq T^* N$ is a neighborhood of the zero section of $T^*N$, such that for every $x\in N$ the fiber  $N_x=\pi\inv(x)\subset T^*_xN$ is diffeomorphic to a $d$-dimensional ball. 
\begin{enumerate}
\item A diffeomorphism $T:M\to M$ is called a \textit{twist map} of $M$, if it is a symplectomorphism, and it satisfies the \textit{twist condition}: 
For all $x\in N$, the map $\pi\circ T\mid _{N_x}:N_x\to N$ is  a diffeomorphism from the interior of the ball $N_x$ to $N\setminus\set{x}$.
\item  
A \textit{generating function} for a twist map $T$ is a function $H:N\times N\setminus \Delta \to \R$ (where $\Delta$ denotes the diagonal) such that for all $x,y\in N$ and $u\in N_x$, $v\in N_y$ it holds that:
\[T(x,u)=(y,v)\Longleftrightarrow\begin{cases}u = -H_1(x,y) \\
	v = H_2(x,y), \\
\end{cases}\]
where subindices  $1,2$ stand for the differentials with respect to $x,y$.
\end{enumerate}

\end{definition}

We equip $N$ with a Riemannian metric induced from $\mathbb{R}^d$.
Then we can view the second order derivatives of $K$ as linear operators:
\begin{gather*}H_{12}(x,y):T_y N\to T_x N,\quad H_{21}(x,y):T_x N\to T_y N,\\
	H_{11}(x,y):T_x N\to T_x N, \quad H_{22}(x,y): T_y N\to T_y N,
\end{gather*}
where the operators $H_{12}$, $H_{21}$ are conjugate to each other, and $H_{11}$, $H_{22}$ are self-adjoint.
In this setting, the twist condition is the requirement that the operators $H_{12}$, $H_{21}$ are non-degenerate.
The Riemannian metric on $N$ allows us to identify the tangent and the cotangent spaces to $N$, and we will do so freely.
\subsubsection{Class of m-orbits for twist maps}\label{subsubsection:morbits}
The following discussion applies to both, twist maps of tori and twist maps of ball bundles over spheres.
Suppose that $T$ is an exact symplectic twist map with a generating function $H$.
The function $H$ gives rise to the following formal variational functional:
\[\sum_{n=-\infty}^\infty H(q_n,q_{n+1}).\]
It is straightforward to see that a sequence $\set{q_n}$ is a configuration (that is, it is the projection of an orbit of $T$, $\set{(q_n,p_n)}$) if and only if it is a critical point of the variational functional, i.e., any finite subsegment $\set{q_n}_{n=M}^N, M\leq N$ is a critical point of the truncated functional 
\[F_{MN}(x_{M},...,x_{N})=H(q_{M-1},x_{M})+\sum_{i=M}^{N-1}H(x_{i},x_{i+1})+H(x_{N},q_{N+1}).\]
In particular, sometimes it is interesting to look for sequences that are not only critical points but are actual local minimizers or maximizers.
At such points, the matrix of the second variation needs to be positive or negative definite, respectively.
It is straightforward to check that the second variation of $F_{MN}$ has the following block form:
\begin{equation}\label{eq:secondVariation}
W_{MN} =\delta^2 F_{MN} =  \begin{pmatrix}
a_{M} & b_{M} & 0\cdots&0 &0 \\
b^*_{M}& a_{M+1} &b_{M+1}\cdots&0&0  \\
\vdots  & \ddots  & \ddots & \ddots & \vdots \\
0 & \cdots&b^*_{N-2}&a_{N-1}&b_{N-1}  \\
0 & \cdots& 0&b^*_{N-1} & a_{N} 
\end{pmatrix},
\end{equation}
where the operators  $a_n,b_n$ are given by
\[a_n = H_{11}(q_n,q_{n+1})+H_{22}(q_{n-1},q_n),\quad b_n = H_{12}(q_n,q_{n+1}), \quad b_n^* = H_{21}(q_n,q_{n+1}),\]
where $b_k^*$ denotes the dual linear map to $b_k$.

In this work, we will consider locally maximizing orbits, and we call such orbits \emph{m-orbits} or \emph{m-configurations} (depends on whether we consider the point $(q_n,p_n)$, or its projection, $q_n$, respectively).
As mentioned in Proposition \ref{prop:elementaryProperties}, for negative twist maps, all orbits lying on an invariant Lagrangian graph are m-orbits.
\subsection{Generating functions for Birkhoff billiards}\label{subsection:generatingFunctionsBilliards}
Let $\Sigma$ be a smooth convex hypersurface in $\R^d$. 
The dynamical billiard system in $\Sigma$ is defined as the motion of a particle moving in the direction of a unit vector $v$.
When the particle hits the boundary, it reflects according to the classical law of geometric optics. 
It will move in the direction of a unit vector $u$ which satisfies the following two conditions: first, $u$, $v$ and the normal $n$ to $\Sigma$ at the collision point are co-planar; and second, $u$ and $v$ make equal angles with the tangent space to $\Sigma$ at the collision point, $\set{n}^\perp$.

The phase space of the billiard map is the space of oriented lines in $\R^d$ that intersect the hypersurface $\Sigma$.
Denote this space by $\mathcal{L}$. 
%The billiard map can be described as a self map of the space of oriented lines in $\R^d$ intersecting $\Sigma$.
%This space is naturally identified with $T^*\Sph^{d-1}$, 
The space $\mathcal{L}$ can be seen as a ball bundle over a hypersurface $N$ diffeomorphic to a sphere in two different ways.
%The set of all oriented lines intersecting $\Sigma$ can be viewed  naturally as a ball bundle $M\rightarrow N, M \subseteq T^*N$ over a hypersurface $N\subseteq\mathbb{R}^d$ diffeomorphic to a sphere in two different  ways. 
Thus, we have two different sets of symplectic coordinates on $\mathcal{L}$ inherited from $T^*N$ 
 (see Figure \ref{fig:twoGeneratingFunctions}): L-coordinates and S-coordinates.
%\begin{enumerate}
%\item\label{itm:LCoordinates}

\textbf{L-coordinates:} Pick a point $x\in \Sigma$ and let $w\in T_x \Sigma$ be a vector with $|w|<1$.
The pair $(x,w)$ determines the oriented line which passes through $x$ and has the direction $w-\sqrt{1-|w|^2}n_x$ where $n_x$ is the outer normal to $\Sigma$ at $x$.
In these coordinates, $wdx$ is a primitive of the canonical symplectic form of $T^* \Sigma$.
Thus, in this description, $\mathcal{L}$ is the unit ball bundle of $N=\Sigma$.
The generating function in those coordinates is 
\[L:(\Sigma\times \Sigma \setminus \Delta)\to\R,\quad L(x,y)=|x-y|.\]
This generated function was studied and used by Birkhoff.

%\item\label{itm:SCoordinates}
\textbf{S-coordinates:} Assume that the origin is inside $\Sigma$. 
Given an oriented line in $\R^d$, we can associate to it a point in $T^*\Sph^{d-1}$ in the following way: Take the unit vector direction of the line, $u\in\Sph^{d-1}$, and take $v\in\set{u}^\perp \cong T^*_u\Sph^{d-1}$ to be the intersection point of the line with $\set{u}^\perp$ (where we identify vectors with co-vectors using the inner product), and set $p=-v$.
Then the form $pdu$ is also a primitive of the canonical symplectic form on $T^* \Sph^{d-1}$. 
This symplectic form coincides with the one described %in \ref{itm:LCoordinates}.
for the L-coordinates.
 Thus, in this case, we have $N=\Sph^{d-1}$, and the ball $N_u$, for $u\in \Sph^{d-1}$, is (the negative of) the projection of $\Sigma$ on the hyperplane $\set{u}^\perp$.

The generating function for the billiard map in those coordinates is $S:(\Sph^{d-1}\times \Sph^{d-1}\setminus \Delta)\to\R$ defined by 
\[S(u_1,u_2)=\inprod{G\inv\Big(\frac{u_1-u_2}{|u_1-u_2|}\Big)}{u_1-u_2},\]
 where $G:\Sigma\to \Sph^{d-1}$ denotes the Gauss map of $\Sigma$.
This generating function was used in ellipsoids in \cite{suris2016billiards}, and  in \cite{BialyMisha2020DRia, BialyMisha2017Abaa, BialyMisha2018Gbti, bialy2022locally} in the general case.

%\end{enumerate}
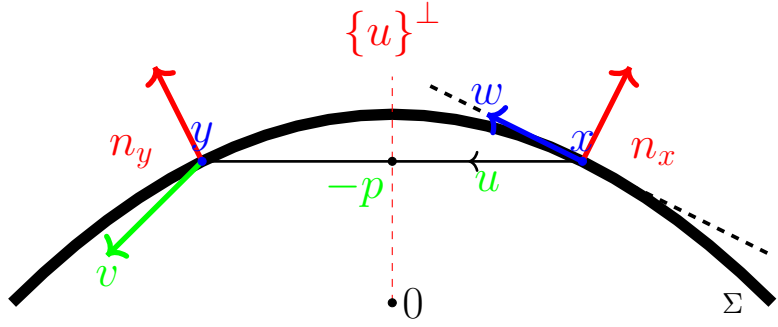
\begin{figure}
	\centering
	\begin{tikzpicture}[scale = 2.5]
	\begin{scope}[decoration={
markings,
mark = at position 0.3 with {\arrow{>}}}
]
	\draw[line width = 1.5mm, domain = -2:2] plot(\x,1-0.25*\x*\x);
\draw[black, line width = 1pt, postaction = {decorate}](1,0.75)--(-1,0.75);
\end{scope}
\draw[line width = 0.5mm, black, dashed](0.2,1.15)--(2,0.25);
\draw[red, dashed] (0,0)--(0,1.2);
	\begin{scope}[decoration={
markings,
mark = at position 1 with {\arrow{>}}}
]
\draw[blue, postaction = {decorate}, line width = 0.7mm](1,0.75)--(0.5,1);
\draw[red, postaction = {decorate}, line width = 0.7mm](1,0.75)--(1.25,1.25);
\draw[green,postaction = {decorate}, line width = 0.7mm](-1,0.75)--(-1.5,0.25);
\draw[red, postaction = {decorate}, line width = 0.7mm](-1,0.75)--(-1.25,1.25);
\end{scope}

\tkzDefPoint(1,0.75){A};
\tkzDrawPoint[blue](A);
\tkzDefPoint(-1,0.75){B}
\tkzDrawPoint[blue](B);
\tkzDefPoint(0,0){O};
\tkzDrawPoint[black](O);
\tkzDefPoint(0,0.75){C};
\tkzDrawPoint[black](C);

\node[above,blue] at (1,0.75) {\LARGE{$x$}};
\node[above,blue] at (-1,0.75) {\LARGE{$y$}};
\node[below, green] at (-0.2,0.75) {\LARGE{$-p$}};
\node[above, red] at (0,1.25) {\LARGE{$\set{u}^\perp$}};
\node[right, red] at (1.2,0.8) {\LARGE{$n_x$}};
\node[left, red] at (-1.2,0.8) {\LARGE{$n_y$}};
\node[green,below] at (0.5,0.75) {\LARGE{$u$}};
\node[blue, above] at (0.5,1) {\LARGE{$w$}};
\node[green, below] at (-1.5,0.25) {\LARGE{$v$}};
\node[right] at(0,0) {\LARGE{$0$}};
\node[left] at (1.9,0) {$\LARGE{\Sigma}$};
	\end{tikzpicture}
	\caption{Two sets of symplectic coordinates for the billiard map inside a hypersurface $\Sigma\subseteq\R^d$. \label{fig:twoGeneratingFunctions}}
\end{figure}
The following two lemmas state that these two functions are indeed generating functions for billiards, and that the momentum coordinate is in fact correct.
We also compute the second derivatives of these functions, which will be used in Section \ref{section:highDimBilliard}.
These formulas were already derived in \cite{BialyMisha2018Gbti, 1930-5311_2009_1_51}, but we include the computation for completeness.
We also present the formulas here in a slightly different form that will be more convenient for us in Section \ref{section:highDimBilliard}.

We use the following notation: For a unit vector $a$, $p_a$ denotes the orthogonal projection to $\set{a}^\perp$. 
For a pair of unit vectors $a$,$b$, $p_{a,b}$ denotes the restriction of $p_b$ to $\set{a}^\perp$.
Also, for two distinct unit vectors $a$, $b$, write $n(a,b)=\frac{a-b}{|a-b|}$.
\begin{lemma}\label{lem:partialDerivativesS}
The partial derivatives of $S$ are: (here $u_1$, $u_2$, $u_3$ denote distinct unit vectors, $n_3 = n(u_2,u_3)$, $n_2 = n(u_1,u_2)$, and $x_i$ are the points of the hypersurface where the unit normals are $n_i$)
\[S_1(u_2,u_3) = p_{u_2}G\inv(n(u_2,u_3)), \quad S_2(u_1,u_2) = -p_{u_2}G\inv(n(u_1,u_2)).\]
And the second-order derivatives are:
\begin{gather*}
S_{11}(u_2,u_3)=\frac{1}{|u_2-u_3|}p_{n_3,u_2}dG_{n_3}\inv p_{u_2,n_3} - \inprod{x_3}{u_2}I, \\
S_{22}(u_1,u_2)=\frac{1}{|u_1-u_2|}p_{n_2,u_2}dG_{n_2}\inv p_{u_2,n_2} + \inprod{x_2}{u_2}I.
\end{gather*}
And their sum is:
\begin{gather*}
S_{11}(u_2,u_3)+S_{22}(u_1,u_2)=\frac{1}{|u_1-u_2|}p_{n_2,u_2}dG_{n_2}\inv p_{u_2,n_2} + \\
+\frac{1}{|u_2-u_3|}p_{n_3,u_2}dG_{n_3}\inv p_{u_2,n_3} - |x_2-x_3|I.
\end{gather*}
In addition, the mixed partial derivative $S_{12}(u_1,u_2): T_{u_2}\Sph^{d-1}\to T_{u_1} \Sph^{d-1}$ is:
\[S_{12}(u_1,u_2)  = -\frac{1}{|u_1-u_2|} p_{n_2,u_1} dG\inv_{n_2} p_{u_2,n_2},\]
which is non-degenerate.
\end{lemma}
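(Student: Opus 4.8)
The plan is first to rewrite $S$ in the compact form
$S(u_1,u_2)=\langle x,w\rangle$, where $w=u_1-u_2$, $n=n(u_1,u_2)=w/|w|$, and $x=G\inv(n)$ is the point of $\Sigma$ whose outer unit normal is $n$. Throughout the computation I will use only two elementary facts: (i) since $x$ has normal $n$ we have $T_x\Sigma=\set{n}^\perp$, and any variation of $x$ has the form $\delta x=dG\inv_{n}(\delta n)$ with $\delta x\in\set{n}^\perp$, in particular $\langle\delta x,n\rangle=0$; and (ii) differentiating $n=w/|w|$ gives $\delta n=\frac{1}{|w|}p_n(\delta w)$.

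\textbf{First derivatives.} From $S=\langle x,w\rangle$ we get $dS=\langle\delta x,w\rangle+\langle x,\delta w\rangle$. Because $w=|w|\,n$ and $\langle\delta x,n\rangle=0$ by (i), the first term vanishes (this is the standard envelope-type cancellation), so $dS=\langle x,\delta u_1\rangle-\langle x,\delta u_2\rangle$. Restricting $\delta u_i$ to $T_{u_i}\Sph^{d-1}=\set{u_i}^\perp$ and writing $\langle x,\delta u_i\rangle=\langle p_{u_i}x,\delta u_i\rangle$ yields $S_1=p_{u_1}x$ and $S_2=-p_{u_2}x$; relabelling the first slot as $u_2$ and the second as $u_3$ gives exactly the claimed formulas $S_1(u_2,u_3)=p_{u_2}G\inv(n(u_2,u_3))$ and $S_2(u_1,u_2)=-p_{u_2}G\inv(n(u_1,u_2))$. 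As a by-product, $-S_1$ is the intersection point $-v$ of the line with $\set{u}^\perp$, which is the asserted momentum coordinate.

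\textbf{Second derivatives.} Here one differentiates the sections $S_1,S_2$ of $T\Sph^{d-1}$ using the Levi--Civita connection of the round metric, i.e.\ one computes the $\R^d$-derivative and then projects orthogonally onto the relevant $\set{u_i}^\perp$. For $S_{22}$ one varies $u_2$ in $S_2=-\bigl(x-\langle x,u_2\rangle u_2\bigr)$: the term $-\delta x$ contributes, via (i)--(ii) with $\delta w=-\delta u_2$ and after applying $p_{u_2}$, the operator $\frac{1}{|w|}p_{n,u_2}\,dG\inv_{n}\,p_{u_2,n}$; differentiating the projection $p_{u_2}$ produces the scalar term $+\langle x,u_2\rangle I$; and the remaining cross terms are annihilated by the final projection since $\delta u_2\in\set{u_2}^\perp$. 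The computation of $S_{11}$ is identical up to the sign of $\delta w$, giving $\frac{1}{|w|}p_{n,u_1}\,dG\inv_{n}\,p_{u_1,n}-\langle x,u_1\rangle I$; and $S_{12}$ comes from varying $u_2$ in $S_1=p_{u_1}x$ with the projection $p_{u_1}$ frozen and only $n$ moving, giving $-\frac{1}{|w|}p_{n,u_1}\,dG\inv_{n}\,p_{u_2,n}$. Self-adjointness of $S_{11},S_{22}$ and the relation $S_{12}^{*}=S_{21}$ then follow from the self-adjointness of $dG\inv_{n}$ on $\set{n}^\perp$ together with the fact that $p_{a,b}$ and $p_{b,a}$ are mutually adjoint.

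\textbf{The sum and non-degeneracy; main obstacle.} Adding $S_{11}(u_2,u_3)$ and $S_{22}(u_1,u_2)$, the scalar parts combine to $\langle x_2-x_3,u_2\rangle I$; since along a billiard configuration $x_2$ and $x_3$ are consecutive impact points with intermediate segment of direction $u_2$ (this is the content of the companion statement identifying the S-coordinates with the billiard dynamics, equivalently the criticality relation $p_{u_2}(x_3-x_2)=0$), we have $x_3=x_2+|x_2-x_3|\,u_2$, so $\langle x_2-x_3,u_2\rangle=-|x_2-x_3|$, which produces the stated $-|x_2-x_3|I$. Finally $S_{12}(u_1,u_2)$ is the composition $p_{u_2,n_2}\colon\set{u_2}^\perp\to\set{n_2}^\perp$, then $dG\inv_{n_2}\colon\set{n_2}^\perp\to\set{n_2}^\perp$, then $p_{n_2,u_1}\colon\set{n_2}^\perp\to\set{u_1}^\perp$; the middle map is invertible (indeed positive) because $\Sigma$ has positive curvature so $G$ is a diffeomorphism, and the two projections are isomorphisms because $\abs{\langle u_i,n_2\rangle}=\tfrac12\abs{u_1-u_2}\neq 0$, whence $S_{12}$ is non-degenerate. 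The only genuinely delicate point is the second-derivative bookkeeping: pinning down what $S_{11},S_{22}$ mean as self-adjoint operators between tangent spaces of $\Sph^{d-1}$ (Hessians with respect to the round connection, not just at critical points) and keeping exact track of the scalar $\pm\langle x,u_i\rangle I$ contributions from differentiating $p_{u_i}$ — and in particular their sign pattern, which is precisely what makes the sum collapse to $-|x_2-x_3|I$.
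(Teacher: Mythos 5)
Your proposal is correct and follows essentially the same route as the paper's proof in Appendix~\ref{app:secondOrderDerivatives}: both exploit the identity $\delta n(\gamma(t),\cdot)=\frac{1}{|w|}p_{\cdot}\dot\gamma$, the orthogonality $\delta x\perp n$ coming from $dG^{-1}_n$ taking values in $\{n\}^\perp$, and the Levi--Civita (ambient-derivative-then-project) computation for the Hessians. Your only addition of substance is the explicit remark that the collapse $\langle x_2-x_3,u_2\rangle=-|x_2-x_3|$ in the sum formula uses the billiard criticality relation $x_3-x_2\parallel u_2$ with the correct orientation; the paper leaves this step implicit (``deriving the formula for the sum $\ldots$ is also straightforward''), so flagging it is a reasonable clarification rather than a different approach.
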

Notice that $p_{u_2}G\inv(n(u_2,u_3))$ and $p_{u_2}G\inv(n(u_1,u_2))$ are exactly the negatives of the momentum coordinates for $S$, so if we write $v_1 = -p_{u_2}G\inv(n(u_2,u_3))$ and $v_2 = -p_{u_2}G\inv(n(u_1,u_2))$, then we indeed have 
$v_1 = -S_1$, $v_2 = S_2$.

\begin{lemma}\label{lem:partialDerivativesL}
The partial derivatives of $L$ are: (here $x_1$, $x_2$, $x_3$ are points on the hypersurface, $n_i=G(x_i)$ is the outer normal at those points, and $u_1$ is the unit vector from $x_1$ to $x_2$ and $u_2$ is the unit vector from $x_2$ to $x_3$)
\[L_1(x_2,x_3) = -p_{n_2}u_2,\quad L_2(x_1,x_2)=p_{n_2}u_1.\]
The second-order partial derivatives are:
\begin{gather*}
L_{11}(x_2,x_3)=\frac{1}{|x_3-x_2|}I-\frac{\inprod{u_2}{\cdot}}{|x_3-x_2|}p_{n_2}u_2+\inprod{u_2}{n_2}dG_{x_2}, \\
L_{22}(x_1,x_2)=\frac{1}{|x_2-x_1|}I-\frac{\inprod{u_1}{\cdot}}{|x_2-x_1|}p_{n_2}u_1 - \inprod{u_1}{n_2}dG_{x_2}.
\end{gather*}
And their sum is:
\[L_{11}(x_2,x_3)+L_{22}(x_1,x_2)=\Bigg(\frac{1}{|x_1-x_2|}+\frac{1}{|x_2-x_3|}\Bigg)p_{u_2,n_2}p_{n_2,u_2} + 2\inprod{u_2}{n_2}dG_{x_2}.\]
In addition, the mixed partial derivative $L_{12}(x_2,x_3):T_{x_3}\Sigma\to T_{x_2}\Sigma$ is:
\[L_{12}(x_2,x_3) = -\frac{1}{|x_3-x_2|}p_{u_2,n_2}p_{n_3,u_2},\]
which is non-degenerate.
\end{lemma}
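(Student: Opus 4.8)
\emph{Setup.} The plan is to treat $L(x,y)=|x-y|$ as the restriction to $\Sigma\times\Sigma$, off the diagonal, of the smooth ambient function $(x,y)\mapsto|x-y|$ on $\R^d\times\R^d$, and to compute everything by differentiating along curves in $\Sigma$, using just two elementary facts: the derivative of a unit vector, $\tfrac{d}{dt}\big(z/|z|\big)=\tfrac{1}{|z|}\,p_{z/|z|}\dot z$, and the definition of the Gauss map differential, $\tfrac{d}{dt}\,n_{x(t)}=dG_{x}\dot x\in T_x\Sigma$ for a curve $x(t)$ in $\Sigma$. The second‑order operators $L_{11}(x,y)$ and $L_{22}(x,y)$ must be read as the Riemannian Hessians of $L$ in the respective slot for the metric induced from $\R^d$; concretely, one differentiates the tangent‑vector‑valued fields $L_1$, $L_2$ along $\Sigma$ with the ambient flat derivative and then projects the outcome back onto the tangent space (this tangential projection is exactly the induced Levi‑Civita covariant derivative). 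It is this reading, rather than the naive coordinate Hessian, that produces the shape‑operator terms in the stated form.

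\emph{First derivatives.} The ambient gradient of $|x-y|$ in the first slot is $(x-y)/|x-y|$; projecting onto $T_x\Sigma$ gives $L_1$. With the lemma's labelling $(x_2-x_3)/|x_2-x_3|=-u_2$, so $L_1(x_2,x_3)=-p_{n_2}u_2$, and symmetrically $L_2(x_1,x_2)=p_{n_2}u_1$. As a byproduct, $-L_1=p_{n_2}u_2$ and $L_2=p_{n_2}u_1$ are the momenta at $x_2$ carried in the L‑coordinates by the outgoing chord to $x_3$ and by the incoming chord from $x_1$; their equality, i.e. the condition that $x_2$ be a genuine reflection point, is precisely the reflection law $u_2=u_1-2\inprod{u_1}{n_2}n_2$, which will also be used below.

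\emph{Second derivatives.} For $L_{11}(x_2,x_3)$ I would differentiate the identity $L_1=-u_2+\inprod{u_2}{\nu}\nu$ (with $\nu$ the outer unit normal at the moving first argument) along a curve through $x_2$ with velocity $\xi\in T_{x_2}\Sigma$: $-u_2$ contributes $\tfrac{1}{|x_2-x_3|}p_{u_2}\xi$, while the Leibniz expansion of $\inprod{u_2}{\nu}\nu$ contributes terms in $dG_{x_2}\xi$ and along $n_2$. Splitting $u_2=p_{n_2}u_2+\inprod{u_2}{n_2}n_2$, all $n_2$‑components cancel except $\inprod{p_{n_2}u_2}{dG_{x_2}\xi}\,n_2$, which the final tangential projection removes; the remainder is exactly $\tfrac{1}{|x_3-x_2|}I-\tfrac{\inprod{u_2}{\cdot}}{|x_3-x_2|}p_{n_2}u_2+\inprod{u_2}{n_2}dG_{x_2}$ (equivalently, this is the Gauss‑formula expression of a restricted Hessian: ambient Hessian plus the second fundamental form paired with the ambient gradient). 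The formula for $L_{22}$ comes out identically from $L_2=p_{\nu}u_1$, now with $u_1=(x_2-x_1)/|x_2-x_1|$ also depending on the moving point, the only change being the sign of the $dG_{x_2}$‑term (or one may just invoke $L(x,y)=L(y,x)$). For the sum I substitute the reflection law, which gives $p_{n_2}u_1=p_{n_2}u_2$, $\inprod{u_1}{n_2}=-\inprod{u_2}{n_2}$, and $\inprod{u_1}{\xi}=\inprod{u_2}{\xi}$ for tangent $\xi$; this puts $L_{22}$ in the same form as $L_{11}$ with $|x_2-x_3|$ replaced by $|x_1-x_2|$, and adding, together with the identity $\xi-\inprod{u_2}{\xi}p_{n_2}u_2=p_{u_2,n_2}p_{n_2,u_2}\xi$, yields the claimed sum.

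\emph{Mixed derivative, non‑degeneracy, and the main point.} Differentiating $L_1(x_2,\cdot)=-p_{n_2}u_2$ in the second slot, only $u_2=(x_3-x_2)/|x_3-x_2|$ moves, with $\partial u_2/\partial x_3=\tfrac{1}{|x_3-x_2|}p_{u_2}$ restricted to $T_{x_3}\Sigma$, i.e. $\tfrac{1}{|x_3-x_2|}p_{n_3,u_2}$; post‑composing with $-p_{n_2}$, whose restriction to $\set{u_2}^\perp$ is $-p_{u_2,n_2}$, gives $L_{12}(x_2,x_3)=-\tfrac{1}{|x_3-x_2|}p_{u_2,n_2}p_{n_3,u_2}$. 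Non‑degeneracy then reduces to the elementary fact that $p_{a,b}:\set{a}^\perp\to\set{b}^\perp$ is invertible iff $\inprod{a}{b}\neq0$, applied with $\inprod{u_2}{n_2}\neq0$ and $\inprod{u_2}{n_3}\neq0$, which hold because the open chord from $x_2$ to $x_3$ lies in the interior of the convex body bounded by $\Sigma$, hence points strictly inward at $x_2$ and strictly outward at $x_3$ relative to the outer normals. I expect no genuine obstacle here: the whole argument is differentiation along curves plus bookkeeping, and the only real subtlety, worth stating carefully, is the covariant reading of $L_{11},L_{22}$ (without which the $dG_{x_2}$‑terms would not appear); tracking that every normal component either cancels or is projected away is the part most prone to sign errors.
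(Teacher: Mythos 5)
Your proof is correct and follows essentially the same route as the paper's: differentiate along curves in $\Sigma$, read $L_{11}$, $L_{22}$ as the induced covariant derivatives (tangential projections of the ambient derivatives of the tangent fields $L_1$, $L_2$, which is exactly where the $dG_{x_2}$-terms arise), invoke the reflection identities $p_{n_2}u_1=p_{n_2}u_2$ and $\inprod{u_1}{n_2}=-\inprod{u_2}{n_2}$ for the sum, and compute $L_{12}$ by differentiating $-p_{n_2}u_2$ in the second slot with the appropriate domain/codomain restrictions of the projections. One small point in your favour: your criterion for invertibility of $p_{a,b}\colon\set{a}^\perp\to\set{b}^\perp$, namely $\inprod{a}{b}\ne0$, is the precise condition, whereas the paper's parenthetical remark loosely says ``not parallel'' where ``not perpendicular'' is what is actually needed.
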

The proofs of these Lemmas are in Appendix \ref{app:secondOrderDerivatives}.

\subsection{Lagrangian subspaces}\label{subsection:LagrangianSubspaces}
In this subsection, we recall the construction of a partial order on the set of Lagrangian subspaces of a symplectic vector space that are transversal to a  given Lagrangian subspace.
This will be used in Section \ref{section:geometricMeaning}.
Consider a symplectic vector space $(U,\omega)$, and a Lagrangian subspace $\gamma$.
Choose symplectic coordinates $(q,p)$ on $U$ such that $\gamma = \set{q=0}$.
Then the partial order is defined as follows: If $L$ is a Lagrangian subspace of $U$ which is transversal to $\gamma$, then there exists a unique symmetric matrix $A$ such that $L=\set{p=Aq}$.
If $L_1$, $L_2$ are two Lagrangian subspaces that are transversal to $\gamma$, and $A_1$, $A_2$ are the respective symmetric matrices, then we set $L_1\subspaceleq{\gamma} L_2$ if and only if $A_1 < A_2$, meaning that $A_2-A_1$ is a positive definite matrix.

The obtained relation $\subspaceleq{\gamma}$ does not depend on the choice of symplectic coordinates as long as $\gamma$ has the description $\set{q=0}$ in those coordinates.
Indeed, if $(q',p')$ is another set of symplectic coordinates such that $\gamma=\set{q'=0}$, then the transition matrix between those coordinates has the form $$\begin{pmatrix}
A & 0 \\ B & C
\end{pmatrix},\quad A^TC=I,\quad A^TB=B^TA,$$
since it must be a symplectic matrix. 
From here, if a subspace transversal to $\gamma$ has the description $\set{p=Kq}$ in one set of coordinates and $\set{p'=K'q'}$ in the other set of coordinates, then it is simple to compute that $K'=C K C^T + B C^T$, and hence any inequality between subspaces will be independent of the coordinates.

\begin{comment}
An alternative, coordinate-free description of the inequality is the following.
Suppose that $U$ is a Hermitian space, viewed as a linear space over $\R$.
Consider the symplectic form $\omega$ which is the imaginary part of the Hermitian form, the inner product $\inprod{\cdot}{\cdot}$ which is the real part of the Hermitian product, and $J:U\to U$ which is the $\R$-linear map obtained from multiplication by $i=\sqrt{-1}$.
Then $J^2=-I$, and in addition $\omega(\cdot,J\cdot)=\inprod{\cdot}{\cdot}$.
Then in particular for any Lagrangian subspace $\gamma$ of $U$, we have $J\gamma\oplus \gamma = U$.
Now, if $L$ is a Lagrangian transversal to $\gamma$, we can consider the two projections $q:L\to J\gamma$ and $p:L\to\gamma$, and from them we can construct the map $J\inv \circ p \circ q\inv :J\gamma\to J\gamma$.
We will say, as a shorthand, that $L$ is the graph of $J\inv\circ p\circ q\inv$, even if technically $L$ is the graph of $p\circ q\inv$.
The map $J\inv\circ p\circ q\inv$ is a self-adjoint linear map of $J\gamma$, and then we can compare two Lagrangians transversal to $\gamma$ by comparing the two self-adjoint operators of $J\gamma$ constructed in this manner.
\end{comment}

This partial order is related to the index form (see \cite{ArnoldV.I.1985TSta}) defined as follows. 
Let $\alpha$,$\beta$ be two Lagrangian subspaces of $U$, which are transversal to each other (but might not be transversal to $\gamma$). 
Then any $u\in U$ can be uniquely written as $u=u_1+u_2$ for $u_1\in\alpha$ and $u_2\in\beta$, and we define a quadratic form $Q[\alpha,\beta]$ by:
\[Q[\alpha,\beta](u)=\omega(u_1,u_2).\]
Then the following holds:
\begin{proposition}\label{prop:orderQuadraticForm}
\begin{enumerate}
\item\label{itm:singleInequalityPositive} Suppose that $\alpha$ and $\beta$ are Lagrangian subspaces transversal to $\gamma$.
Then
\[\alpha\subspaceleq{\gamma} \beta\Longleftrightarrow Q[\alpha,\beta]\mid_\gamma >0.\]
\item\label{itm:doubleInequalityNegative} Suppose that $\alpha$, $\beta$, and $L$ are Lagrangian subspaces transversal to $\gamma$, and $\alpha\subspaceleq{\gamma} \beta$. %, with $\alpha <_\gamma \beta$. \textcolor{red}{we only use the directoin $\Longrightarrow$, so maybe we shouldn't formulate the other direction, especially if its proof seems to be not entirely pleasant? If we omit, we can drop the apriori assumption of $\alpha<\beta$.}
Then:
\[\alpha\subspaceleq{\gamma} L \subspaceleq{\gamma} \beta \Longleftrightarrow Q[\alpha,\beta]\mid_L < 0.\]
\end{enumerate}
\end{proposition}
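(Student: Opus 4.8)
\textbf{Proof proposal for Proposition \ref{prop:orderQuadraticForm}.}

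The plan is to work entirely in symplectic coordinates $(q,p)$ in which $\gamma = \set{q=0}$, since by the discussion preceding the proposition the relation $\subspaceleq{\gamma}$ is coordinate-independent, and the quadratic forms $Q[\alpha,\beta]$ are defined invariantly. First I would set up notation: write $\alpha = \set{p = Aq}$, $\beta = \set{p = Bq}$ with $A, B$ symmetric, so that $\alpha \subspaceleq{\gamma} \beta$ means precisely $B - A > 0$. To compute $Q[\alpha,\beta]$ on $\gamma$, I take a vector $u = (0, r) \in \gamma$ and decompose it as $u = u_1 + u_2$ with $u_1 = (s, As) \in \alpha$ and $u_2 = (-s, -Bs + \text{(correction)}) \in \beta$; solving the linear system $s - s' = 0$ on the $q$-component and $As + p_2 = r$ forces $s' = s$ and $p_2 = r - As$, together with the constraint $p_2 = -Bs'$ wait — more carefully, $u_1 = (s, As)$, $u_2 = (t, Bt)$, and $u_1 + u_2 = (0,r)$ gives $t = -s$ and $As + Bt = r$, i.e. $(A - B)s = r$, so $s = (A-B)\inv r$. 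Then $\omega(u_1, u_2) = \omega((s,As),(-s,-Bs))$; using the standard symplectic form $\omega((q,p),(q',p')) = \inprod{p}{q'} - \inprod{q}{p'}$ this evaluates to $\inprod{As}{-s} - \inprod{s}{-Bs} = \inprod{(B-A)s}{s}$. Substituting $s = (A-B)\inv r = -(B-A)\inv r$ gives $Q[\alpha,\beta](0,r) = \inprod{(B-A)\inv r}{r}$ (the two sign factors cancel). Hence $Q[\alpha,\beta]\mid_\gamma > 0 \iff (B-A)\inv > 0 \iff B - A > 0 \iff \alpha \subspaceleq{\gamma}\beta$, which is part \eqref{itm:singleInequalityPositive}. (I would double-check the sign conventions for $\omega$ against the rest of the paper; an overall sign error here would just flip the statement, so the main content is the computation pattern.)

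For part \eqref{itm:doubleInequalityNegative}, I would exploit that $\alpha$ and $\beta$ are transversal to each other: given $\alpha \subspaceleq{\gamma} \beta$, the matrix $B - A$ is invertible, so $\alpha \cap \beta = 0$ and $\alpha \oplus \beta = U$. The idea is to change to new symplectic coordinates adapted to the pair $(\alpha,\beta)$, i.e. coordinates $(q',p')$ in which $\alpha = \set{q' = 0}$ and $\beta = \set{p' = 0}$ (a "Darboux basis" subordinate to the transversal pair). In such coordinates, for any third Lagrangian $L$ transversal to both $\alpha$ and $\beta$ one writes $L = \set{p' = Cq'}$ with $C$ symmetric and invertible, and a direct computation like the one above shows $Q[\alpha,\beta]\mid_L$ corresponds to the form $-\inprod{C\inv q'}{q'}$ (or $-\inprod{Cq'}{q'}$ depending on which variable parametrizes $L$) — in any case $Q[\alpha,\beta]\mid_L < 0 \iff C > 0$ or $C < 0$ appropriately. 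Separately I must translate the condition $\alpha \subspaceleq{\gamma} L \subspaceleq{\gamma}\beta$ into the same data. For this I would either invoke part \eqref{itm:singleInequalityPositive} twice — $Q[\alpha,L]\mid_\gamma > 0$ and $Q[L,\beta]\mid_\gamma > 0$ — or, more cleanly, argue in the $(q,p)$ coordinates: writing $L = \set{p = Kq}$, the double inequality is $A < K < B$, and one checks that this is equivalent to the $(\alpha,\beta)$-coordinate matrix $C$ being negative definite via the explicit change-of-basis formula $K' = CKC^T + BC^T$ recorded earlier in the excerpt (or its analogue). Combining the two equivalences gives \eqref{itm:doubleInequalityNegative}.

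The routine but slightly delicate part is bookkeeping the three simultaneous normal forms — $\gamma = \set{q=0}$ for the order, and $(\alpha,\beta)$ as a transversal pair for evaluating $Q[\alpha,\beta]$ — and making sure the change-of-coordinates formula connecting them is applied with consistent conventions. I expect the main obstacle to be precisely this: keeping the sign conventions for $\omega$, for the index form, and for the partial order all mutually consistent, so that the inequalities come out as stated ($>0$ in the first case, $<0$ in the second) rather than reversed. A clean way to sidestep coordinate juggling for \eqref{itm:doubleInequalityNegative} is to reduce it to \eqref{itm:singleInequalityPositive}: since $\alpha, \beta$ are transversal, pick coordinates where $\alpha = \set{q=0}$; then $\beta = \set{p = B_0 q}$ with $B_0$ invertible symmetric, $L = \set{p = K_0 q}$, and one shows $A < K < B$ in the original $\gamma$-coordinates translates (via the symplectic transition matrix) to $K_0$ having a definite sign, while $Q[\alpha,\beta]\mid_L$ is, by the computation in the first part applied with roles permuted, proportional to $\inprod{K_0\inv q}{q}$ up to sign; matching signs finishes the proof. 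I would present whichever of these two routes turns out shorter once the conventions are pinned down.
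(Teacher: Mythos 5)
Your proof of part \eqref{itm:singleInequalityPositive} is correct and is essentially the paper's computation: decompose a vector of $\gamma$ into its $\alpha$- and $\beta$-components and evaluate $\omega$. You substitute $s=(A-B)\inv r$ at the end, where the paper leaves the answer as $\inprod{x}{(B-A)x}$; the two are equivalent.

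Part \eqref{itm:doubleInequalityNegative}, however, is left incomplete. You propose normalizations adapted to the pair $(\alpha,\beta)$, or to $\alpha$ alone, and you correctly flag that reconciling those coordinates with the order $\subspaceleq{\gamma}$ is ``the main obstacle'' --- but you never clear it. The observation you are missing is that $\gamma$ and $\alpha$ are \emph{themselves} a transversal pair of Lagrangians, so a single symplectic change of basis achieves $\gamma=\set{q=0}$ and $\alpha=\set{p=0}$ simultaneously: after arranging $\gamma=\set{q=0}$ and writing $\alpha=\set{p=Aq}$, the shear $(q,p)\mapsto(q,p-Aq)$ fixes $\gamma$ and sends $\alpha$ to $\set{p=0}$. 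With both normalized at once there is nothing left to translate: $\beta=\set{p=Bq}$ with $B>0$, $L=\set{p=Wq}$, the chain $\alpha\subspaceleq{\gamma}L\subspaceleq{\gamma}\beta$ is literally $0<W<B$, and splitting $(q,Wq)=(x,0)+(y,By)$ gives $Q[\alpha,\beta]|_L=-\inprod{W\inv z}{z}+\inprod{B\inv z}{z}$ with $z=Wq$, which is negative definite iff $W\inv>B\inv>0$ iff $0<W<B$. By contrast, in your proposed coordinates $\gamma$ is pushed off the coordinate planes, and the transition formula $K'=CKC^T+BC^T$ you cite is stated in the paper only for symplectic changes that fix $\set{q=0}$; it does not apply to the passage to $(\alpha,\beta)$-adapted or $\alpha$-adapted coordinates, so the bookkeeping you identify as the crux is a genuine undone step rather than a matter of convention-chasing.
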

\begin{proof}[Proof of \ref{itm:singleInequalityPositive}]
%\begin{enumerate}
%\item\label{itm:singleInequalityPositiveProof}
Let $A$ and $B$ be the symmetric matrices such that $\alpha$ is the graph of $A$ and $\beta$ is the graph of $B$.
Take a vector $(0,z)\in\gamma$.
We write it as a sum of a vector in $\alpha$ and a vector in $\beta$:
\[(0,z)=(x,Ax)+(y,By),\]
from which we get that $x=-y$ and $z=(A-B)x$.
Now,
\[Q[\alpha,\beta](0,z)=\omega((x,Ax),(-x,-Bx))=\]
\[\inprod{Ax}{-x}-\inprod{x}{-Bx}=\inprod{x}{(B-A)x},\]
from which it follows that $Q[\alpha,\beta]\mid_\gamma>0$ if and only if $B-A$ is positive definite, and this holds if and only if $\alpha\subspaceleq{\gamma} \beta$.
\end{proof}

%\item\label{itm:doulbeInequalityNegativeProof}
\begin{proof}[Proof of \ref{itm:doubleInequalityNegative}]
Applying a suitable linear symplectic transformation, we can assume:
\[
\gamma=\set{q=0},\quad\alpha=\set{p=0}.
\]
We write $\beta, L$ with the help of symmetric matrices
\[
\beta=\{p=Bq\}, \quad L=\{p=Wq\},\]
 where $B$ is positive definite, since $\alpha \subspaceleq{\gamma} \beta$.
Moreover, we have: 
\[
\alpha\subspaceleq{\gamma} L \subspaceleq{\gamma} \beta \iff 0<W<B.
\]
Next we compute $Q[\alpha,\beta]\mid_L.$
We split any vector 
\[(q,Wq)=(x,0)+(y,By)\,, \textrm{where}\   x+y=q,\ By=Wq.\]
We find
\[
y=B^{-1}Wq,\ x=q-B^{-1}Wq.
\]
Hence,
\begin{gather*}
\omega\left((x,0),(y,By)\right)=-\inprod{x}{By}=-\inprod{q}{Wq}+\inprod{B^{-1}Wq}{Wq}=\\
=-\inprod{W^{-1}z}{z}+\inprod{B^{-1}z}{z},\  \textrm{where}\  z:=Wq.
\end{gather*}
Thus, we see that the last expression is a negative definite quadratic form
$\iff {W^{-1}}>B^{-1}>0\iff 0<W<B.$
\end{proof}
In Section \ref{section:geometricMeaning}, we will consider homotopies of Lagrangian subspaces.
The following lemma states that the sign of the restriction of $Q[\alpha,\beta]$ to a Lagrangian subspace $\gamma$ is stable under homotopies of $\gamma$.
\begin{lemma}\label{lem:homotopyTransversalityPersistence}
Suppose that $U$ is a symplectic vector space, and that $\alpha$ and $\beta$ are transversal Lagrangian subspaces.
Suppose that $\set{\gamma_t}_{t\in[0,1]}$ is a homotopy of Lagrangian subspaces, such that for all $t\in[0,1]$, $\gamma_t$ is transversal to $\alpha$ and to $\beta$. 
If $Q[\alpha,\beta]$ is positive definite on $\gamma_0$, then it is positive definite on $\gamma_t$ for all $t\in[0,1]$.
\end{lemma}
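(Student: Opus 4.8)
The plan is to reduce the statement to a continuity/connectedness argument using the quadratic form characterization already established in Proposition \ref{prop:orderQuadraticForm}. Since $\alpha$ and $\beta$ are transversal, the space $U$ splits as $\alpha\oplus\beta$, and the quadratic form $Q[\alpha,\beta]$ is a fixed (non-degenerate) quadratic form on $U$ — in suitable symplectic coordinates it is the standard form of signature $(n,n)$. What changes with $t$ is only the subspace $\gamma_t$ on which we restrict it. The key point is that along the homotopy each $\gamma_t$ is an $n$-dimensional subspace on which $Q[\alpha,\beta]$ is non-degenerate: indeed, I claim that for a Lagrangian subspace $\gamma$ transversal to both $\alpha$ and $\beta$, the restriction $Q[\alpha,\beta]\mid_\gamma$ is always non-degenerate. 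This is the first thing I would prove.

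To see this non-degeneracy, suppose $u\in\gamma$ lies in the kernel of $Q[\alpha,\beta]\mid_\gamma$; writing $u=u_1+u_2$ with $u_1\in\alpha$, $u_2\in\beta$, the condition is that $\omega(u_1,v_2)=0$ for every $v=v_1+v_2\in\gamma$. One checks, using that $\alpha,\beta,\gamma$ are all Lagrangian and that $\gamma$ is transversal to both $\alpha$ and $\beta$, that this forces $u_1=0$ and $u_2=0$ (a convenient model: put $\alpha=\{p=0\}$, $\beta=\{q=0\}$, so $Q[\alpha,\beta](q,p)=-\inprod{q}{p}$ up to sign, and $\gamma=\{p=Aq\}$ with $A$ invertible because $\gamma$ is transversal to $\beta$; then $Q[\alpha,\beta]\mid_\gamma(q)= \pm\inprod{q}{Aq}$, which is non-degenerate precisely because $A$ is invertible and symmetric). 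Hence $\det\big(Q[\alpha,\beta]\mid_{\gamma_t}\big)\neq 0$ for all $t$.

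Now I would run the standard argument: fix a continuous family of bases of $\gamma_t$ (possible since $\{\gamma_t\}$ is a homotopy in the Lagrangian Grassmannian, hence $\gamma_t$ depends continuously on $t$), and form the symmetric matrix $M(t)$ representing $Q[\alpha,\beta]\mid_{\gamma_t}$ in that basis. Then $t\mapsto M(t)$ is continuous, each $M(t)$ is non-degenerate by the previous paragraph, so the number of negative eigenvalues of $M(t)$ is constant on $[0,1]$. Since $M(0)$ is positive definite (zero negative eigenvalues) by hypothesis, $M(t)$ is positive definite for all $t$, i.e. $Q[\alpha,\beta]$ is positive definite on $\gamma_t$ for every $t\in[0,1]$. (Equivalently, one can invoke that the set of $n$-dimensional subspaces on which a fixed non-degenerate form is positive definite is open and closed inside the set of subspaces on which it is non-degenerate, and $[0,1]$ is connected.)

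The only genuinely delicate step is the non-degeneracy claim in the second paragraph; everything after that is the routine "eigenvalues cannot cross zero" continuity argument. I would therefore spend the bulk of the write-up on establishing that $Q[\alpha,\beta]\mid_\gamma$ is non-degenerate whenever $\gamma$ is transversal to both $\alpha$ and $\beta$, being careful that this uses transversality to \emph{both} subspaces (transversality to $\beta$ alone already suffices in the model computation above, but stating it symmetrically is cleanest), and then close with the connectedness conclusion.
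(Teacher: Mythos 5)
Your proof is essentially the same as the paper's: coordinatize so that $\alpha=\set{p=0}$, $\beta=\set{q=0}$, write $\gamma_t=\set{p=A_tq}$, observe that $Q[\alpha,\beta]\mid_{\gamma_t}$ is (up to sign) $\inprod{A_tq}{q}$, and argue that definiteness persists because non-degeneracy is preserved along the homotopy. The paper phrases the persistence step as ``the first violation would force $A_{t_*}$ to have a kernel, contradicting transversality to $\alpha$,'' while you phrase it as constancy of signature for a continuous family of non-degenerate symmetric matrices over a connected parameter set; these are the same argument.

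One slip worth correcting: you assert that $A$ is invertible ``because $\gamma$ is transversal to $\beta$,'' and the parenthetical claims ``transversality to $\beta$ alone already suffices.'' In fact, transversality to $\beta=\set{q=0}$ is what allows you to write $\gamma$ as a graph $\set{p=Aq}$ in the first place; invertibility of $A$ (hence non-degeneracy of $Q[\alpha,\beta]\mid_\gamma$) is equivalent to transversality to $\alpha=\set{p=0}$, since $\ker A$ is exactly $\gamma\cap\alpha$ under the graph identification. Both transversalities are genuinely needed, and your lemma statement uses both correctly even though the parenthetical misattributes the source of invertibility.
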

\begin{proof} We apply a symplectic linear transformation to get $\alpha=\set{p=0}$, $\beta=\set{q=0}$.
 In this case, 
	$Q[\alpha,\beta]=-\inprod{p}{q}$.
	 Moreover, since $\gamma_t$ is transversal to $\beta=\set{q=0}$, we can write
	 \[\gamma_t=\{p=A_tq\}\]
	 for a symmetric matrix $A_t$.
	  Therefore, the quadratic form $Q[\alpha,\beta]|_{\gamma_t}$ is $-\inprod{A_tq}{q}$. 
	  For $t=0$, and hence also for small $t$, the matrix $A_t$ is negative definite.
	   Then this property persists, since if it is violated for the first time at some $t_*\in(0,1]$, then $A_{t_*}$ has a non-trivial kernel, meaning that $\gamma_{t_*}$ is not transversal to $\set{p=0}$.
	\end{proof}

\subsection{Wave Fronts}\label{subsection:wavefront}
In what follows, we will use the machinery of \textit{wave fronts}, which provides a convenient way to think about Lagrangian subspaces of the space of oriented lines in $\R^d$, $\mathcal{L}$. 
For completeness, we include a brief exposition of this construction.
For more details, see \cite{arnold2001singularities, SinaiChernovFormula,Chernov06chaoticbilliards, AST_2003__286__119_0,WojtkowskiMaciejP.2004HB}.

Start with $T\R^d \cong T^*\R^d \cong \R^{2d}$, equipped with coordinates $(q,p)$, and consider the Hamiltonian $H(q,p)=\frac{1}{2}|p|^2$.
Suppose that $K\subseteq T\R^d$ is a Lagrangian submanifold which is invariant under the Hamiltonian flow of $H$, lying in the energy level $\set{H=\frac{1}{2}}=\set{|p|=1}$, and suppose that $z_0=(q_0,p_0)$ is a point on $K$.
Assume further that, locally near $z_0$, the Lagrangian submanifold $K$ is a graph of a function $\set{p=f(q)},|f(q)|=1$.
The fact that $K$ is a Lagrangian submanifold then implies that $df$ is a symmetric matrix, and moreover, $T_{(q,p)}K\subseteq T_{(q,p)}T^*\R^d$ is the graph of $df$.
Consider the distribution of hyperplanes $\set{f(q)}^\perp \subseteq T_q \R^d$.
One can verify that when $df$ is symmetric then this distribution is (Frobenius) integrable.
Hence, there exists a neighborhood of $q_0$ that admits a local foliation by hypersurfaces which are tangent to this distribution.
It then also follows that for a given leaf $\Lambda$ of this distribution, $\set{(q,tf(q))\mid q\in\Lambda, t\in\R}$ is the normal bundle to $\Lambda$.
As a result, the restriction of $df$ to $\set{f(q)}^\perp$ is also the curvature operator of the leaf $\Lambda$ at $q$.
We think of the leaves of this foliation as the wave fronts of the beam of rays $q+\R f(q)$.
Suppose that $K_1$ and $K_2$ are two Lagrangian submanifolds of $T^*\R^d$ lying in the energy level $\set{|p|=1}$ that intersect at $z_0$, and locally they can be described as $K_1=\set{p=f(q)}$ and $K_2=\set{p=g(q)}$, where $|f(q)|=|g(q)|=1$.
Then the flow direction  $\R p_0$ is contained in the kernels of both $df$ and $dg$.
If $T_{z_0}K_1\cap T_{z_0}K_2 = \R p_0$, then it means that the restrictions of $df$ and $dg$ to $\set{p_0}^\perp$ do not agree on any non-trivial vector (if they agreed, then we would have a non-trivial vector in the intersection of their graphs, but the graphs of $df$ and $dg$ are $T_{z_0}K_1$ and $T_{z_0}K_2$, and their intersection is $\R p_0$).

If $K$ is not a graph near $z_0$, then we can let  the point $z_0$ flow for an arbitrary short time by the Hamiltonian flow of $H$, and near the resulting point, $K$ will be a graph.
This was proved in \cite{ArnoldV.I.1985TSta}.

The space $\mathcal{L}$ of oriented lines is obtained by symplectic reduction from $T^*\R^d$ with the Hamiltonian $H$. 
Take the level set $A=\set{H=\frac{1}{2}}=\set{|p|=1}$, and consider its quotient by the orbits of $H$.
The result is diffeomorphic to $\mathcal{L}$.
Suppose that $L\subseteq T_{\ell}\mathcal{L}$ is a Lagrangian subspace, then $L$ is a tangent space to a germ of a Lagrangian submanifold $Y$ passing through the point $\ell$.
Then, we can lift $Y$ to get a Lagrangian submanifold $K$ of $T^* \R^d$ which is invariant under the flow. 
Now we can apply the previous construction to cook foliations by wave fronts that will be orthogonal to the line $\ell$.
This foliation may fail to be defined at a finite set of points on $\ell$, where the submanifold $K$ fails to be a graph.

If we take two transversal Lagrangian subspaces $L_1, L_2$ of $T_{\ell}\mathcal{L}$, then 
the corresponding Lagrangian subspaces $T_{z_0}K_1$ and $T_{z_0}K_2$ in $T^*\R^d$ will intersect at $\R p_0$, where  $z_0=(q_0,p_0)$ and $\ell=q_0+\R p_0$. So by the previous remark, for any point $q_0\in\ell$
the curvature operators at $q_0$ of the corresponding fronts will not agree on any non-trivial vector, i.e., their difference is non-degenerate.

\section{Analysis of locally maximizing orbits}\label{section analysis}
In this section, we formulate a criterion for an orbit to be an m-orbit, and prove Theorem \ref{thm:criterionMinMax}. 
In Subsection \ref{subsection:criteriontori}, we provide a proof for twist maps of tori, and in Subsection \ref{subsection:criterioBallBundle} we formulate an analogous criterion for twist maps of ball bundles over spheres.
In Subsection \ref{subsection:examples}, we briefly mention that the results can be adapted to locally minimizing orbits, and we give two examples that relate to such orbits.
\subsection{Criterion for m-orbits}\label{subsection:criteriontori}
Let $T:T^*\T^n\to T^*\T^n$ be an exact twist map of $T^* \T^n$.
Denote by $(q,p)$ the symplectic coordinates on $T^*\T^n$, and denote by $H(q,Q)$ a generating function of $T$.
We assume that $H$ satisfies the twist condition, that is, the mixed partial derivatives matrix, $H_{12}$ is non-degenerate (see also Subsection \ref{subsection:twistmapintro}).
Denote by $\mathcal{M}$ the subset of $T^*\T^n$ that consists of all m-orbits, see Subsection \ref{subsubsection:morbits}.
Here are some important properties of this set:
\begin{proposition}\label{prop:elementaryProperties}
\begin{enumerate}\item\label{itm:lagrangianTori} The set $\mathcal{M}$ contains all globally maximizing orbits.
 If $T$ is a negative twist map, then $\mathcal{M}$ contains all orbits that lie on invariant Lagrangian graphs.
\item\label{itm:finiteSubsegment} If the matrix of the second variation of some finite segment of the configuration $\set{q_n}$ is negative semi-definite, then the matrix of the second variation for any proper sub-segment is negative definite.
In particular, a configuration $\set{q_n}$ is an m-orbit if and only if any finite segment has a negative semi-definite second variation.
\item\label{itm:closedInvariantSubset} The set $\mathcal{M}$ is a closed set, invariant under $T$.
\end{enumerate}
\end{proposition}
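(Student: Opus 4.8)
The plan is to establish part~\ref{itm:finiteSubsegment} first --- it is the structural core --- and then read off parts~\ref{itm:lagrangianTori} and~\ref{itm:closedInvariantSubset} from it; the only ingredient that is not essentially formal is the classical statement that invariant Lagrangian graphs of a negative twist map carry locally maximizing orbits.

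For part~\ref{itm:finiteSubsegment}, it is enough to show that $W_{M,N}\le 0$ forces $W_{M+1,N}<0$, and symmetrically $W_{M,N-1}<0$; iterating these two steps yields negative definiteness on every proper subsegment. Suppose there were a nonzero $\eta=(\eta_{M+1},\dots,\eta_N)$ with $\langle W_{M+1,N}\eta,\eta\rangle\ge 0$, and pad it to $\xi=(0,\eta_{M+1},\dots,\eta_N)$. In the block tridiagonal matrix~\eqref{eq:secondVariation} all terms of $\langle W_{M,N}\xi,\xi\rangle$ that involve the index $M$ vanish because they couple only to $\xi_M=0$, so $\langle W_{M,N}\xi,\xi\rangle=\langle W_{M+1,N}\eta,\eta\rangle\ge 0$; since $W_{M,N}\le 0$, this forces $W_{M,N}\xi=0$. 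The $M$-th block row of that equation reads $a_M\xi_M+b_M\xi_{M+1}=0$, hence (as $\xi_M=0$) $H_{12}(q_M,q_{M+1})\eta_{M+1}=0$, so $\eta_{M+1}=0$ by the twist condition (non-degeneracy of $H_{12}$); the $(M+1)$-st block row then gives $H_{12}(q_{M+1},q_{M+2})\eta_{M+2}=0$, and so on, forcing $\eta\equiv 0$ --- a contradiction. For the ``in particular'': every finite segment is a proper subsegment of a larger one, so if every finite segment has negative semidefinite second variation, then every finite segment in fact has negative definite second variation and hence is a strict local maximum, which is precisely the m-configuration property; the reverse implication is immediate.

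For part~\ref{itm:lagrangianTori}: a globally maximizing orbit is, on each finite segment with fixed endpoints, a local maximum of $F_{M,N}$, so $W_{M,N}\le 0$ throughout, and part~\ref{itm:finiteSubsegment} applies. For an orbit lying on an invariant Lagrangian graph $\Lambda=\{p=du(q)\}$ with $T$ a negative twist map, the first move is to pass to the coboundary-modified generating function $h(q,Q)=H(q,Q)+u(q)-u(Q)$: it is still $\Z^d$-periodic and, by invariance of $\Lambda$, satisfies $h_1(q_n,q_{n+1})=h_2(q_n,q_{n+1})=0$ along the orbit, while on a finite segment $\sum h$ and $\sum H$ differ only by the fixed-endpoint constant $u(q_{M-1})-u(q_{N+1})$, so $W_{M,N}$ is unchanged. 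The classical theory of negative/superlinear twist maps \cite{MacKayRS1989CKtf,SEDP_1987-1988____A14_0,gole2001symplectic} then yields $W_{M,N}\le 0$ on every segment --- equivalently, the linearization of the induced graph dynamics is a nonsingular matrix Jacobi field along the orbit, for which the matrices $X_n$ of Theorem~\ref{thm:criterionMinMax} become the symmetric operators $H_{11}(q_n,q_{n+1})+d^2u(q_n)$, negative definite by negativity of the twist --- and part~\ref{itm:finiteSubsegment} finishes the argument.

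For part~\ref{itm:closedInvariantSubset}: $T$-invariance holds because $\mathcal M$ is, by construction, a union of whole orbits. For closedness, let $z^{(k)}\to z$ with each $z^{(k)}\in\mathcal M$. By continuity of $T$ the orbit of $z^{(k)}$ converges pointwise to the orbit of $z$, and by continuity of the second derivatives of $H$ we get $W^{(k)}_{M,N}\to W_{M,N}$ for each finite segment; a limit of negative semidefinite matrices is negative semidefinite, so $W_{M,N}\le 0$ for all $M\le N$, and part~\ref{itm:finiteSubsegment} places $z$ on an m-orbit. All three arguments carry over verbatim to twist maps of ball bundles over spheres. The one place where something beyond formal bookkeeping is needed is the invariant-graph claim in part~\ref{itm:lagrangianTori}, which genuinely uses the sign (or superlinearity) hypothesis and the classical conjugate-point theory; parts~\ref{itm:finiteSubsegment} and~\ref{itm:closedInvariantSubset} use nothing but the non-degeneracy of $H_{12}$ and the continuity of the second variation.
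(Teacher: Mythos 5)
Your proposal is correct and follows the same skeleton as the paper's three-sentence proof --- the invariant-graph claim and item~\ref{itm:finiteSubsegment} are attributed to \cite{MacKayRS1989CKtf,SEDP_1987-1988____A14_0}, and closedness is read off item~\ref{itm:finiteSubsegment} --- but you actually supply the proof of item~\ref{itm:finiteSubsegment} that the paper only cites, and your argument (pad a vector on which $W_{M+1,N}$ is $\ge 0$ by a zero block, observe that $\langle W_{M,N}\xi,\xi\rangle = 0$ together with negative semidefiniteness forces $W_{M,N}\xi=0$, then propagate zeros down the block-tridiagonal rows using nondegeneracy of $H_{12}$) is the standard one and is correct. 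One small caveat on item~\ref{itm:lagrangianTori}: the parenthetical identifying the Jacobi-field data as $X_n=H_{11}(q_n,q_{n+1})+d^2u(q_n)$ and asserting negative definiteness ``by negativity of the twist'' overstates what the sign hypothesis gives on its own. The formula is right (it is Lemma~\ref{lem:recursionAndInvariance} applied to the tangent spaces of the invariant graph), but its negativity is not an algebraic consequence of the sign of $H_{12}+H_{21}$; it is rather a consequence of the Herman/MacKay--Meiss--Stark result that $W_{M,N}\le 0$ on an invariant graph, transported through Theorem~\ref{thm:criterionMinMax} --- and since the converse direction of Theorem~\ref{thm:criterionMinMax} itself invokes Proposition~\ref{prop:elementaryProperties}, leaning on it here risks circularity. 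Fortunately this is only a digression: your operative deduction, classical theory $\Rightarrow W_{M,N}\le 0 \Rightarrow$ item~\ref{itm:finiteSubsegment}, is the paper's and is sound.
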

\begin{proof}
The first claim in item (\ref{itm:lagrangianTori}) is obvious. The fact that any orbit on a Lagrangian graph of a negative twist map is (globally) maximizing was proved by M. Herman \cite{MacKayRS1989CKtf,SEDP_1987-1988____A14_0}.
	 The claim in item \ref{itm:finiteSubsegment} was also proved in \cite{MacKayRS1989CKtf}.
	  It then follows from item (\ref{itm:finiteSubsegment}) that $\mathcal{M}$ is a closed set.
\end{proof}
\begin{definition}\label{def:JacobiField}
 A \textit{Jacobi field} along the configuration $\set{q_n}$ is a sequence of vectors $\set{\delta q_n}$ satisfying the discrete Jacobi equation:
\[b_{n-1}^T \delta q_{n-1}+a_n \delta q_n + b_n \delta q_{n+1}=0,\]
where, as in Subsection \ref{subsubsection:morbits}, the matrices $a_n$, $b_n$ are defined by:
\[a_n=H_{22}(q_{n-1},q_n)+H_{11}(q_n,q_{n+1}),b_n=H_{12}(q_n,q_{n+1}).\]
\end{definition}
In what follows, we consider the \textit{discrete matrix Jacobi equation} on the square matrices $\set{\xi_n}$, for $n\in\Z$:
\[b_{n-1}^T \xi_{n-1}+a_n \xi_n +b_n \xi_{n+1}=0.\]
We will need the following lemmas, which are based on ideas in \cite{BialyML2004Stmw}.
The first one can be seen as a higher dimensional analogue of the discrete Sturm comparison theorem \cite[Theorem 7.9]{ElaydiSaber1999Aitd}.
\begin{lemma}\label{lem:JacobiFieldLemma} Suppose that there exists a matrix Jacobi field $J_n$ of non-singular matrices along $\set{q_n}$, such that all the matrices $X_n:=-b_nJ_{n+1}J_n\inv$ are symmetric negative definite.
		Then, for any $k\in\Z$, the matrix Jacobi field $\xi^{(k)}_n$ satisfying the initial conditions $\xi^{(k)}_{k-1}=0,\  \xi^{(k)}_{k}=I $  is non-singular for all $n\geq k$.
		 Moreover, the matrices
		\[
		A^{(k)}_n:=-b_n\xi^{(k)}_{n+1}[\xi^{(k)}_n]^{-1},\]
	are all symmetric negative definite for $n\geq k$.
\end{lemma}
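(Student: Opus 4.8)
The plan is to convert the matrix Jacobi equation into a discrete matrix Riccati recursion and then run a Sturm-type comparison against the given field $J_n$. Fix $k$ and abbreviate $\xi_n=\xi_n^{(k)}$ and $A_n=-b_n\xi_{n+1}\xi_n\inv$ (the latter defined whenever $\xi_n$ is invertible). First I would record the elementary fact that for \emph{any} matrix Jacobi field $\zeta_n$ with $\zeta_{n-1},\zeta_n$ invertible, the matrix $Y_n=-b_n\zeta_{n+1}\zeta_n\inv$ satisfies
\[Y_n=a_n-b_{n-1}^{T}Y_{n-1}\inv b_{n-1};\]
this comes from solving the Jacobi equation at index $n$ for $b_n\zeta_{n+1}$, multiplying on the right by $\zeta_n\inv$, and substituting $\zeta_{n-1}\zeta_n\inv=-Y_{n-1}\inv b_{n-1}$ (valid since $b_{n-1}$ is invertible by the twist condition and $Y_{n-1}$ is invertible because $\zeta_n$ is). Applying this to $\zeta_n=J_n$ — legitimate for all $n$, since the $J_n$ are invertible by hypothesis and the $X_n$ are negative definite, hence invertible — gives
\[X_n=a_n-b_{n-1}^{T}X_{n-1}\inv b_{n-1},\qquad\text{i.e.}\qquad a_n=X_n+b_{n-1}^{T}X_{n-1}\inv b_{n-1}.\]
Since $X_{n-1}\inv$ is negative definite and $b_{n-1}$ invertible, the last term is negative definite, so $a_n<X_n<0$ for every $n$ (here ``$<$'' is the usual order on symmetric matrices).

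Next I would prove the lemma by induction on $n\geq k$, carrying the stronger statement: $\xi_n$ is invertible, $A_n$ is symmetric, and $A_n<X_n$ (which, with $X_n<0$, forces $A_n$ to be negative definite, in particular invertible). The base case $n=k$ is immediate: $\xi_k=I$, and the Jacobi equation at index $k$ together with $\xi_{k-1}=0$ gives $b_k\xi_{k+1}=-a_k$, hence $A_k=a_k$, and we have just seen $a_k<X_k<0$. For the inductive step, from $A_n=-b_n\xi_{n+1}\xi_n\inv$ and the invertibility of $b_n$, $A_n$, $\xi_n$ one solves $\xi_{n+1}=-b_n\inv A_n\xi_n$, which is invertible; then the Riccati recursion applies at index $n+1$ and yields $A_{n+1}=a_{n+1}-b_n^{T}A_n\inv b_n$, which is manifestly symmetric, together with
\[X_{n+1}-A_{n+1}=b_n^{T}\left(A_n\inv-X_n\inv\right)b_n.\]
From $A_n<X_n<0$ and the order-reversing property of inversion on negative definite matrices ($A_n<X_n$ implies $X_n\inv<A_n\inv$, so $A_n\inv-X_n\inv>0$), and since conjugating by the invertible $b_n$ preserves positive definiteness, we get $A_{n+1}<X_{n+1}$, closing the induction.

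I do not anticipate a genuine obstacle: the core is bookkeeping with the Jacobi equation plus the elementary monotonicity of matrix inversion. The one place that needs real attention is the base case, where $A_k=a_k$ is \emph{not} negative definite on its own — its negative definiteness is precisely the content of the hypothesis on $J_n$, extracted via the Riccati identity for $X$; this is what makes the lemma a genuine comparison statement rather than a tautology. Minor care is also needed to keep the transposes straight ($b_n$ versus $b_n^{*}=b_n^{T}$) when deriving the recursion. Finally, the same argument goes through verbatim in the ball-bundle setting of Subsection \ref{subsection:criterioBallBundle}, reading $a_n,b_n,X_n,A_n$ as operators between the relevant tangent spaces and replacing ``symmetric'' by ``self-adjoint''.
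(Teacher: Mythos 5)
Your proof is correct and follows essentially the same strategy as the paper: derive the discrete Riccati recursion for both $X_n$ and $A_n^{(k)}$, observe $a_n < X_n < 0$, and then propagate the comparison $A_n^{(k)} < X_n$ forward from the base case $A_k^{(k)} = a_k$. The only differences are structural: the paper argues by contradiction (assuming a first singular $\xi_{p+1}^{(k)}$ and showing $A_p^{(k)}$ would then have to be both singular and negative definite) and cites the monotonicity of the Riccati recursion to a reference, whereas you run a direct induction and verify the monotonicity explicitly via the identity $X_{n+1}-A_{n+1}=b_n^{T}\bigl(A_n^{-1}-X_n^{-1}\bigr)b_n$ together with order-reversal of inversion on negative definite matrices. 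Your version is a bit more self-contained but not a genuinely different route.
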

\begin{proof}
The sequence $\set{J_n}$ is a solution to the matrix Jacobi equation.
 Therefore, one can compute that the sequence $\set{X_n}$ satisfies the  recursion formula
\[X_{n+1}=a_{n+1}-b_n^TX_n^{-1}b_n.\]
Using the fact that $X_n$ are negative definite, we can conclude that
\begin{gather}\label{eq:anxn}
\forall n\in\Z,\quad X_n > a_n,
\end{gather}
	which means in particular that all $a_n$ are negative definite.
	
	For every $k$, consider the matrix Jacobi field $\set{\xi^{(k)}_n}_{n\in\Z}$ which satisfies the initial conditions 
	\[\xi^{(k)}_{k-1}=0,\  \xi^{(k)}_{k}=I.\]
	We need to show that $\xi^{(k)}_n$ are all non-singular for $n\geq k$.
	 We argue by contradiction.
	  Suppose for some $p\geq k$ the matrices $\xi^{(k)}_n$ are all non-singular when $n\in [k,p]$	while $\xi^{(k)}_{p+1}$ is singular.
	In this case, we can define for every $n\in[k,p]$
\[A^{(k)}_n:=-b_n\xi^{(k)}_{n+1}[\xi^{(k)}_n]\inv.\]
	Notice that for all $n\in[k,p-1]$, all matrices $A^{(k)}_n$ are non-singular, except the last one, 
	$A^{(k)}_{p}$ which is a singular matrix. 
	The recursion formula for $A^{(k)}_{n}$, which is the same as for $X_n$, is valid for $n\in[k,p-1]$:
\[A^{(k)}_{n+1}=a_{n+1}-b_n^T[ A^{(k)}_n]^{-1} b_n,\]
We easily compute from the definition that 
\[A^{(k)}_k=a_k,\]
 and from the recurrence relation we conclude that all matrices $A^{(k)}_n$ are symmetric for $n\in[k,p]$.
Hence, we get from (\ref{eq:anxn}):
\begin{gather*}
X_k > A_k ^{(k)}
\end{gather*}
Therefore, the monotonicity of the recurrence relation (see, e.g., \cite{MacKayRS1989CKtf}) implies that the inequality persists:
\begin{gather*}
X_{k+1}>A^{(k)}_{k+1}\ ,\dots\  X_{p-1}>A^{(k)}_{p-1},\ X_{p}>A^{(k)}_{p}.
\end{gather*}
This implies that all matrices $A^{(k)}_{n}, n\in[k,p]$ are negative definite, but this contradicts the fact that $A^{(k)}_{p}$ is singular.
This completes the proof of the lemma.
\end{proof}
\begin{lemma}\label{lem:BialyMackayGeneralization}
Suppose that $\set{q_n}$ is an m-configuration. 
Fix an integer $k$, and suppose that $\set{\xi_n^{({k})}}_{n\in\Z}$ is a matrix solution to the Jacobi equation with $\xi_{k-1}^{(k)}=0$, $\xi_{k}^{(k)}=I$.
Then $\xi_n^{(k)}$ is invertible for all $n\geq k$.
 \end{lemma}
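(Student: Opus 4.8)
The plan is to argue by contradiction, using the standard dictionary between singular values of the matrix Jacobi field and ``conjugate points'' of the discrete variational problem. Suppose that $\xi_p\powbra{k}$ is singular for some $p\geq k+1$ (recall $\xi_k\powbra{k}=I$ is invertible, so the first candidate index is $p=k+1$). Pick a nonzero vector $v$ in the kernel of $\xi_p\powbra{k}$ and set $\delta q_n:=\xi_n\powbra{k}v$. Since $\set{\xi_n\powbra{k}}$ solves the matrix Jacobi equation, $\set{\delta q_n}$ is a (vector) Jacobi field along $\set{q_n}$ in the sense of Definition \ref{def:JacobiField}, and by construction $\delta q_{k-1}=0$, $\delta q_k=v\neq 0$, and $\delta q_p=0$.

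The next step is to read off that the finite tuple $(\delta q_k,\dots,\delta q_{p-1})$ lies in the kernel of the second variation $W_{k,p-1}$ from \eqref{eq:secondVariation}. Indeed, the rows of the block-tridiagonal matrix $W_{k,p-1}$ applied to this tuple are exactly the Jacobi equations $b_{n-1}^T\delta q_{n-1}+a_n\delta q_n+b_n\delta q_{n+1}=0$ for $n=k,\dots,p-1$, in which the boundary contributions use precisely $\delta q_{k-1}=0$ and $\delta q_p=0$. This is why the relevant truncated functional is $F_{k,p-1}$ (whose fixed endpoints sit at indices $k-1$ and $p$), and not a segment shifted by one. The tuple is nonzero because its first block $\delta q_k=v$ is nonzero, so $W_{k,p-1}$ is degenerate and, in particular, not negative definite. (For $p=k+1$ this says simply $a_kv=0$, consistent with $W_{k,k}=a_k$.)

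Finally I would extract the contradiction from the m-property. Since $\set{q_n}$ is an m-configuration, Proposition \ref{prop:elementaryProperties}, item \ref{itm:finiteSubsegment}, tells us that every finite segment has negative semi-definite second variation; in particular $W_{k,p}$ is negative semi-definite. But $W_{k,p-1}$ is a proper sub-segment of $W_{k,p}$, so the same item forces $W_{k,p-1}$ to be negative definite, contradicting the previous paragraph. Hence $\xi_n\powbra{k}$ is invertible for all $n\geq k$.

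I do not anticipate a genuine obstacle here: the statement is the discrete analogue of the fact that a (locally) maximizing extremal has no interior conjugate points, and Proposition \ref{prop:elementaryProperties} packages exactly the semi-definiteness bookkeeping that we need. The only points requiring care are the indexing — choosing $W_{k,p-1}$ and then enlarging to $W_{k,p}$ so that the vanishing of the Jacobi field at $k-1$ and $p$ matches the fixed endpoints of the functionals — and confirming that the constructed kernel vector is genuinely nonzero, which relies on the normalization $\xi_k\powbra{k}=I$. The twist condition (non-degeneracy of $b_n$) enters implicitly: it is what makes ``$\delta q_k\neq 0$'' equivalent to the Jacobi field being nontrivial, and what rules out the shifted segment $W_{k-1,p-1}$, where $\delta q_{k-1}=0$ would force $b_{k-1}v=0$.
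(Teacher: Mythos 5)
Your proof is correct and follows essentially the same route as the paper's: take $v$ in the kernel of $\xi_p^{(k)}$, observe that $w_n=\xi_n^{(k)}v$ is a Jacobi field vanishing at $n=k-1$ and $n=p$, conclude that $(w_k,\dots,w_{p-1})$ lies in the kernel of $W_{k,p-1}$, and contradict the (strict) negative definiteness of $W_{k,p-1}$ guaranteed by the m-configuration property. The only stylistic difference is that you explicitly go through $W_{k,p}$ and invoke Proposition \ref{prop:elementaryProperties}\eqref{itm:finiteSubsegment} to upgrade semi-definiteness to strict definiteness on the proper sub-segment, whereas the paper states that definiteness of $W_{k,p-1}$ more tersely; both amount to the same use of that proposition.
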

 \begin{proof}
 For every vector $v$, the sequence $\{w_n:=\xi_n^{(k)}v, n\in\Z\}$ is a Jacobi field.
 Suppose that $v$ is a vector such that $\xi_p^{(k)}v=0$ for some $p>k$.
 Then we have $w_{k-1}=w_p=0,$ and additionally $ w_{k}=v$.
 Since $\set{q_n}$ is an m-configuration, then, in particular, the matrix of the second variation $W_{k,p-1}=\delta^2 F_{k,p-1}$ is negative definite.
 On the other hand,  the second variation computed on the variation
 $w=(w_{k},...,w_{p-1})$ obviously vanishes, because it follows from formula (\ref{eq:secondVariation}) that $W_{k,p-1}w^T=0$, since $w_n$ is a Jacobi field vanishing at $n=k-1,p$.
  Hence, $v$ must be $0$ and the matrix $\xi_n^{(k)}$ is indeed invertible.
 %$(0,v,J_{k+2}^{(k)}v,...,J_n^{(k)}v=0)$, we get:
 %\begin{multline}\label{eq:evaluateSecondVariation}
 %\delta^2 F_{k-1,n+1}(0,v,J_{k+2}^{(k)}v,...,0)=\inprod{a_k\cdot 0 %+b_k\cdot v}{0}+\inprod{b_k^T\cdot 0 + a_{k+1}\cdot v+b_{k+1}\cdot %J_{k+2}^{(k)}\cdot v}{v}+\\
 %...+\inprod{b_{n-1}^T T_{n-1}^{(k)}\cdot v+a_n\cdot J_n^{(k)}\cdot %v}{0}=0+...+0=0.
% \end{multline}
 %This is because $J_n^{(k)}$ is a matrix solution to the Jacobi %equation, so in each summand, except for the first and the last one, the first argument in the inner product is zero.
 %From the definiteness of $\delta^2 F_{k-1,n+1}$ it must follow that $(0,v,....)=0$, and hence $v=0$, so the matrix $J_n^{(k)}$ is indeed invertible.
 \end{proof}
We are now ready to prove Theorem \ref{thm:criterionMinMax}.
\begin{proof}[Proof of Theorem \ref{thm:criterionMinMax}]
The proof of the implication $\Longrightarrow$ can be found in \cite{BialyML2004Stmw} using Lemma \ref{lem:BialyMackayGeneralization}.
 For completeness, we will repeat the arguments here.

Suppose that $\set{q_n}$ is an m-orbit.
Fix an arbitrary $k\in \Z$.
Consider the matrix solution ${\xi_n^{(k)}, n\in\Z}$ to the Jacobi equation with the initial conditions $\xi_{k-1}^{(k)}=0$, $\xi_{k}^{(k)}=I$.
It follows from Lemma \ref{lem:BialyMackayGeneralization} that for all $n\geq k$ the matrices $A_n^{(k)}=-b_n\xi_{n+1}^{(k)}\Big(\xi_n^{(k)}\Big)\inv$ are well-defined. 
It is straightforward to verify that these matrices satisfy $$A_{k}^{(k)}=a_{k}\  {\rm and}\  A_{n+1}^{(k)}=a_{n+1}-b_n^T\Big(A_n^{(k)}\Big)\inv b_n, \forall n\geq k.$$
Since $a_n$ are symmetric, it follows that $A_n^{(k)}$ are also symmetric.

We need to show that these matrices are all negative definite. 
Take $\eta\in\R^d$ to be an arbitrary vector, and $m\geq k$ to be any integer.
For $n\geq k-1$, we set $\eta_n=\xi_n^{(k)}\Big(\xi_m^{(k)}\Big)\inv\eta$.
These vectors are obtained by multiplying the constant vector $\Big(\xi_m^{(k)}\Big)\inv \eta$ by a solution to the matrix Jacobi equation, so the result satisfies the vector Jacobi equation.
In addition, it holds that $\eta_{k-1}=0$, since $\xi_{k-1}^{(k)}=0$, and also, $\eta_m=\eta$.
Therefore, from formula  (\ref{eq:secondVariation}) we have, 
%a similar computation to the one of %\eqref{eq:evaluateSecondVariation} gives
\[\delta^2 F_{k,m}(\eta_{k},...,\eta_{m})=-\inprod{b_m\cdot\eta_{m+1}}{\eta_m}=\inprod{A_m^{(k)}\eta}{\eta}.\]
%\inprod{a_k\cdot\eta_k+b_k\cdot\eta_{k+1}}{\eta_k}+\inprod{b_{n-1}^T\cdot\eta_{n-1}+a_n\cdot\eta_n}{\eta_n}.\]
%Since $\eta_m$ is a solution to the Jacobi equation, then %$b_{n-1}^T\cdot\eta_{n-1}+a_n\cdot\eta_n=-b_n\eta_{n+1}$.
%The result is that
%\[\delta^2 F_{k-1,n+1}(\eta_k,...,\eta_n)=\inprod{-b_n\cdot %J_{n+1}^{(k)}\Big(J_n^{(k)}\Big)\inv\eta_n}{\eta_n}.\]
So we conclude that
the negative definiteness of $A_m^{(k)}$ follows from that of $\delta^2 F_{k,m}$.

Next, we claim that for fixed $n$, the sequence $A_n^{(k)}$ is monotone in $k\leq n$.
To show this, fixing $k$, we prove by induction on $n\geq k$ that $A_n^{(k)} - A_n^{(k-1)}$ is negative definite.
For $n=k$, we have $A_{k}^{(k)}=a_{k}$ and $A_{k}^{(k-1)}=a_{k}-b_k^T\Big(A_{k-1}^{(k-1)}\Big)\inv b_k$, so then we get that $A_{k}^{(k)}-A_{k}^{(k-1)}=b_k^T\Big(A_{k-1}^{(k-1)}\Big)\inv b_k$, which is indeed negative definite.
For the induction step, if $A_n^{(k)}-A_n^{(k-1)}$ is negative definite, then:
\[A_{n+1}^{(k)}-A_{n+1}^{(k-1)}=-b_n^T\bigg(\Big(A_n^{(k)}\Big)\inv-\Big(A_n^{(k-1)}\Big)\inv\bigg)b_n,\]
and since $A_n^{(k)}$, $A_n^{(k-1)}$ and $A_n^{(k)}-A_n^{(k-1)}$ are also negative definite, then it follows that $A_{n+1}^{(k)}-A_{n+1}^{(k-1)}$ is also negative definite. 

As a result, for fixed $n$, the sequence $A_n^{(k)}$ is a sequence of negative  definite matrices, and it increases as $k$ decreases to $-\infty$, so it has a limit, $X_n$. 
The limit $X_n$ is negative semi-definite, but it must also be non-degenerate, since for all $k<n$ we have the recursive relation
\[A_{n+1}^{(k)}=a_{n+1}-b_n^T\Big(A_n^{(k)}\Big)\inv b_n,\]
which is equivalent to the following recursive relation that does not involve inverses of $A_n^{(k)},$
\[(A_{n+1}^{(k)}b_n\inv-a_{n+1}b_n\inv)A_n^{(k)}=-b_n^T.\]
Thus, the limit sequence $X_n$ also satisfies this relation, from which it follows that $X_n$ is non-degenerate (since so is $b_n$).
If we set $J_0$ to be an arbitrary invertible matrix, then we can define a sequence $\set{J_n}$ recursively by:
\begin{equation}\label{eq:JacobiFieldFromX}
\begin{cases}J_{n+1}=-b_n\inv X_n J_n,\quad n\geq 0,\\
J_n=-X_n\inv b_n J_{n+1},\quad n<0.\end{cases}\end{equation}
Then $\set{J_n}$ is a sequence of invertible matrices that solve the matrix Jacobi equation, since the recurrence relation holds for $X_n$, and $X_n=-b_n J_{n+1}J_n\inv$ is negative definite for all $n\in\Z$, as required.

Now we prove the converse statement.
Assuming that there exists a Jacobi field as in the formulation of the theorem, we prove, for example, that the matrix $W_{1N}$ is negative semi-definite (by using the notation of \eqref{eq:secondVariation}).
By item (\ref{itm:finiteSubsegment}) of Proposition \ref{prop:elementaryProperties}, this will show that $\set{q_n}$ is an m-orbit. 
We consider the matrices $A_n^{(k)}$ from Lemma \ref{lem:JacobiFieldLemma}.
The assumption of the theorem implies that the lemma can be used, so those matrices are all negative definite.
We evaluate the quadratic form $W_{1N}$ on a vector $u=(u_1,...,u_N)$ (here $u_i\in \R^n$ and $u\in\R^{nN}$) using the trick of \cite{MacKayRS1989CKtf} (this identity can be verified by induction on $N$):
\begin{gather*}
u^T W_{1N} u = \bigg(u_1 +\Big(A_1^{(1)}\Big)\inv b_1 u_2\bigg)^T A_1 ^{(1)} \bigg(u_1+\Big(A_1^{(1)}\Big)\inv b_1 u_2\bigg) + \\
+\bigg(u_2+\Big(A_2^{(1)}\Big)\inv b_2 u_3\bigg)^T A_2^{(1)}\bigg(u_2+\Big(A_2^{(1)}\Big)\inv b_2 u_3\bigg)+...+\\
+u_N^TA_N^{(1)}u_N.
\end{gather*}
Since all the matrices $A_n^{(1)}$ are negative definite, all sumands in this sum are negative, and hence, $W_{1N}$ is negative definite. 
Thus, $\set{q_n}$ is an m-orbit.
\end{proof}

\subsection{Adaptation to ball bundles over spheres}\label{subsection:criterioBallBundle}
Now we repeat the previous definitions and arguments, but for twist maps of a ball bundle $M\subseteq T^*N$, where $N\subseteq \R^d$ is a submanifold  diffeomorphic to $\Sph^{d-1}$,
for the definitions, see Subsection \ref{subsubsection:twistballbundle}, and Definition \ref{def:twistMapBallBundle}.
In this setting, we can also consider the discrete Jacobi equation (see Definition \ref{def:JacobiField}).
For a configuration $\set{q_n}$ of the map $T$ with generating function $H$, we can consider a Jacobi vector field $\set{\delta q_n}$, for $\delta q_n \in T_{q_n}N$ which satisfies the equation:
\[b_{n-1}^*\delta q_{n-1}+a_n\delta q_n+b_n\delta q_{n+1}=0,\]
where the operators $b_n=H_{12}(q_n,q_{n+1}), a_n=H_{11}(q_n,q_{n+1})+H_{22}(q_{n-1},q_n)$, and $b_{n-1}^* = H_{21}$ is the dual operator.

We will also consider \textit{an operator discrete Jacobi equation}: for a fixed $d-1$ dimensional vector space $E$, we consider maps $J_n:E\to T_{q_n}N$, and we look for maps that satisfy the equation
\[b_{n-1}^*\circ J_{n-1}+a_n\circ J_n + b_n\circ J_{n+1}=0.\]
With this definition, the proofs of Lemmas \ref{lem:JacobiFieldLemma}, \ref{lem:BialyMackayGeneralization} and Theorem \ref{thm:criterionMinMax} generalize to this case with some small adjustments:
\begin{enumerate}
\item\label{itm:xiknMap} In Lemma \ref{lem:JacobiFieldLemma} and Lemma \ref{lem:BialyMackayGeneralization}, one considers $\xi_n^{(k)}$ as a linear map from $T_{q_k}N\to T_{q_n}N$.
\item\label{itm:AnkXnmap} With this adjustment, the matrices $A_n^{(k)}$ and $X_n$ now correspond to linear endomorphisms of $T_{q_n}N$.
If we equip $N$ with the induced Riemannian metric from $\R^d$, then discussing the self-adjointness and negative definiteness of those operators is meaningful.
\item\label{itm:normalCoord}
% The Riemannian metric on $N$ induces a Riemannian metric on $M$, the Sasaki metric.
Since all of our considerations are pointwise, we can always choose to work with normal local coordinates around each point $q_0$ of $N$.
In these coordinates, the Riemannian metric on $N$ at $q_0$ is the Euclidean metric.
These coordinates naturally induce coordinates $(q,p)$ on $M$ near the point $(q_0,p_0)$. 
It holds that $\set{(dq,dp)}$ are Darboux coordinates on $T_{(q_0,p_0)}M$.
%These coordinates naturally induce coordinates $(q,p)$ on $M$ near a point $(q_0,p_0)$, for which the subspaces $\set{dq=0}$, $\set{dp=0}$ of $T_{(q_0,p_0)}M$ are orthogonal.
%Moreover, the Riemannian metric in these coordinates is given by the Euclidean product, and the Christoffel symbols at the point $(q_0,p_0)$ vanish.
%In addition, $\set{(dq,dp)}$ are also Darboux coordinates of $T_{(q_0,p_0)}TM$.
\item\label{itm:nowallGood} From that point on, by representing all operators according to these coordinates, we get matrices, and the claims of the previous section follow through to this case, verbatim. 
\end{enumerate}
Thus, we can state the following theorem, which is a generalization of Theorem \ref{thm:criterionMinMax}, for both: twist maps of cotangent bundles of a torus, and twist maps of ball bundles over spheres, and work in a unified way.
\begin{theorem}\label{prop:mOrbitCriterioBallBundle}
Suppose that $N$ is either $\T^{d-1}$ or diffeomorphic to $\mathbb{S}^{d-1}$, and $M$ is either $T^* \T^{d-1}$ or a ball bundle over $N$.
Suppose that $T$ is an exact twist map of $M$ with a generating function $H$.
Then a sequence $\set{(q_n,p_n)}$ is an m-orbit of $T$ if and only if there exists an operator Jacobi field $J_n: E\to T_{q_n} N$ of non-degenerate operators, for which $X_n = -b_n J_{n+1} J_n\inv$ are all negative definite endomorphisms of $T_{q_n}N$ (here $E$ is an arbitrary $d-1$ dimensional vector space).
\end{theorem}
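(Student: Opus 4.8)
The plan is to reduce Theorem \ref{prop:mOrbitCriterioBallBundle} to Theorem \ref{thm:criterionMinMax} by a purely local argument along a single orbit. In the case $N=\T^{d-1}$, $M=T^*\T^{d-1}$ the statement is literally Theorem \ref{thm:criterionMinMax}, so the content is the ball-bundle case. Here the key observation is that every object in the statement — the generating function restricted to consecutive pairs, the operators $a_n,b_n$, the second variation \eqref{eq:secondVariation}, the Jacobi field $J_n$, the operators $X_n$ — is attached to a single point $q_n\in N$ or to a consecutive pair $(q_n,q_{n+1})$, and consecutive orbit points are automatically distinct: if $T(x,u)=(y,v)$ with $u$ interior to the fiber $N_x$, then $y=\pi(T(x,u))\in N\setminus\{x\}$ by the twist condition, so $H$ is smooth near every $(q_n,q_{n+1})$. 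Hence each truncation $F_{MN}$ is a genuine smooth function on $N^{N-M+1}$, the configuration is a critical point of it with the endpoints fixed, and the Hessian of $F_{MN}$ at the critical configuration is an \emph{intrinsic} symmetric bilinear form on $\bigoplus_n T_{q_n}N$, with diagonal block $a_n=H_{11}(q_n,q_{n+1})+H_{22}(q_{n-1},q_n)$ and off-diagonal block $b_n=H_{12}(q_n,q_{n+1})$. Consequently $X_n=-b_nJ_{n+1}J_n\inv$ is an intrinsically defined endomorphism of $T_{q_n}N$, whose self-adjointness and negative definiteness relative to the induced Riemannian metric is a coordinate-free notion; the choice of the auxiliary vector space $E$ is immaterial, since precomposing all the $J_n$ with a fixed isomorphism $E'\to E$ leaves every $X_n$ unchanged.

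Next I would, at each $q_n$, pass to Riemannian normal coordinates centered at $q_n$; as in item \ref{itm:normalCoord} these induce Darboux coordinates $(q,p)$ on $M$ near $(q_n,p_n)$, so that $(dq,dp)$ is a symplectic frame of $T_{(q_n,p_n)}M$. In such coordinates the metric at $q_n$ is the identity matrix, so the dual operator $b_n^*$ is literally transposition and negative definiteness has its naive meaning; and because the configuration is critical, the Christoffel correction terms in the Hessian drop out, so the Hessian of $F_{MN}$ is exactly the block-tridiagonal matrix $W_{MN}$ of \eqref{eq:secondVariation}. With this dictionary the whole of Section \ref{section analysis} transcribes without change: the recursion $A^{(k)}_k=a_k$, $A^{(k)}_{n+1}=a_{n+1}-b_n^*(A^{(k)}_n)\inv b_n$; Lemmas \ref{lem:JacobiFieldLemma} and \ref{lem:BialyMackayGeneralization}, with $\xi^{(k)}_n$ read as a linear map $T_{q_k}N\to T_{q_n}N$ and $A^{(k)}_n$, $X_n$ as endomorphisms of $T_{q_n}N$; for the direction $\Longrightarrow$, the construction of $X_n=\lim_{k\to-\infty}A^{(k)}_n$, its non-degeneracy from the inverse-free form of the recursion, and the definition of $J_n$ via \eqref{eq:JacobiFieldFromX}; and for the direction $\Longleftarrow$, the application of Lemma \ref{lem:JacobiFieldLemma} followed by the identity of \cite{MacKayRS1989CKtf} used in the proof of Theorem \ref{thm:criterionMinMax} to write $u^T W_{1N}u$ as a sum of negative terms, so that every truncation of the second variation is negative semi-definite and Proposition \ref{prop:elementaryProperties}\ref{itm:finiteSubsegment} identifies the orbit as an m-orbit. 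One should first note that Proposition \ref{prop:elementaryProperties} itself carries over to this setting, since the proof of item \ref{itm:finiteSubsegment} uses only the block structure of \eqref{eq:secondVariation}.

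The only genuine care is needed in the reduction step: one must arrange that all three ``flat'' normalizations hold simultaneously at each $q_n$ — the block-tridiagonal form of the Hessian of $F_{MN}$, the metric equal to the identity (so that $b_n^*$ is the transpose), and the naive meaning of definiteness for $X_n$ — which is precisely what normal coordinates achieve. Everything else, in particular the algebraic core of Theorem \ref{thm:criterionMinMax}, is already in place, and the global topology of $N$ (torus versus sphere) plays no role because the criterion concerns a single orbit. I therefore expect the proof to be short: a careful statement of the localization, a check that the induced coordinates on $M$ are Darboux and that $a_n,b_n$ are intrinsic, and a pointer to Section \ref{section analysis}.
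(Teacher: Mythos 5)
Your proposal is correct and takes essentially the same approach as the paper: Subsection \ref{subsection:criterioBallBundle} reads $\xi_n^{(k)},A_n^{(k)},X_n$ as linear operators between tangent spaces, equips $N$ with the induced Riemannian metric, and passes to normal coordinates at each $q_n$ --- which simultaneously flatten the metric and give Darboux coordinates on $M$ --- so that Section \ref{section analysis} applies verbatim. You make explicit a few checks (smoothness of $H$ near $(q_n,q_{n+1})$ via the twist condition, the intrinsic nature of the Hessian at a critical configuration, and the immateriality of the choice of $E$) that the paper leaves implicit, but these do not alter the route.
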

\subsection{Minimizing Orbits}\label{subsection:examples}
If one inspects carefully the proofs from Subsection \ref{subsection:criteriontori}, then it can be seen that a version of Theorem \ref{thm:criterionMinMax} can be derived for locally minimizing orbits.
In this case, the condition is the existence of a Jacobi field $J_n$, for which the symmetric matrix $X_n$ will be positive definite. 
In the case of positive twist maps of a two-dimensional cylinder, this results in the existence of Jacobi field of constant sign, as was discussed in \cite{Bialy1993, Michael2012Hrfc, MacKayR.S1985CKTa, bialy2022locally}.
However, in the case of a negative twist maps in two dimensions, this results in a Jacobi field with alternating signs.
Here are two examples of this phenomenon.
%The criterion formulated in \textcolor{red}{something} relates only to m-orbits for twist maps of the correct sign.
%In higher dimensions, there is no sign to consider, so the criterion of Theorem \ref{thm:criterionMinMax} applies to new cases.
% We present two examples. \textcolor{red}{maybe this section can be merged with the remarks for minimal orbits.}
%\begin{enumerate}
\subsubsection{Standard twist map}\label{itm:standardMapExample} Consider an exact symplectic map of the cylinder with the generating function
\begin{equation*}
H(q,Q)=-\left(\frac{1}{2}(q-Q)^2+V(q)\right),\quad 
\begin{cases}
	P=p-V'(q)\\
	Q=q-p+V'(q)
		\end{cases},
	\end{equation*}
where $V$ is $2\pi$-periodic function. 
Notice that by replacing $p$ with $-p$ we get the usual standard-like map.
Suppose that $V$ has local maxima at the points $2\pi i/N$, $i=1,\dots,N$, and $N>1$ is some positive integer.
 Then the mapping $T$ has an $N$-periodic orbit: $(q=0, p=2\pi/N)$. 
 The orbit of this point has the matrix of the second variation \eqref{eq:secondVariation} with the entries
\[b_i=-1, a_i=-2-V''(q_i).\]
Therefore, if the maxima points of $V$ are steep enough, namely $V''(\frac{2\pi i}{N})<-4$, then this orbit is locally minimizing between any two points.
Since the mixed partial derivative of $H$ is negative, this means that the corresponding Jacobi field will have alternating signs.
\subsubsection{Billiards in polygons}\label{itm:billiardPolygonExample}
Consider a billiard motion in any convex polygon $\gamma$. 
Then any orbit is locally minimizing. 
Indeed, this orbit has  a Jacobi field with an alternating sign.
 This field can be constructed as follows.
  Take the coordinates $(s, \delta)$, where $s$ is the arc length parameter on the boundary of the table, and $\delta$ is the angle from the tangent.
  Consider an orbit $(s_n,\delta_n)$, and a tangent vector $\frac{\partial}{\partial s}\in T_{\gamma(s_0)}\gamma$.
Then this vector is transformed by the billiard map to a vector which is of the form $\lambda\frac{\partial}{\partial s}$, where $\lambda$ is a negative factor. 
Iterating, one gets alternating Jacobi field, see Figure \ref{fig:minimizingOrbitPolygon}.
\begin{figure}
	\centering
	\begin{tikzpicture}[scale = 1.5]
	\begin{scope}[decoration={
markings,
mark = at position 0.7 with {\arrow{>}}}
]
\tkzDefPoint(1,0){A};
\tkzDefPoint(-1,0){B};
\tkzDefPoint(0,3){C};
\tkzDrawPoints[blue](A,B,C);
\tkzDrawSegment[blue](A,B);
\tkzDrawSegment[blue](B,C);
\tkzDrawSegment[blue](C,A);
\tkzDefPoint(0,0){D};
\tkzDrawPoint(D)
\draw[red, line width = 2pt,  postaction = {decorate}](0.5,0)--(-0.5,0);
\tkzDefPoint(-0.5,1.5){E};
\tkzDrawPoint(E)
\draw[black, postaction = {decorate}](0,0)--(-0.5,1.5);
\draw[red,dashed, postaction = {decorate}](-0.5,0)--(-0.75,0.75);
\draw[red,dashed, postaction = {decorate}](0.5,0)--(-0.25,2.25);
\draw[red, line width = 2pt,  postaction = {decorate}](-0.25,2.25)--(-0.75,0.75);

\end{scope}
	\end{tikzpicture}
	\caption{If we consider a family of orbits with the same angle (the red dashed segments), then the orientation reverses after reflection.
	 This means that the Jacobi field has alternating signs. \label{fig:minimizingOrbitPolygon}}
\end{figure}
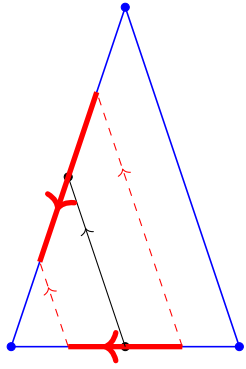
Of course, one can smoothen the angles of the polygon to get a smooth convex billiard table.
It is interesting if one can improve this construction in order to get a billiard table with strictly positive curvature that has a minimizing orbit.
%\end{enumerate}
\section{Geometric meaning of the criterion}\label{section:geometricMeaning}
%\subsection{Partial order}\label{subsection:partialOrder}
\subsection{Geometric criterion}\label{subsection:criterionGeom}
To describe the geometric meaning of the criterion of Theorem \ref{thm:criterionMinMax} proved in Section \ref{section analysis}, we will use the partial order relation that is defined on Lagrangian subspaces, see Subsection \ref{subsection:LagrangianSubspaces}.

Suppose that $N$ is either $\mathbb T^{d-1}$ or a submanifold of $\R^d$ diffeomorphic to $\Sph^{d-1}$, and $M$ is either $T^*\mathbb T^{d-1}$ or a ball bundle over $N$, respectively.

Let $T:M\to M$ be a twist map with a generating function $H$.
Suppose that $\set{(q_n,p_n)}$ is an arbitrary orbit of $T$.
Let $\pi:M\to N$ be the canonical projection, and denote by $V_n = \ker d\pi_{(q_n,p_n)}$ the vertical subspace at the point $(q_n,p_n)$, and set
$\alpha_n=dT(V_{n-1})$, $\beta_n = dT\inv(V_{n+1})$.
Then $V_n$, $\alpha_n$, $\beta_n$ are all Lagrangian subspaces of $T_{(q_n,p_n)}M$, and $\alpha_n$, $\beta_n$ are transversal to $V_n$ by the twist condition.
We start with the following lemma.
Recall that we are working with the coordinates induced on $M$ by the normal local coordinates of $N$ around each point $q_n$ of the orbit $\set{(q_n,p_n)}$, see item \ref{itm:normalCoord} in Subsection \ref{subsection:criterioBallBundle}.
\begin{lemma}\label{lem:recursionAndInvariance}
Let $T$ be a twist map with a generating function $H$, and let $\set{(q_n,p_n)}$ be any orbit of $T$.
Consider a sequence $X_n$ of self-adjoint automorphisms of $T_{q_n}N$.
Define a Lagrangian subspace $L_n$ to be the graph of the endomorphism 
\[W_n=-H_{11}(q_n,q_{n+1})+X_n,\]
i.e., the set $\set{dp=W_n dq}\subseteq T_{(q_n,p_n)}M$, in the $(q,p)$ coordinates.
%{\color{red}\bf not clear, do you need horizontal lagrangian subspaces for the Riemannian metric on N}
Then $L_n$ is $T$-invariant if and only if $X_n$ satisfies the recursion formula
\[X_{n+1} = a_{n+1}-b_n^* X_n\inv b_n.\]
\end{lemma}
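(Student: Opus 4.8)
The plan is to compute the differential $dT_{(q_n,p_n)}$ explicitly from the implicit equations that define the twist map, apply it to the Lagrangian graph $L_n$, and read off precisely when the image is the graph $L_{n+1}$.

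First I would differentiate the defining relations of $T$. Recall that $T(q,p)=(Q,P)$ is characterized by $p=-H_1(q,Q)$ and $P=H_2(q,Q)$. Differentiating $p=-H_1(q,Q)$ at the point corresponding to $(q_n,q_{n+1})$ gives $dp=-H_{11}\,dq-H_{12}\,dQ$; since $b_n=H_{12}(q_n,q_{n+1})$ is invertible by the twist condition, this yields
\[
dQ=-H_{12}^{-1}\bigl(H_{11}\,dq+dp\bigr).
\]
Differentiating $P=H_2(q,Q)$ and substituting the expression for $dQ$ gives
\[
dP=H_{21}\,dq+H_{22}\,dQ=\bigl(H_{21}-H_{22}H_{12}^{-1}H_{11}\bigr)dq-H_{22}H_{12}^{-1}\,dp,
\]
all second derivatives here being evaluated at $(q_n,q_{n+1})$. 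This is the block form of $dT_{(q_n,p_n)}$ in the $(q,p)$-coordinates on $M$ induced by the normal coordinates on $N$ around $q_n,q_{n+1}$, in which $(dq,dp)$ are Darboux coordinates (see item \ref{itm:normalCoord} of Subsection \ref{subsection:criterioBallBundle}); in the torus case these are simply the standard coordinates, and the $H_{ij}$ are literal matrices.

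Next I would feed a generic vector $(dq,W_n\,dq)\in L_n$ into this map, using the key simplification $H_{11}(q_n,q_{n+1})+W_n=X_n$. One obtains
\[
dQ=-H_{12}^{-1}X_n\,dq,\qquad dP=\bigl(H_{21}-H_{22}H_{12}^{-1}X_n\bigr)dq.
\]
Because $X_n$ is an automorphism and $H_{12}$ is invertible, the assignment $dq\mapsto dQ$ is an isomorphism $T_{q_n}N\to T_{q_{n+1}}N$, so $dT_{(q_n,p_n)}(L_n)$ is again transversal to the vertical subspace, i.e.\ a graph over the base. Eliminating $dq$ via $dq=-X_n^{-1}H_{12}\,dQ$ shows that $dT_{(q_n,p_n)}(L_n)$ is the graph of the operator
\[
H_{22}(q_n,q_{n+1})-H_{21}(q_n,q_{n+1})\,X_n^{-1}\,H_{12}(q_n,q_{n+1}).
\]
Since $L_{n+1}$ is by definition the graph of $W_{n+1}=-H_{11}(q_{n+1},q_{n+2})+X_{n+1}$, the equality $dT_{(q_n,p_n)}(L_n)=L_{n+1}$ holds if and only if these two operators agree, that is,
\[
X_{n+1}=H_{11}(q_{n+1},q_{n+2})+H_{22}(q_n,q_{n+1})-H_{21}(q_n,q_{n+1})\,X_n^{-1}\,H_{12}(q_n,q_{n+1})=a_{n+1}-b_n^{*}X_n^{-1}b_n,
\]
by the definitions of $a_{n+1},b_n,b_n^{*}$. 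As every step is an equivalence, both directions follow at once.

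The computation is routine; the only points requiring attention are keeping track of the argument pair at which each second derivative is evaluated — the operators $H_{11},H_{12},H_{21},H_{22}$ appearing in $dT_{(q_n,p_n)}$ are all taken at $(q_n,q_{n+1})$, whereas the $H_{11}$ hidden inside $W_{n+1}$ is taken at $(q_{n+1},q_{n+2})$ — and justifying that the image $dT_{(q_n,p_n)}(L_n)$ remains transversal to the vertical, which is exactly where the invertibility of $X_n$ enters. I would also recall explicitly, as the statement of the lemma does, that all identifications of the $H_{ij}$ with linear operators between tangent spaces are the ones fixed via the induced Riemannian metric in Subsection \ref{subsection:criterioBallBundle}. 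No step presents a genuine obstacle.
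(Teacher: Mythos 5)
Your proof is correct and takes essentially the same approach as the paper: both compute $dT$ in the $(q,p)$ block coordinates (you via implicit differentiation of $p=-H_1$, $P=H_2$; the paper via factoring $T=\tilde T\circ\varphi$), apply it to a generic vector of $L_n$, use the simplification $H_{11}+W_n=X_n$, eliminate $dq$, and match against $W_{n+1}$. The resulting chain of equivalences is identical, so there is nothing to flag.
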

\begin{proof}
First, we give an explicit description for the differential of $T$.
We can write $T=\tilde{T}\circ \varphi$, where $\varphi(q,p)=(q,Q)$ is a diffeomorphism by the twist condition (see item \ref{itm:twistDiffeo} of Definition \ref{def:twistMapDef}), and $\tilde{T}(q,Q)=(Q,P)$.
The inverse of $\varphi$ and $\tilde{T}$ can be described with the generating function $H$:
\begin{gather*}
\varphi\inv (q,Q)=(q,-H_1(q,Q)),\\
\tilde{T}(q,Q)=(Q,H_2(q,Q)).
\end{gather*}
Hence the differentials have the following block representation in normal coordinates:
\begin{gather*}
d\varphi\inv = \begin{pmatrix}
I & 0 \\
-H_{11} & -H_{12} 
\end{pmatrix},\quad
d\tilde{T}=\begin{pmatrix}
0 & I \\
H_{21} & H_{22}
\end{pmatrix},
\end{gather*}
and also, $d\varphi = \begin{pmatrix}
I & 0 \\
-H_{12}\inv H_{11} & -H_{12}\inv
\end{pmatrix}$.
Therefore, we have the following description of the $dT$:
\[dT=d\tilde{T}d\varphi = \begin{pmatrix}
-H_{12}\inv H_{11} &  -H_{12}\inv \\
H_{21} - H_{22}H_{12}\inv H_{11} & -H_{22}H_{12}\inv
\end{pmatrix}.\]
Next, we find the image of $L_n$ by $dT$.
 Since
$L_n$ is the graph of $W_n$, then we need to compute
\begin{gather*}
dT(L_n)=\set{\begin{pmatrix}
-H_{12}\inv H_{11} &  -H_{12}\inv \\
H_{21} - H_{22}H_{12}\inv H_{11} & -H_{22}H_{12}\inv
\end{pmatrix}
\begin{pmatrix}
v \\
(-H_{11}+X_n)v
\end{pmatrix}} = \\
=\set{
\begin{pmatrix}
-H_{12}\inv X_n v \\
(H_{21}-H_{22}H_{12}\inv X_n)v
\end{pmatrix}}
\end{gather*}
We need to compare the result to $L_{n+1}$.
Therefore, we check that the equality
\[W_{n+1}(-H_{12}\inv X_n v) = (H_{21}-H_{22}H_{12}\inv X_n)v\]
holds if and only if $X_n$ satisfies the recursion formula.
The former is equivalent to checking that $-W_{n+1}H_{12}\inv X_n = H_{21}-H_{22}H_{12}\inv X_n$, which is equivalent to $W_{n+1}=H_{22}-H_{21}X_n\inv H_{12}$.
And indeed, this equation holds if and only if $-H_{11}(q_{n+1},q_{n+2})+X_{n+1}=H_{22}(q_n,q_{n+1})-b_n^* X_n\inv b_n$, which is equivalent to $X_{n+1}=H_{22}+H_{11}-b_n^* X_n\inv b_n$, and this is the required recursive relation.
\end{proof}
Before proving Theorem \ref{thm:morbitInequality}, we make the following remarks.
\begin{remark*}

\begin{enumerate}
\item Theorem \ref{thm:morbitInequality} implies, in particular, that at the points of a locally maximizing orbit we have $\alpha_n \subspaceleq{V_n}\beta_n$.
\item\label{itm:oneIneqIsEnough} As the proof of the theorem shows, in the converse direction, it is enough to assume that $L_n$ satisfies the assumptions \ref{itm:transversality}, \ref{itm:invariance}, and that either the left inequality of \ref{itm:inequality} holds for all $n\in\Z$, or that the right one holds for all $n\in\Z$. 
\item Since Theorem \ref{prop:mOrbitCriterioBallBundle} works also for locally minimizing orbits (see Subsection \ref{subsection:examples}), then this theorem can also be applied for locally minimizing orbits.
In this case, the inequalities in item \ref{itm:inequality} will be reversed.
\end{enumerate}
\end{remark*}

\begin{proof}[Proof of Theorem \ref{thm:morbitInequality}]
Write $H$ for the generating function of $T$.
Recall that $\alpha_n$ and $\beta_n$ are the graphs of the self-adjoint endomorphisms $H_{22}(q_{n-1},q_n)$, $-H_{11}(q_n,q_{n+1})$, respectively. 
First, assume that $\set{(q_n,p_n)}$ is an m-configuration.
Then, given by Theorem \ref{prop:mOrbitCriterioBallBundle}, there exists a Jacobi field $J_n$ for which $X_n=-b_n J_{n+1}J_n\inv$ is negative definite, and in particular, an automorphism.
Define the subspace $L_n$ to be the graph of the endomorphism
\[W_n = -H_{11}(q_n,q_{n+1})+X_n.\]
The endomorphism $W_n$ is self-adjoint, so $L_n$ is indeed a Lagrangian subspace. 
Using the fact that $J_n$ is a Jacobi field, one can compute that $X_n$ satisfies the recursive relation
\[X_{n+1}=a_{n+1}-b_n^* X_n\inv b_n.\]
Hence, Lemma \ref{lem:recursionAndInvariance} implies that $L_n$ is an invariant subspace field, so item \ref{itm:invariance} holds.
Item \ref{itm:transversality} follows since $L_n$ is a graph of some operator.
If $\set{(q_n,p_n)}$ is an m-orbit, then $X_n$ is negative definite, and therefore $W_n<-H_{11}$, so we have $L_n\subspaceleq{V_n}\beta_n$.
Also, we can write, thanks to the recursive relation of $X_n$,
\[W_n=-H_{11}+a_n-b_{n-1}^*X_{n-1}\inv b_{n-1}=H_{22}(q_{n-1},q_n)-b_{n-1}^* X_{n-1}\inv b_{n-1},\]
and as a result $H_{22} < W_n$, so we also have $\alpha_n\subspaceleq{V_n} L_n$.
%If $\set{(q_n,p_n)}$ is an m-orbit, then $X_n$ above is positive, and hence all the inequalities will be in the other direction.
This proves item \ref{itm:inequality}.

Now we prove the converse statement.
Suppose $L_n$ is Lagrangian subspace field, that satisfies items \ref{itm:transversality}, \ref{itm:invariance}, and one of the inequalities in item \ref{itm:inequality}.
Let $W_n$ be the self-adjoint operator for which $L_n$ is the graph.
If the inequality in item \ref{itm:inequality} is  $\alpha_n \subspaceleq{V_n}L_n $, then we can define $X_n$ by 
\[X_n = W_n+H_{11}(q_n,q_{n+1}),\]
 and if the inequality is $\beta_n \subspaceleq{V_n} L_n$, then we can define $X_n$ by 
 \[X_n = b_{n-1}\inv (H_{22}(q_{n-1},q_n)-W_n)\inv (b_{n-1}\inv)^*.\]
The inequality in item \ref{itm:inequality} implies that $X_n$ is negative definite, and in particular, an automorphism.
The fact that $L_n$ is invariant implies by Lemma \ref{lem:recursionAndInvariance} that $X_n$ satisfies the recursive relation written there (technically, the lemma addresses only $X_n$ that are defined in terms of $H_{11}$. 
If $X_n$ is defined in terms of $H_{22}$, then the proof is analogous).
Since $X_n$ satisfies the recursive relation, we can define a Jacobi field by the recipe of \eqref{eq:JacobiFieldFromX}, and then Theorem \ref{prop:mOrbitCriterioBallBundle} will imply that $\set{(q_n,p_n)}$ is an m-configuration.
The proof is completed.

\end{proof}

\subsection{Two generating functions}
In this subsection we recall the Geometric Assumption \ref{GeometricAssumption} from Section \ref{section:intro}, and show that it guarantees that two generating functions of the same twist map will have the same sets of m-orbits.
%We start from twist maps of torii, and at the end of this Subsection we explain that the same reasoning also applies for ball bundles over spheres. 
Suppose now that the twist map $T$ has two generating functions, $H^{(1)}$ and $H^{(2)}$, with respect to two sets of symplectic coordinates.
 Recall that we denote by $\mathcal{M}_{H^{(1)}}$, $\mathcal{M}_{H^{(2)}}$ the  sets that consists of all m-orbits for the generating functions $H^{(1)}$, $H^{(2)}$, respectively.

We now restate the Geometric Assumption \ref{GeometricAssumption}, and show that it ensures that the m-orbits according to the two generating functions coincide.
We denote the vertical bundles for each set of coordinates by $V^{H^{(1)}}$ and $V^{H^{(2)}}$, and write 
\begin{gather*}
\alpha^{H^{(1)}}=dT(V^{H^{(1)}}),\,, \alpha^{H^{(2)}} = dT(V^{H^{(2)}}), \\
   \beta^{H^{(1)}}=dT\inv(V^{H^{(1)}}),\,,
  \beta^{H^{(2)}}=dT\inv(V^{H\powbra{2}}).
   \end{gather*}

\textbf{Geometric Assumption}
\begin{itemize}
\myitem{(GA)}\label{GeometricAssumption} Suppose that at every point of $\mathcal{M}_{H\powbra{1}}\cup\mathcal{M}_{H\powbra{2}}$ 
%an m-orbit of $T$ (with respect to either of the generating functions)
 there exists a homotopy of Lagrangian subspaces $\set{V_t}$ connecting $V^{H\powbra{1}}$ and $V^{H\powbra{2}}$, such that for all $t$, the subspace $V_t$ is transversal to all four subspaces $\alpha^{H\powbra{1}}$, $\beta^{H\powbra{1}}$, $\alpha^{H\powbra{2}}$, $\beta ^{H\powbra{2}}$.

\end{itemize}
Note that we only require this homotopy to exist pointwise, and we do not assume that it needs to be uniform in some way.
In the two-dimensional case, in \cite{bialy2022locally}, we used a condition that is somewhat similar to the condition \ref{GeometricAssumption}.

Now we turn to prove Theorem \ref{thm:geometricAssumptionImpliesMOrbitEq}.

\begin{proof}[Proof of Theorem \ref{thm:geometricAssumptionImpliesMOrbitEq}]
Suppose that $\set{(q_n,p_n)}$ is an m-orbit with respect to $H\powbra{1}$.
From Theorem \ref{thm:morbitInequality}, there exists an invariant field of Lagrangian subspaces $L$ along the orbit that is transversal to $V^{H\powbra{1}}$, and for which 
\[ \alpha^{H\powbra{1}}\subspaceleq{V^{H\powbra{1}}} L \subspaceleq{V^{H\powbra{1}}} \beta^{H\powbra{1}}.\]
In particular, $ \alpha^{H\powbra{1}}\subspaceleq{V^{H\powbra{1}}} \beta^{H\powbra{1}}$, and hence by Proposition \ref{prop:orderQuadraticForm}
\[Q[\alpha^{H\powbra{1}},\beta^{H\powbra{1}}]\mid_{V^{H\powbra{1}}} > 0.\]
Now we use Lemma \ref{lem:homotopyTransversalityPersistence} to see that for all $t\in[0,1]$
\[Q[\alpha^{H\powbra{1}},\beta^{H\powbra{1}}]\mid_{V_t} > 0.\]
On the other hand, the inequality $\alpha^{H\powbra{1}} \subspaceleq{V_{H\powbra{1}}} L \subspaceleq{V_{H\powbra{1}}} \beta^{H\powbra{1}}$ means, according to item \ref{itm:doubleInequalityNegative} of Proposition \ref{prop:orderQuadraticForm}, that 
\[Q[\alpha^{H\powbra{1}},\beta^{H\powbra{1}}]\mid_L < 0.\]
Therefore, that $L$ is necessarily transversal to all the subspaces $V_t$.
In particular, $L$ is transversal to $V^{H\powbra{2}}$, and hence also to $\alpha^{H\powbra{2}}$ and $\beta^{H\powbra{2}}$.
Thus, $L$ satisfies the first two items of Theorem \ref{thm:morbitInequality}.
To conclude that $\set{(q_n,p_n)}$ is an m-orbit for $H\powbra{2}$, it is enough to show that one of the inequalities mentioned in item \ref{itm:inequality} in that theorem holds (see remark \ref{itm:oneIneqIsEnough} after the Theorem).
So we prove the inequality $  L\subspaceleq{V^{H\powbra{2}}}\beta^{H\powbra{2}}$.
We start from the inequality $\alpha^{H\powbra{1}} \subspaceleq{V^{H\powbra{1}}}L$.
By Proposition \ref{prop:orderQuadraticForm} we have
\[Q[\alpha^{H\powbra{1}},L]\mid_{V^{H\powbra{1}}}>0.\]
Since the subspaces $V_t$ are transversal to both $L$ and $\alpha^{H\powbra{1}}$, we can use Lemma \ref{lem:homotopyTransversalityPersistence} to claim that for all $t\in[0,1]$
\[Q[\alpha^{H\powbra{1}},L]\mid_{V_t}>0,\]
and in particular, $Q[\alpha^{H\powbra{1}},L]\mid_{V^{H\powbra{2}}}$ is positive definite.
This claim is invariant under the action of a symplectomorphism, so we can apply $T\inv$ to all three subspaces, and see that $Q[V^{H\powbra{1}},L]\mid_{\beta^{H\powbra{2}}}$ is also positive definite.
Now, we shift cyclically \cite[Remark 2]{ArnoldV.I.1985TSta} to get that
\[Q[L,\beta^{H\powbra{2}}]\mid_{V^{H\powbra{1}}} > 0.\]
Next, we use Lemma \ref{lem:homotopyTransversalityPersistence} to conclude that this inequality persists through the homotopy, and hence
\[Q[L,\beta^{H\powbra{2}}]\mid_{V^{H\powbra{2}}}>0.\]
By Proposition \ref{prop:orderQuadraticForm}, we can finally conclude that $L\subspaceleq{V^{H\powbra{2}}} \beta^{H\powbra{2}}$, which is the desired inequality.
\end{proof}

%As promised, we will now explain which changes should be made to adapt this Theorem to the setting of ball bundles over spheres.
% Suppose that $T$ is a twist map of a ball bundle over a sphere, $M$, and $H$ and $S$ are two generating functions for it.
%This means that there are two projections $\pi_S,\pi_H:M\to \Sph^{d-1}$ which endow $M$ with the structure of a ball bundle over $\Sph^{d-1}$.
%The vertical subspaces in this cases are 
%\[V^S = \ker d\pi_S, V^H = \ker d\pi_H \subseteq TM.\]
%The manifold $M$ is symplectic, and $V^S$ and $V^H$ are again Lagrangian subspaces.
%The definition then of $\alpha^S$, $\alpha^H$, $\beta^S$, $\beta^H$ then is just as before, and the geometric assumption and the proof of Theorem \ref{thm:geometricAssumptionImpliesMOrbitEq} is just as before, since the proof consists entirely of arguments inside symplectic vector spaces, and because Theorem \ref{thm:morbitInequality} was the same for both twist maps of torii and twist maps of ball bundles over spheres.
\begin{remark*}
The same exact proof can be used to show that the geometric assumption \ref{GeometricAssumption} also implies the equality of the sets of minimizing orbits for both generating functions.
\end{remark*}
\section{Application to multi-dimensional billiards}\label{section:highDimBilliard}
Our goal in this section is to prove Theorem \ref{thm:equalityForBilliard}: to show that the two generating functions for the billiard dynamical system, mentioned in Subsection \ref{subsection:generatingFunctionsBilliards}, have the same m-orbits.
To that end, we show that they satisfy the Geometric Assumption \ref{GeometricAssumption}, and hence Theorem \ref{thm:equalityForBilliard} follows from Theorem \ref{thm:geometricAssumptionImpliesMOrbitEq}.
At this point, it is not clear if the minimizing orbits for these two generating functions are the same.

We use the machinery of wave fronts, see Subsection \ref{subsection:wavefront}.
This allows us to think of local foliations by hypersurfaces (wave fronts) in $\R^d$ instead of Lagrangian subspaces of the tangent space of $\mathcal{L}$, the space of oriented lines.
So instead of describing a homotopy of Lagrangian subspaces, we can describe a homotopy of hypersurfaces.
As explained in Subsection \ref{subsection:wavefront}, the condition that two subspaces will be transversal is equivalent, in this language, to the condition that the difference between the two corresponding hypersurfaces' curvature operators will be non-degenerate.
First, we recall how the hypersurface that represents such a Lagrangian subspace evolves under the billiard map.
\subsection{Sinai-Chernov formula}\label{subsection:sinai}
Consider an oriented line $\ell\in\mathcal{L}$ and a Lagrangian subspace $Y\subseteq T_\ell\mathcal{L}$.
As we explained, there exists a local foliation by hypersurfaces that corresponds to $Y$, for which $Y$ is the tangent space at $\ell$ of the normal bundle of those hypersurfaces.
Every two leaves of the foliation are related by \textit{free flight}, and there is a simple connection between their curvature operators, which is a manifestation of Huygens principle:
\begin{proposition}\label{prop:freeFlight}
Let $\ell$ be an oriented line, and $Y\subseteq T_\ell\mathcal{L}$ be a Lagrangian subspace.
Suppose that $\Sigma_1$ and $\Sigma_2$ are two leaves of the foliation determined by  $Y$.
Write $q_i$ for the intersection point of $\Sigma_i$ and $\ell$, and write $B_i$ for the curvature operator of $\Sigma_i$ at $q_i$.
If $r$ is the (signed) distance between $q_1$ and $q_2$ (computed according to the orientation of $\ell$), then

\begin{equation}\label{eq:B1B2}
B_2(rB_1+I)=B_1.
\end{equation}
Equation \eqref{eq:B1B2} is understood with the help of the identification of the tangent spaces of the fronts $T_{q_1}\Sigma_1$ and $T_{q_2}\Sigma_2$.
\end{proposition}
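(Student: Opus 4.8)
The plan is to work in $T^*\R^d$ with the Hamiltonian $H(q,p) = \tfrac12|p|^2$ and reduce the statement to a purely geometric fact about families of parallel hypersurfaces ("Huygens wave fronts"), as set up in Subsection \ref{subsection:wavefront}. First I would lift $Y$ to a Lagrangian submanifold $K \subseteq T^*\R^d$ invariant under the geodesic (straight-line) flow, lying in $\{|p|=1\}$, and locally written as a graph $\{p = f(q)\}$ with $|f|\equiv 1$ and $df$ symmetric. The leaves $\Sigma_1,\Sigma_2$ of the associated foliation are then two members of the family of hypersurfaces orthogonal to the line field $q + \R f(q)$, and, as recalled in the text, the restriction of $df$ to $\{f(q)\}^\perp$ at the point $q_i$ is exactly the curvature operator $B_i$ of $\Sigma_i$. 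So the claim becomes: as one flows along $\ell$ for (signed) time $r$, the restricted shape operator transforms by $B_1 \mapsto B_2$ with $B_2(rB_1 + I) = B_1$.

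The key computation is the evolution of $df$ under the flow. Parametrize the leaves by a common transversal parameter: a point on $\Sigma_1$ near $q_1$ is $x$, and the corresponding point on $\Sigma_2$ is $\Phi_r(x) = x + r\, n(x)$, where $n(x) = f(x)$ is the unit normal. Then $d\Phi_r = I + r\,dn = I + r B_1$ on the tangent hyperplane $T_{q_1}\Sigma_1$ (here I use that $dn$ restricted to the tangent hyperplane is the shape operator $B_1$, and that the normal direction is preserved by the flow so it decouples). On the other hand, the Gauss map (the normal) is constant along flow lines, i.e. the normal at $\Phi_r(x)$ equals the normal at $x$; differentiating this identity and using $B_2 = dn$ restricted to $T_{q_2}\Sigma_2$, together with the identification $T_{q_2}\Sigma_2 \cong T_{q_1}\Sigma_1$ via $d\Phi_r = I + rB_1$, yields $B_2 \circ (I + rB_1) = B_1$, which is \eqref{eq:B1B2}. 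Care is needed with signs: $r$ is the signed distance along the orientation of $\ell$, so the formula must be read with $r$ possibly negative, and one should check the degenerate-looking case where $I + rB_1$ is singular — there the front $\Sigma_2$ has a focal point, $K$ fails to be a graph near $q_2$, and the identity should be interpreted via the symmetric operators' graphs (equivalently, via the Lagrangian subspace itself, which remains well-defined); I would note that by the remark at the end of Subsection \ref{subsection:wavefront} one can always flow a short time to avoid this, and then the two sides agree by continuity/analyticity in $r$.

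The main obstacle is bookkeeping rather than conceptual: making the identification $T_{q_1}\Sigma_1 \cong T_{q_2}\Sigma_2$ (implicit in the statement of the proposition) completely explicit and checking that the shape operator really is the restriction of $df$ with the correct sign convention, so that the "$+I$" and not "$-I$" appears and $r$ enters linearly. A clean way to organize this is to diagonalize: at a point where $B_1$ has eigenvalue $\kappa$ (principal curvature) with unit principal direction $e$, the parallel front at distance $r$ has principal curvature $\kappa/(1+r\kappa)$ in the transported direction — the classical formula for curvatures of parallel hypersurfaces — and $B_2(rB_1+I) = B_1$ is exactly the operator form of this scalar relation. So I would either give the one-line operator derivation above, or reduce to the principal-curvature formula and cite it; the former is self-contained and short, and I would prefer it. Finally, I would transport the conclusion back to $\mathcal L$: since $B_i$ depends only on the foliation, hence only on $Y$ (not on the choice of lift $K$), the identity \eqref{eq:B1B2} is a statement intrinsic to $(\ell, Y)$, as asserted.
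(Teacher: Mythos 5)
Your argument is correct. Note that the paper does not actually prove Proposition~\ref{prop:freeFlight}: it is stated without proof as a well-known consequence of Huygens' principle for wave fronts (equivalently, the classical formula for curvature operators of parallel hypersurfaces), and only the neighboring Proposition~\ref{prop:billiardCollision} is proved, in Appendix~\ref{app:sinaiChernovProof}. Your derivation is one of the two standard routes: you compute $d\Phi_r = I + rB_1$ for the normal-shift map $\Phi_r(x)=x+rf(x)$ and differentiate the identity $n_2\circ\Phi_r = n_1$ to get $B_2\circ(I+rB_1)=B_1$. The paper's appendix instead works with Jacobi fields via the relation $(p_{a_0}J)'=B\,p_{a_0}J$ of equation~\eqref{eq:JacobiAndCurvature}; the same relation, together with $J''=0$ for a Euclidean geodesic, gives $J(t)=(I+tB(0))J(0)$ and $J'(t)=B(t)J(t)=B(0)J(0)$, hence $B(t)(I+tB(0))=B(0)$, so the Jacobi-field route would yield your conclusion in essentially two lines. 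Two small wording points: the identification of $T_{q_1}\Sigma_1$ with $T_{q_2}\Sigma_2$ that makes \eqref{eq:B1B2} meaningful is the tautological one (both equal $\{f(q_1)\}^\perp \subseteq \R^d$, i.e.\ parallel transport along $\ell$), not the map $d\Phi_r$ — you actually use the former in the computation, so this is only a phrasing slip; and your claim $\Phi_r(\Sigma_1)=\Sigma_2$ deserves one sentence of justification (the invariance $f\circ\Phi_r = f$ on $K$ plus $df(\{f\}^\perp)\subseteq\{f\}^\perp$ shows $\Phi_r$ maps the distribution $\{f\}^\perp$ to itself, so it takes leaves to leaves). Neither affects correctness.
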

%\textcolor{red}{we omit the proof because...}
%\begin{proof}
%Following the construction in Subsection \ref{subsection:wavefront}, we can find a Lagrangian submanifold $\tilde{L}$ of $T^* \R^d$ which projects on the space of oriented lines as a Lagrangian submanifold, that has $L$ as a tangent space. 
%Write $p_0$ for the unit direction of $\ell$.
%Then near the point $(q_i,p_0)$, we can locally write $\tilde{L}$ as a graph $\set{p=f_i(q)}$, and $B_i$ is the restriction of $df_i$ to $\set{p_0}^\perp$.
%Now we express $f_2$ in terms of $f_1$.
%The construction of $\tilde{L}$ guarantees that it is invariant under flow in straight lines. 
%Hence, if $(q,f_1(q))\in \tilde{L}$ is a point near $(q_1,p_0)$, then the point $(q+rf_1(q),f_1(q))$ is still on $\tilde{L}$, and close to $(q_2,p_0)$, and hence 
%\[f_1(q)=f_2(q+rf_1(q)).\]
%Now we can differentiate this equality, and get
%\[B_1 = df_1 = df_2(I+rdf_1)=B_2(I+rB_1),\]
%which is what we wanted to prove. 
%\end{proof}
Now we describe how the curvature operator is affected by a billiard collision.
This is covered by the well-known Sinai-Chernov formula (see \cite{SinaiChernovFormula,Chernov06chaoticbilliards,AST_2003__286__119_0}) .

\begin{proposition}\label{prop:billiardCollision}
Let $\Sigma\subseteq \R^d$ be a smooth convex hypersurface.
Suppose that at a point $y\in \Sigma$ a billiard collision occurs, from an incoming ray $\ell_1$ with direction $u_1$ to an outgoing ray $\ell_2$ with direction $u_2$.
Suppose that $Y\subseteq T_{\ell_1}\mathcal{L}$ is a Lagrangian subspace.
Write $\Sigma_1$ for the leaf of the foliation defined by $Y$ (through the method of Subsection \ref{subsection:wavefront}) that passes through $y$, and $B_1$ for its curvature operator at $y$.
Write $T$ for the billiard map in $\Sigma$. 
Then $dT_{\ell_1}(Y)\subseteq T_{\ell_2}\mathcal{L}$ is another Lagrangian subspace.
Suppose that $\Sigma_2$ is the leaf of the foliation that this subspace defines, which passes through $y$, and write $B_2$ for its curvature operator at $y$.
Then
\[B_2 = R^* B_1 R + 2\inprod{u_2}{n_y}(p_{n_y,u_2}\inv)^* dG_y p_{n_y,u_2}\inv,\]
where $n_y$ is the outer unit normal to $\Sigma$ at $y$,
 $dG_y$ is the differential of the Gauss map at $y$ (i.e., the curvature operator of $\Sigma$ at $y$),
  $R$ is the restriction of the orthogonal reflection about $\set{n_y}^\perp$ to a map from $\set{u_2}^\perp$ to $\set{u_1}^\perp$,
   and for two unit vectors $a$, $b$, $p_{a,b}$ denotes the restriction of the orthogonal projection on $\set{b}^\perp$ to $\set{a}^\perp$.

\end{proposition}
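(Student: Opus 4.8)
The plan is to reduce the billiard reflection to two ingredients: the reflection law as a linear map on the perpendicular hyperplanes, and the contribution of the curvature of the reflecting hypersurface $\Sigma$ itself. I would set up coordinates adapted to the collision point $y$: work in $\R^d$ with $y$ as origin, let $n_y$ be the outer unit normal, and parametrize $\Sigma$ locally as a graph over its tangent hyperplane $\set{n_y}^\perp$, so that near $y$ the hypersurface is $\{z + \tfrac12\inprod{dG_y z}{z} n_y + o(|z|^2) : z \in \set{n_y}^\perp\}$, where $dG_y$ is the curvature operator (differential of the Gauss map). The incoming front $\Sigma_1$ and the outgoing front $\Sigma_2$ are both hypersurfaces through $y$, orthogonal to $\ell_1$ and $\ell_2$ respectively; their curvature operators $B_1$, $B_2$ act on $\set{u_1}^\perp$, $\set{u_2}^\perp$.

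The key computation is to track a wave front through the reflection. First I would use the variational/geometric-optics description: the reflected front $\Sigma_2$ is, infinitesimally, obtained from $\Sigma_1$ by the rule that light rays from $\Sigma_1$ hit $\Sigma$, reflect, and the reflected rays are normal to $\Sigma_2$. Parametrize a neighborhood of $y$ on $\Sigma$ by $z\in\set{n_y}^\perp$; the orthogonal projection $p_{n_y,u_1}\inv$ (resp. $p_{n_y,u_2}\inv$) identifies this with a neighborhood of $y$ on $\Sigma_1$ (resp. $\Sigma_2$) by following rays. The reflection law says: at each point of $\Sigma$, the incoming direction reflects about the local tangent hyperplane of $\Sigma$. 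Linearizing at $y$, the variation of the outgoing direction splits into (i) the part coming from the variation of the incoming front's normal, transported by the reflection $R:\set{u_2}^\perp\to\set{u_1}^\perp$ about $\set{n_y}^\perp$, contributing $R^*B_1R$, and (ii) the part coming from the variation of the normal of $\Sigma$ as the collision point moves, which is governed by $dG_y$ and produces the term $2\inprod{u_2}{n_y}(p_{n_y,u_2}\inv)^*dG_y\, p_{n_y,u_2}\inv$. The factor $2\inprod{u_2}{n_y}$ is the standard "mirror equation" factor (twice the cosine of the angle of incidence) that appears because the reflected ray picks up twice the tilt of the mirror's normal, scaled by the obliquity of the projection onto the front.

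The cleanest way to make this rigorous is to compute the second fundamental form of $\Sigma_2$ at $y$ directly: take a curve $z(s)$ in $\set{n_y}^\perp$ with $z(0)=0$, lift it to a curve $\gamma_1(s)$ on $\Sigma_1$ along incoming rays and to $\gamma_2(s)$ on $\Sigma_2$ along outgoing rays, express the unit normal field $\nu_2(s)$ to $\Sigma_2$ along $\gamma_2$ via the reflection formula $\nu_2 = $ (reflection of $\nu_1$ about the tangent plane of $\Sigma$ at the moving collision point), and then read off $B_2 = -d\nu_2/d\gamma_2$ at $s=0$ by the chain rule, using $B_1 = -d\nu_1/d\gamma_1$ and $dG_y = -dn/dz$ along $\Sigma$. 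The chain-rule bookkeeping — composing the projections $p_{n_y,u_i}\inv$, the reflection $R$, and keeping track of which perpendicular hyperplane each operator lives on, together with the identification of $T_{q_i}\Sigma_i$ with $\set{u_i}^\perp$ — is where all the subtlety lies; I expect the main obstacle to be this careful unwinding of the identifications so that the final identity is an equality of operators $\set{u_1}^\perp \to \set{u_1}^\perp$ transported to act on the correct space, rather than keeping track of genuinely new geometry. I would organize the appendix proof (Appendix \ref{app:sinaiChernovProof}) around this single second-fundamental-form computation, checking the scalar case $d=2$ first to pin down the constant $2\inprod{u_2}{n_y}$, and then noting the computation is formally identical in higher dimensions once the projections and reflection are written as the linear maps $p_{n_y,u_i}\inv$ and $R$.
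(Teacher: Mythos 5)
Your proposal is correct and follows essentially the same route as the paper's Appendix \ref{app:sinaiChernovProof} proof: both parametrize a one-parameter family of rays meeting $\Sigma$, differentiate the reflection law $u_1^+ = u_1 - 2\inprod{u_1}{n_{y_\varepsilon}}n_{y_\varepsilon}$ in the parameter, and convert the resulting variation of the direction field into a statement about the curvature operators of the fronts (the paper packages this via the Jacobi-field identity \eqref{eq:JacobiAndCurvature}, you via the Weingarten map $d\nu/d\gamma$ directly, which is the same relation). The only things to watch are that your sign convention $B=-d\nu/d\gamma$ is opposite to the one the paper uses, and that checking $d=2$ will not by itself fix the operator ordering or the placement of $p_{n_y,u_i}\inv$ versus its adjoint, since those all collapse to scalars in two dimensions; the identification $R=p_{n_y,u_1}p_{n_y,u_2}\inv$ still has to be verified.
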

For completeness, we provide a proof in Appendix \ref{app:sinaiChernovProof}.
\subsection{Description of homotopy}\label{subsection:homotopy}
In this subsection, we describe a homotopy $V_s$, $s\geq 0$, between the subspaces $V^S$ and $V^L$ of $T_{\ell}\mathcal{L}$ for some oriented line $\ell$.
The geometric idea is very simple and is demonstrated in Figure \ref{fig:homotopy}.
  \begin{figure}
        \centering
        \begin{tikzpicture}[scale = 1.75]
        	\begin{scope}[decoration={
markings,
mark = at position 1 with {\arrow{>}}}
]
\draw[black, line width = 1pt, postaction = {decorate}](1,0)--(0.5,0);
\end{scope}
\tkzDefPoint(1,0){A};
\tkzDrawPoint[size = 4pt](A);
\draw[dashed](1,0)--(4,0);
\tkzDefPoint(2,0){B};
\tkzDrawPoint[size=4pt,blue](B);
\tkzDefPoint(3,0){C};
\tkzDrawPoints[size=4pt,red](C);
\draw[domain = -10:100, smooth, variable = \t, black, line width = 2.5pt] plot({0+1.414*cos(\t)},{-1+1.414*sin(\t)});

\draw[domain = 150:210, smooth, variable = \t, blue, line width = 1pt,dashed] plot({cos(\t)+2},{sin(\t)});

\draw[domain = 165:195, smooth, variable = \t, red, line width = 1pt,dashed] plot({2*cos(\t)+3},{2*sin(\t)});

\node[black] at (1.1,-0.8) {$\Sigma$};
\node[black] at (0.9,0.2) {$y$};
\node[blue] at (2,-0.3) {$y-s_1 u$};
\node[red] at (3,-0.3) {$y-s_2 u$};
\node[blue] at (1.5,0.2) {$B_{s_1}=\frac{1}{s_1}I$};
\node[red] at (0.7,0.8) {$B_{s_2}=\frac{1}{s_2}I$};
\node[black] at (0.4,0) {$u$};
       \end{tikzpicture}
       \caption{Homotopy of wave fronts and subspaces.\label{fig:homotopy}}
    \end{figure}
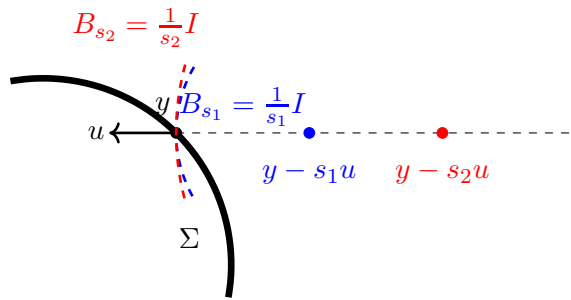
Write $y$ for the intersection point of $\ell$ with $\Sigma$, and $u$ for its unit direction.
The vertical subspace of the $L$ coordinates at $T_{\ell}\mathcal{L}$ corresponds to rays emanating from the point $y$. 
These rays induce a local foliation by spheres centered at $y$.
The vertical subspace of the $S$ coordinates at $T_{\ell}\mathcal{L}$ corresponds to rays that are parallel to $\ell$.
The corresponding local foliation is induced by hyperplanes orthogonal to $\ell$.
Thus, at the level of hypersurfaces, we can define the homotopy as the homotopy that moves the center of the sphere from $y$ to $y-su_2$, where $s\in(0,\infty)$.
The leaf of the foliation that passes through $y$ is then a sphere of radius $s$ centered at $y-su_2$, and the curvature operator of this hypersurface is $B_s = \frac{1}{s}I$.
In terms of Lagrangian subspaces, the subspace $V_s$, for $s\in (0,\infty)$, is just the tangent space at $\ell$ of the normal bundle to the sphere of radius $s$ centered at $y-su$.
This extends continuously when $s\to 0$ to the subspace $V^L$, and when $s\to\infty$ to the subspace $V^S$.
As a result, this homotopy, as a homotopy of the Lagrangian subspaces, is indeed continuous for $s\in[0,\infty]$.

\subsection{Verification of the criterion}
\label{subsection:verification}
Now we are ready to prove Theorem \ref{thm:equalityForBilliard}.
We do it by verifying that the hypotheses of Theorem \ref{thm:geometricAssumptionImpliesMOrbitEq} are satisfied by the m-orbits of the billiard map. 
Thus, we verify the Geometric Assumption \ref{GeometricAssumption} for the generating functions $L$, $S$.
Fix an oriented line $\ell_2 = y+\R u_2\in\mathcal{L}$, $y\in\Sigma$, $|u_2|=1$.
We use the homotopy described in Subsection \ref{subsection:homotopy}.
Our goal is to show that whenever we have an m-orbit for one of the coordinates, then the difference between $B_s$ and the curvature operator of the front corresponding to $\alpha^S$, $\beta^S$, $\alpha^L$, $\beta^L$, denoted by $B$, is non-degenerate (as was explained in Subsection \ref{subsection:wavefront}, this is equivalent to the subspaces being transversal).
Since $B_s=\frac{1}{s}I$, for $s> 0$ (including $\infty$), this is equivalent to $B$ having no positive eigenvalues, i.e., that $B$ is negative definite.

The condition that the point is an m-orbit implies, by Theorem \ref{thm:morbitInequality}, that $\alpha^S\subspaceleq{V^S} \beta^S$, or that $\alpha^L \subspaceleq{V^L} \beta^L$, and this means that $S_{11}+S_{22}<0$, or $L_{11}+L_{22}<0$ for all points of the orbit.  
The verification will be done at the tangent space to the line $\ell_2 = y+\R u_2$.
First, let us fix the notation.
Suppose the billiard orbit is the following:
\[...\rightarrow a \xrightarrow {u_0} x \xrightarrow{u_1} y \xrightarrow{u_2} z \xrightarrow{u_3} b \rightarrow...\]
Namely, the collision points on $\Sigma$ are $a$, $x$, $y$, $z$, $b$, in this order, and the vectors $u_i$ are the unit vectors between two subsequent points, see Figure \ref{fig: fivePointCollision}.
  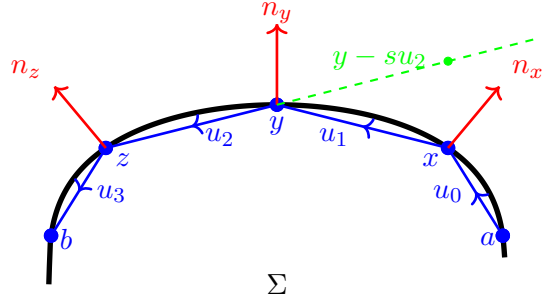
\begin{figure}
        \centering
       \begin{tikzpicture}[scale = 1.5]
   \draw[domain = -2:2, smooth, variable = \t, black, samples = 300, line width = 2pt] plot({\t},{(4-(\t)^2)^(1/3)});
%   \draw[domain = -2:2, smooth, variable = \t, black, samples = 300, line width = 2pt] plot({-\t},{(4-(\t)^2)^(1/3)});
%      \draw[domain = -2:2, smooth, variable = \t, black, samples = 300, line width = 2pt] plot({\t},{-(4-(\t)^2)^(1/3)});
%         \draw[domain = -2:2, smooth, variable = \t, black, samples = 300, line width = 2pt] plot({-\t},{-(4-(\t)^2)^(1/3)});
\tkzDefPoint(0,1.587){A};
\tkzDefPoint(1.5,1.205){B};
\tkzDefPoint(-1.5,1.205){C};
\tkzDefPoint(1.98,0.43){D};
\tkzDefPoint(-1.98,0.43){E};
\tkzDrawPoints[size = 5pt,blue](A,B,C,D,E);
	\begin{scope}[decoration={
markings,
mark = at position 0.5 with {\arrow{>}}}
]
\draw[blue, line width = 1pt, postaction = {decorate}](1.98,0.43)--(1.5,1.205);
\draw[blue, line width = 1pt, postaction = {decorate}](1.5,1.205)--(0,1.587);
\draw[blue, line width = 1pt, postaction = {decorate}](0,1.587)--(-1.5,1.205);
\draw[blue, line width = 1pt, postaction = {decorate}](-1.5,1.205)--(-1.98,0.43);

\end{scope}
\tkzDrawPoints[size = 5pt,blue](A,B,C,D,E);
	\begin{scope}[decoration={
markings,
mark = at position 1 with {\arrow{>}}}
]
\draw[red, line width = 1pt, postaction = {decorate}](0,1.587)--(0,2.3);
\draw[red, line width = 1pt, postaction = {decorate}](1.5,1.205)--(1.5+0.15*2*1.5,1.205+0.15*3*1.205);
\draw[red, line width = 1pt, postaction = {decorate}](-1.5,1.205)--(-1.5-0.15*2*1.5,1.205+0.15*3*1.205);

\end{scope}
\draw[dashed,green,line width = 0.8pt](0,1.587)--(0+1*1.5*1.5,1.587+1.5*0.385);
\tkzDefPoint(1*1.5,1.587+1*0.385){F}
\tkzDrawPoint[size = 3pt, green](F);
\node at (0,0) {$\Sigma$};
\node[blue] at (0,1.4) {$y$};
\node[blue] at (1.35,1.1) {$x$};
\node[blue] at (-1.35,1.1) {$z$};
\node[blue] at (1.85,0.4) {$a$};
\node[blue] at (-1.85,0.4) {$b$};

\node[blue] at (1.5,0.8) {$u_0$};
\node[blue] at (-1.45,0.8) {$u_3$};
\node[blue] at (0.5,1.3) {$u_1$};
\node[blue] at (-0.5,1.3) {$u_2$};

\node[red] at (0,2.4) {$n_y$};
\node[red] at (2.2,1.9) {$n_x$};
\node[red] at (-2.2,1.9) {$n_z$};
\node[green] at (0.9,2) {$y-su_2$};
       \end{tikzpicture}
       \caption{Notation for the proof of the transversality.\label{fig: fivePointCollision}}
    \end{figure}
%Write $x\in\Sigma$ for the previous collision point, $z\in\Sigma$ for the next collision point, and $u_1$, $u_3$ for the corresponding unit vectors.
We denote by $n_c$ the external unit normal to $c\in\Sigma$, and by $G:\Sigma\to\Sph^{d-1}$ the Gauss map, $G(c)=n_c$.
We also write $\ell_1=x+\R u_1$, $\ell_3 = z+\R u_3$, so that $T(\ell_i)=\ell_{i+1}$.
We will also use subscripts to denote the line at which we consider this subspace of the tangent space.
For example, $\alpha^S_{\ell_2}=dT(V^S_{\ell_1})\subseteq T_{\ell_2}\mathcal{L}$.

%\begin{enumerate}
%\item\label{itm:alphaS}
\textbf{Transversality with $\alpha^S$}: Since by definition $\alpha^S_{\ell_2}=dT(V^S_{\ell_1})$, then $\alpha^S_{\ell_2}$ corresponds to the wave front of the beam which is obtained from the beam of the parallel rays to $\ell_1$ after the collision. 
The front of the parallel beam is the hyperplane orthogonal to the rays, and hence the curvature operator is the zero operator.
Following this, using the notation of Proposition \ref{prop:billiardCollision}, we conclude that the curvature operator after the collision will be:
\[B=2\inprod{u_2}{n_y}(p_{n_y,u_2}\inv)^* dG_y p_{n_y,u_2}\inv.\]
This is a negative definite operator, since $dG_y$ is positive definite, and $\inprod{u_2}{n_y}<0$, as required.
Note that this case does not use the assumption that this point is part of an m-orbit.

%\item\label{itm:betaL}
\textbf{Transversality with $\beta^L$}: The subspace $\beta^L_{\ell_2}=dT\inv(V^L_{\ell_3})$ corresponds to a wave front that will form a spherical front emanating from the point $z\in\Sigma$ after its next reflection. 
Before reflection, such a front must have focused on $z$. 
Hence, the curvature operator of this front is a negative definite operator, as needed.
%However, the front $B_s$ corresponds to a beam of rays that emanate from the point $y-su_2$. Hence the curvature of this front is positive definite.
%Therefore, the curvature operators of these two fronts are different on any vector. Hence the subspaces $V^s$ and $\beta^L$are transversal.
This argument again does not use the assumption that the orbit is an m-orbit.

%\item\label{itm:betaS}
\textbf{Transversality with $\beta^S$}: The subspace $\beta^S_{\ell_2}=dT\inv(V^S_{\ell_3})$ corresponds to a wave front of a beam that will become parallel after its next reflection at the point $z$.
%
%Denote the current collision point by $x$, and the next collision point by $y$. 
Let $B^-$ denote the curvature of the front right before the collision at $z$.
Then by Proposition \ref{prop:billiardCollision}:
\[0=R^*B^- R +2\inprod{n_z}{u_3}\Big(p_{n_z,u_3}\inv\Big)^*dG_z p_{n_z,u_3}\inv.\]
If we write $B$ for the curvature of the front at $y$, then by Proposition \ref{prop:freeFlight} we have
\[B^-(|z-y|B+I)=B.\]
This can be rewritten as
\[B(|z-y|B^--I)=-B^-.\]
Since $B^-$ is non-degenerate, it follows that $B$ is also non-degenerate, so the result of Proposition \ref{prop:freeFlight} can be rewritten as
\begin{equation}\label{eq:SFFForBetaS}
(B\inv+|z-y|I)\inv = B^- = -2\inprod{n_z}{u_3}R\Big(p_{n_z,u_3}\inv\Big)^* dG_z p_{n_z,u_3}\inv R^*.
\end{equation}
We want $B$ to be a negative definite operator.
 This is equivalent to $B\inv$ being a negative definite operator.
So we solve \eqref{eq:SFFForBetaS} for $B\inv$:
\begin{multline}\label{eq:twoExprForSFFBetaS}
B\inv = R\bigg(-|z-y|I -\frac{1}{2\inprod{n_z}{u_3}}p_{n_z,u_3}(dG_z)\inv p_{n_z,u_3}^* \bigg)R^*=\\
= -|z-y|I -\frac{1}{2\inprod{n_z}{u_3}}R\bigg(p_{n_z,u_3}(dG_z)\inv p_{n_z,u_3}^* \bigg)R^*.
\end{multline}
Observe that $-2\inprod{n_z}{u_3}=|u_3-u_2|$, and $Rp_{n_z,u_3}=p_{n_z,u_2}$ so we only need to check that the operator
\begin{equation}\label{eq:operatorNegativeForBetaS}
\frac{1}{|u_2-u_3|}p_{n_z,u_2}(dG_z)\inv p_{n_z,u_2}^* - |z-y|I,
\end{equation}
is negative definite. 
We show that this indeed follows both from the assumption $S_{11}+S_{22}<0$ (i.e., when the point is part of an m-orbit for $S$) and from the assumption that $L_{11}+L_{22}<0$ (i.e., when the point is part of an m-orbit for $L$).
%\begin{enumerate}
%\item\label{itm:SCaseForBetaS} 

\textbf{Case of $S_{11}+S_{22}<0$}: 
By Lemma \ref{lem:partialDerivativesS}, we know that 
\begin{gather*}S_{11}(u_2,u_3)+S_{22}(u_1,u_2) = \frac{1}{|u_1-u_2|}p_{u_2,n_y}^*(dG_y)\inv p_{u_2,n_y}+\\
\frac{1}{|u_2-u_3|}p_{u_2,n_z}^*(dG_z)\inv p_{u_2,n_z} - |z-y|I < 0.
\end{gather*}
In this sum, the first term is a positive definite operator.
So if we omit it, then the result will still be negative definite.
But at the same time, we also get the exact operator that is written in \eqref{eq:operatorNegativeForBetaS} (by using the fact that $p_{v,w}^* = p_{w,v})$.
Thus the result follows.
%The first term in this sum is a positive definite operator, so if we omit it, the result will still be negative definite, but if we omit it we get (using the fact that $p_{v,w}^*=p_{w,v}$) exactly the operator written in \eqref{eq:operatorNegativeForBetaS}, so the result follows.

%\item\label{itm:LCaseForBetaS}
\textbf{Case of $L_{11}+L_{22} < 0$}: 
By Lemma \ref{lem:partialDerivativesL}, we know that:
\begin{gather*}
L_{11}(z,b)+L_{22}(y,z)=\Big(\frac{1}{|z-y|}+\frac{1}{|b-z|}\Big)p_{n_z,u_3}^* p_{n_z,u_3}+\\
+2\inprod{u_3}{n_z}dG_z < 0.
\end{gather*}
If we omit the summand with $|b-z|$, which is positive definite, and use the fact that $2\inprod{u_3}{n_z}=-|u_2-u_3|$, we are left with
\[\frac{1}{|z-y|}p_{n_z,u_3}^*p_{n_z,u_3}<|u_2-u_3|dG_z.\]
The operator on the left-hand side is positive definite, so we can invert this inequality:
\[\frac{1}{|u_2-u_3|}(dG_z)\inv < |z-y|p_{n_z,u_3}\inv \Big(p_{n_z,u_3}\inv\Big)^*.\]
And now we can take congruence with $p_{n_z,u_3}$, which will preserve this inequality:
\[\frac{1}{|u_2-u_3|}p_{n_z,u_3}(dG_z)\inv p_{n_z,u_3}^* < |z-y|I,\]
which means that the operator written in the top line of \eqref{eq:twoExprForSFFBetaS} is negative, as required.
%\end{enumerate}

%\item\label{itm:alphaL}
\textbf{Transversality with $\alpha^L$}: 
The subspace $\alpha^L_{\ell_2}=dT(V^L_{\ell_1})$ corresponds to the front obtained by reflecting a beam forming a spherical front centered at $x$. 
At the time of the collision, this front will be a sphere of radius $|x-y|$, so the curvature before the reflection is $\frac{1}{|x-y|}I$.
We use Proposition \ref{prop:billiardCollision} to find the curvature operator $B$ after the collision:
\begin{equation}\label{eq:SFFForAlphaL}
B=\frac{1}{|x-y|}I+2\inprod{u_2}{n_y}\Big(p_{n_y,u_2}\inv\Big)^*dG_y p_{n_y,u_2}\inv.
\end{equation}
We want $B$ to be a negative definite operator.
We show again that this follows both from the assumption $S_{11}+S_{22}<0$ (i.e., when the point is part of an m-orbit for $S$), and from the assumption $L_{11}+L_{22}<0$ (i.e., when the point is part of an m-orbit for $L$).
%\begin{enumerate}
%\item\label{itm:SCaseForAlphaL}

 \textbf{Case of $S_{11}+S_{22}<0$}: 
By Lemma \ref{lem:partialDerivativesS}, we have 
\begin{gather*}S_{11}(u_1,u_2)+S_{22}(u_0,u_1) = \frac{1}{|u_2-u_1|}p_{u_1,n_y}^* (dG_y)\inv p_{u_1,n_y} + \\
+ \frac{1}{|u_1-u_0|}p_{u_1,n_x}^*(dG_x)\inv p_{u_1,n_x} - |x-y|I < 0 .
\end{gather*}
We can omit the second summand, which is positive definite, and use the fact that $2\inprod{u_2}{n_y}=-|u_1-u_2|$, to get
\[-\frac{1}{2\inprod{u_2}{n_y}}p_{u_1,n_y}^*(dG_y)\inv p_{u_1,n_y} < |y-x|I.\]
Since the left-hand side is a positive definite operator, then we can invert this inequality:
\[\frac{1}{|y-x|}I < -2\inprod{u_2}{n_y}p_{u_1,n_y}\inv dG_y \Big(p_{u_1,n_y}\inv\Big)^*=-2\inprod{u_2}{n_y}\Big(p_{n_y,u_1}\inv\Big)^*dG_y p_{n_y,u_1}\inv,\]
where we used the fact that $p_{v,w}^*=p_{w,v}$.
Now, take congruence with $R\inv$, where $R$ is the orthogonal reflection through $n_y$, restricted from $\set{u_1}^\perp$ to $\set{u_2}^\perp$.
It holds that $Rp_{n_y,u_1} = p_{n_y,u_2}$, and so we get that:
\begin{gather*}\frac{1}{|y-z|}I < -2\inprod{u_2}{n_y}\Big(R\inv \Big)^*\Big(p_{n_y,u_1}\inv\Big)^*dG_y p_{n_y,u_1}\inv R\inv =\\
 -2\inprod{u_2}{n_y}\Big(p_{n_y,u_2}\inv\Big)^*dG_y p_{n_y,u_2}\inv,
 \end{gather*}
and this gives the negativity of the operator of \eqref{eq:SFFForAlphaL}.
%\item\label{itm:LCaseForAlphaL}

\textbf{Case of $L_{11}+L_{22}<0$}: 
By Lemma \ref{lem:partialDerivativesL}, we have
\[L_{11}(y,z)+L_{22}(x,y) = \Big(\frac{1}{|y-x|}+\frac{1}{|z-y|}\Big)p_{n_y,u_2}^*p_{n_y,u_2}+2\inprod{u_2}{n_y}dG_y < 0.\]
We can omit the summand with $|z-y|$ since it is positive definite, and then take congruence with $p_{n_y,u_2}\inv$, which will keep the inequality, and we get
\[\frac{1}{|x-y|}I+2\inprod{u_2}{n_y}\Big(p_{n_y,u_2}\inv\Big)^*dG_y p_{n_y,u_2}\inv < 0,\]
which proves the negativity of the operator in \eqref{eq:SFFForAlphaL}.
%\end{enumerate}
%\end{enumerate}
Thus, we have constructed a homotopy of Lagrangian subspaces that connects $V^L$ and $V^S$, and under the assumption that the orbit is an m-orbit (for either of the functions), this homotopy is transversal to $\alpha^L$, $\beta^L$, $\alpha^S$, $\beta^S$. 
Therefore, Theorem \ref{thm:geometricAssumptionImpliesMOrbitEq} implies that the m-orbits for $L$ and $S$ are indeed the same. This completes the proof. $\qed$
\begin{appendices}
\section{Derivation of second order derivatives}\label{app:secondOrderDerivatives}
In this appendix, we provide the proofs for Lemmas \ref{lem:partialDerivativesS}, \ref{lem:partialDerivativesL}.
\begin{proof}[Proof of Lemma \ref{lem:partialDerivativesS}]
We compute $S_1$ and $S_{11}$, the computation of $S_2$ and $S_{22}$ is analogous, and deriving the formula for the sum $S_{11}+S_{22}$ is also straightforward.
To compute $S_1(u_2,u_3)\xi$ for some vector $\xi\perp u_2$, we need to differentiate $S(\gamma(t),u_3)$ where $\gamma$ is a smooth curve on the unit sphere, with $\gamma(0)=u_2$ and $\dot{\gamma}(0)=\xi$.
\begin{gather*}
S_1(u_2,u_3)\xi = \frac{d}{dt}\mid_{t=0}S(\gamma(t),u_3)=\frac{d}{dt}\mid_{t=0}\inprod{G\inv(n(\gamma(t),u_3))}{\gamma(t)-u_3}=\\
\inprod{dG_{n_3}\inv \frac{d}{dt}\mid_{t=0} n(\gamma(t),u_3)}{u_2-u_3}+\inprod{G\inv(n_3)}{\xi}.
\end{gather*}
It is straightforward to compute and see that 
\begin{equation}\label{eq:normalDerivative}
\frac{d}{dt}\mid_{t=0}n(\gamma(t),u_3)=\frac{1}{|u_2-u_3|}p_{u_2,n_3}\xi.
\end{equation}
In our case, in the first summand, $u_2-u_3$ is parallel to $n_3$, while $dG_{n_3}\inv$ takes values in $\set{n_3}^\perp$, so the first summand vanishes.
Thus,
\[S_1(u_2,u_3)\xi = \inprod{G\inv(n_3)}{\xi}=\inprod{p_{u_2}G\inv(n_3)}{\xi},\]
where we used the fact that $\xi\in\set{u_2}^\perp$. 
As a result, the linear functional $S_1(u_2,u_3)$ can be identified with the vector $p_{u_2}G\inv(n_3)\in\set{u_2}^\perp$.
This proves the first item.
For the second one, we need to compute (where $\nabla$ denotes the Riemannian connection on $\Sph^{d-1}$)
\begin{gather*}S_{11}(u_2,u_3)\xi = \nabla_\xi p_{u_2}G\inv_{n_3} = p_{u_2}\Big(\frac{d}{dt}\mid_{t=0}G\inv(n(\gamma(t),u_3))-\\
\inprod{G\inv(n(\gamma(t),u_3)}{\gamma(t)}\gamma(t)\Big).
\end{gather*}
The result is (using \eqref{eq:normalDerivative}, and the fact that $p_{u_2}$ eliminates the middle two summands):
\begin{gather*}
S_{11}(u_2,u_3)\xi=p_{u_2}\Big(dG_{n_3}\inv \frac{d}{dt}\mid_{t=0}n(\gamma(t),u_3)-\inprod{dG_{n_3}\inv \frac{d}{dt}\mid_{t=0}n(\gamma(t),u_3)}{u_2}u_2-\\
-\inprod{G\inv(n_3)}{\xi}u_2-\inprod{G\inv(n_3)}{u_2}\xi\Big)=p_{u_2}\Big(\frac{1}{|u_2-u_3|}dG_{n_3}\inv p_{u_2,n_3}\xi\\
-\inprod{G\inv(n_3)}{u_2}\xi\Big).
\end{gather*}
Since $G\inv(n_3)=x_3$, and $dG_{n_3}\inv$ takes values in $\set{n_3}^\perp$, the result is
\[S_{11}(u_2,u_3)=\frac{1}{|u_2-u_3|} p_{n_3,u_2}dG_{n_3}\inv p_{u_2,n_3}\xi-\inprod{x_3}{u_2}\xi,\]
which is the desired result.
In order to compute the mixed partial derivative $S_{12}(u_1,u_2)$, we need to differentiate $S_1(u_1,\gamma(t))$, where $\gamma(t)\in \Sph^{d-1}$ is a curve with $\gamma(0)=u_2$, $\dot{\gamma}(0)=\xi\perp u_2$. 
\begin{gather*}
S_{12}(u_1,u_2)\xi = \frac{d}{dt}\mid_{t=0}S_1(u_1,\gamma(t)) = \frac{d}{dt}\mid_{t=0} p_{u_1} G\inv(n(u_1,\gamma(t))) = \\
=p_{u_1}\frac{d}{dt}\mid_{t=0}G\inv(n(u_1,\gamma(t)))
\end{gather*}
The derivative of $G\inv(n(u_1,\gamma(t)))$ will be a vector in $\set{n_2}^\perp$.
Therefore, we can replace the outer projection with $p_{n_2,u_1}$.
Using equation \eqref{eq:normalDerivative}, we get (we add a minus sign since contrary to equation \eqref{eq:normalDerivative}, here the curve is in the second argument, and not in the first):
\begin{gather*}
S_{12}(u_1,u_2)\xi = p_{n_2,u_1} dG\inv_{n_2}\Big(-\frac{p_{u_1,n_2}\xi}{|u_1-u_2|}\Big)=-\frac{1}{|u_1-u_2|}p_{n_2,u_1}dG\inv_{n_2}p_{u_1,n_2}\xi,
\end{gather*}
which is the required formula, and it is clearly non-degenerate.
\end{proof}

\begin{proof}[Proof of Lemma \ref{lem:partialDerivativesL}]
As in the previous lemma, we give the proofs for $L_1$ and $L_{11}$, and the proofs for $L_2$ and $L_{22}$ are similar.
To compute $L_1(x_2,x_3)\xi$ for some vector $\xi\in T_{x_2}\Sigma$, we need to compute $\frac{d}{dt}\mid_{t=0}L(\gamma(t),x_3)$ for a curve $\gamma(t)\in \Sigma$, $\gamma(0)=x_2$, $\dot{\gamma}(0)=\xi$.
The result is (using the fact that $\xi\in T_{x_2}\Sigma = \set{n_2}^\perp$):
\[L_1(x_2,x_3)\xi = \frac{\inprod{x_2-x_3}{\xi}}{|x_2-x_3|}=-\inprod{u_2}{\xi}=\inprod{-p_{n_2}u_2}{\xi}.\]
So the functional $L_1(x_2,x_3)$ can be identified with the vector $-p_{n_2}u_3\in \set{n_2}^\perp = T_{x_2}\Sigma$.
For the second order partial derivative, we need to compute (where in this time $\nabla$ denotes the Riemannian connection on $\Sigma$):
\begin{gather*}
L_{11}(x_2,x_3)\xi=-\nabla_{\xi} p_{G(x_2)}\frac{x_3-x_2}{|x_3-x_2|}=-p_{n_2}\bigg(\frac{d}{dt}\mid_{t=0}\Big(\frac{x_3-\gamma(t)}{|x_3-\gamma(t)|} - \\
-\inprod{\frac{x_3-\gamma(t)}{|x_3-\gamma(t)|}}{G(\gamma(t))}G(\gamma(t)\Big)\bigg).
\end{gather*}
Start with $\frac{d}{dt}\mid_{t=0}\frac{x_3-\gamma(t)}{|x_3-\gamma(t)|}$. 
It can be checked that the result is 
\[-\frac{\xi}{|x_3-x_2|}+\frac{\inprod{u_2}{\xi}}{|x_3-x_2|}u_2.\]
As a result:
\begin{gather*}
L_{11}(x_2,x_3)\xi = -p_{n_2}\Big(-\frac{\xi}{|x_3-x_2|}+\frac{\inprod{u_2}{\xi}}{|x_3-x_2|}u_2 - \\ \inprod{-\frac{\xi}{|x_3-x_2|}+\frac{\inprod{u_2}{\xi}}{|x_3-x_2|}u_2}{n_2}n_2-\inprod{u_2}{dG_{x_2}\xi}n_2-\inprod{u_2}{n_2}dG_{x_2}\xi\Big).
\end{gather*}
The projection on $\set{n_2}^\perp$ eliminates the two summands that are parallel to $n_2$. 
The first and last summands are already in $\set{n_2}^\perp$.
Thus, we get the required result:
\[L_{11}(x_2,x_3)\xi=\frac{\xi}{|x_3-x_2|}-\frac{\inprod{u_2}{\xi}}{|x_3-x_2|}p_{n_2}u_2+\inprod{u_2}{n_2}dG_{x_2}\xi.\]
As for the sum, from the billiard reflection laws it follows that $p_{n_2}u_1=p_{n_2}u_2$ and $\inprod{u_2}{n_2}=-\inprod{u_1}{n_2}$, so if we sum the expression for $L_{11}(x_2,x_3)$ and $L_{22}(x_1,x_2)$ then we will get
\[\Big(\frac{1}{|x_1-x_2|}+\frac{1}{|x_2-x_3|}\Big)(I-\inprod{u_2}{\cdot}p_{n_2}u_2)+2\inprod{u_2}{n_2}dG_{x_2},\]
and it is immediate to check that on $\set{n_2}^\perp$, we have $I-\inprod{u_1}{\cdot}p_{n_2}u_2=p_{u_2,n_2}p_{n_2,u_2}$, which finishes this computation.
Finally, we compute the mixed partial derivative.
To compute $L_{12}(x_2,x_3)$ we need to differentiate $L_1(x_2,\gamma(t))$ where $\gamma(t)$ is a curve on $\Sigma$ with $\gamma(0)=x_3$.
The result is:
\begin{gather*}
L_{12}(x_2,x_3)\xi = \frac{d}{dt}\mid_{t=0}L_1(x_2,\gamma(t)) = \frac{d}{dt}\mid_{t=0}-p_{n_2}\frac{\gamma(t)-x_2}{|\gamma(t)-x_2|} = \\
=-p_{n_2}\frac{d}{dt}\mid_{t=0}\frac{\gamma(t)-x_2}{|\gamma(t)-x_2|} = -p_{n_2}\Big(\frac{\xi}{|x_3-x_2|} - \frac{\inprod{\xi}{u_2}u_2}{|x_3-x_2|}\Big) = \\
=\frac{-p_{n_2}p_{u_2}\xi}{|x_3-x_2|}.
\end{gather*}
Since $\xi\in T_y\Sigma = \set{n_3}^\perp$, then the inner projection can be replaced with $p_{n_3,u_2}$, and since the inner projection has values in $\set{u_2}^\perp$, the outer projection can be replaced with $p_{u_2,n_2}$, giving the desired formula.
It is non-degenerate since both projections are non-degenerate (the vector $u_2$ is not parallel to either $n_3$ or $n_2$).
\end{proof}
\section{Proof of Sinai-Chernov formula}\label{app:sinaiChernovProof}
In this appendix, we provide a proof for the Sinai-Chernov formula.
Before we begin the proof, let us describe another way to connect a Lagrangian subspace of the space of oriented lines, and the curvature operator of a hypersurface, suggested in \cite{WojtkowskiMaciejP.2004HB}.
As explained in Subsection \ref{subsection:wavefront}, each Lagrangian subspace of $T_\ell \mathcal{L}$, the tangent space to the space of oriented lines at the line $\ell$, induces a local foliation of a neighborhood of points of $\ell$ by hypersurfaces, and the original subspace is then the graph of the curvature operator of those hypersurfaces.
The curvature operator can also be described in the following way. 
Suppose that $\gamma_\varepsilon(t) = a_\varepsilon t + b_\varepsilon$, $|a_\varepsilon| = 1$, is a family of lines that are normal to a given hypersurface $\Sigma$.
This is a family of geodesics in the Euclidean space, and the Jacobi field of this variation is then
\[J(t) = \frac{\partial}{\partial\varepsilon}\mid_{\varepsilon = 0} \gamma_\varepsilon(t).\]
We consider the component of $J$ which is orthogonal to the central ray, $a_0$, $p_{a_0}J$.
Now consider the derivative of this component with respect to time:
\[(p_{a_0}J(t))' = \nabla_{\dot{\gamma_\varepsilon}(t)}p_{a_0}J(t) = \nabla_{p_{a_0}J(t)}\dot{\gamma_\varepsilon}(t),\]
where we used the fact that $\nabla$ is torsion-free, and that the commutator $[p_{a_0}J(t),\dot{\gamma_\varepsilon}(t)]$ vanishes.
Now, the vectors $\dot{\gamma_\varepsilon}(0)$ are a normal field to $\Sigma$, so by definition, $\nabla_{p_{a_0}J(0)}\dot{\gamma_\varepsilon}(t) = B(p_{a_0}J(0))$, where $B$ denotes the curvature operator of $\Sigma$.
Then the conclusion is that 
\begin{equation}\label{eq:JacobiAndCurvature}
(p_{a_0}J)'(0) = B(p_{a_0}J(0)).
\end{equation}
\begin{proof}[Proof of Proposition \ref{prop:billiardCollision}]
Consider a variation of the line $\ell$ to which $Y$ is the tangent space, $\gamma_\varepsilon(t)=u_{1, \varepsilon} t + y_\varepsilon$, with $|u_{1,\varepsilon}|=1$, $y_\varepsilon\in\Sigma$ and $y_0=y$ is the collision point.
Write $u_1$ for $u_{1,0}$, and $\xi = \frac{\partial}{\partial\varepsilon}\mid_{\varepsilon = 0} u_{1,\varepsilon}$, $\eta = \frac{\partial}{\partial\varepsilon}\mid_{\varepsilon = 0} y_\varepsilon$ for the derivatives.
Observe that since $y_\varepsilon\in\Sigma$ then $\eta\in T_{y}\Sigma = \set{n_{y}}^\perp$ (where $n_x$ denotes the outer unit normal at a point $x\in\Sigma$).
Then 
\[p_{u_1}J(0) = p_{u_1}\eta = p_{n_{y},u_1}\eta,\]
\[(p_{u_1}J)'(0) = p_{u_1}\xi=\xi,\]
where we used the fact that $|u_{1,\varepsilon}|=1\implies \xi\perp u_1$.
As a result, \eqref{eq:JacobiAndCurvature} gives
\begin{equation}\label{eq:alphaBetaSinai}
\xi = Bp_{n_{y},u_1}\eta.
\end{equation}
Denote the quantities that refer to the outgoing rays with a $+$ superscript, see Figure \ref{fig:SinaiChernovDemonstration}.
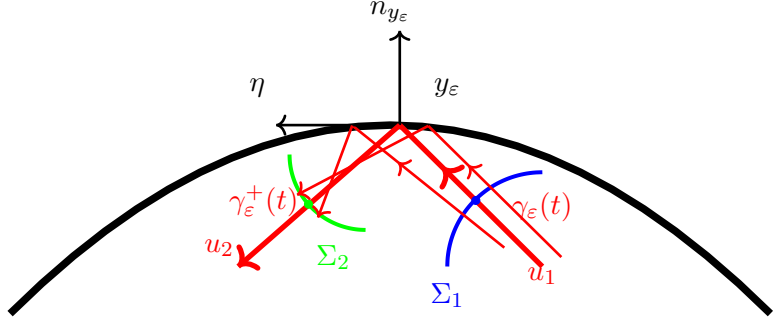
\begin{figure}
	\centering
	\begin{tikzpicture}[scale = 2.5]
	\begin{scope}[decoration={
markings,
mark = at position 0.7 with {\arrow{>}}}
]
	\draw[line width = 1mm, domain = -2:2] plot(\x,1-0.25*\x*\x);
\draw[domain = 90:180, smooth, variable = \x, blue,line width = 1.5pt] plot({0.8+0.5*cos(\x)},{0.25+0.5*sin(\x)});
\draw[red, line width = 2pt, postaction = {decorate}](0.8,0.25)--(0.05,1);
\draw[red, line width = 1pt, postaction = {decorate} ](0.9,0.3)--(0.2,0.996);
\draw[red, line width = 1pt, postaction = {decorate} ](0.6,0.35)--(-0.2,0.998);

\draw[domain = 180:270, smooth, variable = \x, green,line width = 1.5pt] plot({-0.43++0.3+0.4*cos(\x)},{0.58+0.88*0.3+0.4*sin(\x)});
\end{scope}
	\begin{scope}[decoration={
markings,
mark = at position 1 with {\arrow{>}}}
]
\draw[black, line width = 1pt, postaction = {decorate}](0,1)--(-0.6,1);
\draw[black, line width = 1pt, postaction = {decorate}](0.05,1)--(0.05,1.5);
\draw[red, line width = 2pt, postaction = {decorate}](0.05,1)--(-0.8,0.25);
\draw[red, line width = 1pt, postaction = {decorate}](0.2,0.996)--(-0.49,0.63);
\draw[red, line width = 1pt, postaction = {decorate}](-0.2,0.998)--(-0.38,0.51);
\end{scope}
%\node[blue] at (0.5,0.7) {$q$};
\node[red] at (0.8,0.2) {$u_1$};
\node[blue] at (0.3,0.1) {$\Sigma_1$};
\node[black] at (0.3, 1.2) {$y_\varepsilon$};
\node[black] at (-0.7, 1.2) {$\eta$};
\node[green] at (-0.3,0.3) {$\Sigma_2$};
\node[red] at (-0.9,0.35) {$u_2$};
%\node[green] at (-0.2,0.5) {$Q(q)$};
%\node[red, right] at (0.15,0.9) {$r_1(q)$};
%\node[red] at (-0.25,0.9) {$r_2(q)$};
\node[black] at (0,1.6) {$n_{y_\varepsilon}$};
\node[below,red] at (0.8,0.7) {$\gamma_\varepsilon(t)$};
\node[red,left] at (-0.43,0.6) {$\gamma_\varepsilon^+(t)$};
\tkzDefPoint(0.45,0.6){A};
\tkzDrawPoint[blue](A);
\tkzDefPoint(-0.43,0.58){B}
\tkzDrawPoint[green](B);
	\end{tikzpicture}
	\caption{Reflection of wave fronts. The hypersurface $\Sigma_1$ is orthogonal to the variation $\gamma_\varepsilon(t)$.
	The hypersurface $\Sigma_2$ is orthogonal to the reflection of this variation, $\gamma_\varepsilon^+(t)$. \label{fig:SinaiChernovDemonstration}}
\end{figure}
We then have
$\gamma_\varepsilon^+(t)= u_{1,\varepsilon}^+ t + y_\varepsilon$, where $u_{1,\varepsilon}^+ = u_{1,\varepsilon} - 2\inprod{u_{1,\varepsilon}}{n_{y_\varepsilon}}n_{y_\varepsilon}$, and:
\begin{gather*}
\xi^+ = \frac{\partial }{\partial \varepsilon}\mid_{\varepsilon=0} u_{1,\varepsilon}^+= \xi - 2\inprod{\xi}{n_{y}}n_{y} - 2\inprod{u_1}{dG_{y}\eta}n_{y}- \\
-2\inprod{u_1}{n_{y}}dG_{y}\eta.
\end{gather*}
So in this case, (here $u_2=u_1^+$ is the direction of the central ray after reflection) $p_{u_2}J^+(0) = p_{u_2}\eta=p_{n_{y},u_2}\eta$, and $(p_{u_2}J^+)'(0) = p_{u_2}\xi^+$.
Let us compute the last expression more explicitly.
\begin{gather*}
(p_{u_2}J^+)'(0)=p_{u_2}(\xi-2\inprod{\xi}{n_{y}}n_{y}-2\inprod{u_1}{dG_{y}\eta}n_{y}-2\inprod{u_1}{n_{y}}dG_{y}\eta) = \\
=p_{u_2}R^*\xi - 2p_{u_2}(\inprod{u_1}{dG_{y}\eta}n_{y}+\inprod{u_1}{n_{y}}dG_{y}\eta)
\end{gather*}
Since $\xi\perp u_1$, then $R^*\xi\perp u_2$, so $p_{u_2}R^*\xi=R^*\xi$. 
Also, since $u_1$, $u_2$ are reflections about the hyperplane $\set{n_{y}}^\perp$, then $\inprod{u_1}{dG_{y}\eta}=\inprod{u_2}{dG_{y}\eta}$ and $\inprod{u_1}{n_{y}}=-\inprod{u_2}{n_{y}}$.
Hence,
\begin{gather*}
(p_{u_2}J^+)'(0)=R^*\xi+2\inprod{u_2}{n_{y}}p_{u_2}\Big(dG_{y}\eta-\frac{\inprod{u_2}{dG_{y}\eta}}{\inprod{u_2}{n_{y}}}n_{y}\Big).
\end{gather*}
The argument of $p_{u_2}$ in the formula above is a vector which is orthogonal to $u_2$, whose orthogonal projection on $\set{n_{y}}^\perp$ is $dG_{y}\eta$, and since $dG_{y}\eta\in\set{n_{y}}^\perp$ then we can simplify:
\[(p_{u_2}J^+)'(0)=R^*\xi+2\inprod{u_2}{n_{y}}p_{u_2,n_{y}}\inv dG_{y}\eta.\]
Now, as before \eqref{eq:JacobiAndCurvature} gives $(p_{u_2}J^+)'(0)=B^+ p_{u_2}J^+(0) = B^+p_{n_{y},u_2}\eta$.
If we also use \eqref{eq:alphaBetaSinai}, then we get
\[B^+p_{n_{y},u_2}\eta = R^*Bp_{n_{y},u_1}\eta + 2\inprod{u_2}{n_{y}}p_{u_2,n_{y}}\inv dG_{y}\eta.\]
And as a result, we get the required formula:
\begin{gather*}
B^+ = R^*Bp_{n_{y},u_1}p_{n_{y},u_2}\inv + 2 \inprod{u_2}{n_{y}}p_{u_2,n_{y}}\inv dG_{y}p_{n_{y},u_2}\inv = \\
=R^*BR + 2\inprod{u_2}{n_{y}}(p_{n_{y},u_2}\inv)^*dG_{n_{y}}p_{n_{y},u_2}\inv.
\end{gather*}
\end{proof}

\end{appendices}
\bibliography{bibliography}
\bibliographystyle{abbrv}

\end{document}